\documentclass[arxiv]{agn_article}

\usepackage[markexamples=true,markremarks=true]{agn_all}
\usepackage{caption}
\usepackage{multicol}
\usepackage{subcaption}

\addbibresource{bib_voss.bib}
\addbibresource{bib_agn.bib}
\addbibresource{bib_main.bib}

%\graphicspath{{images/}}
\newcommand{\sample}{42}

\renewcommand{\Rn}{\R^n}

\DeclareMathOperator{\arcosh}{arcosh}
\DeclareMathOperator{\adj}{adj}
\newcommand{\lmax}{\lambdamax}
\newcommand{\lmin}{\lambdamin}

\newcommand{\Wmp}{W_{\mathrm{magic}}^+}

\newcommand{\Wiso}{W_{\mathrm{iso}}}
\newcommand{\Wvol}{W_{\mathrm{vol}}}

\newcommand{\Mps}{\mathfrak{M}_+^*}
\newcommand{\Ms}{\mathfrak{M}^*}

\newcommand{\ddr}{\frac{\mathrm{d}}{\mathrm{d}r}}

\newcommand{\casesif}{\caseif}
\DeclareMathOperator{\curl}{curl}
\DeclareMathOperator{\CCurl}{CCurl_{2D}}
\DeclareMathOperator{\curld}{curl_{2D}}
\DeclareMathOperator{\Curld}{Curl_{2D}}

\newcommand{\bary}[1]{\overline{#1}}
\providecommand{\meas}[1]{\abs{#1}}
\providecommand{\sob}[2]{W^{#1,#2}}

\providecommand{\dnu}{\intd\nu}

\usepackage{pifont}% http://ctan.org/pkg/pifont

\providecommand{\citet}[2][]{\citeauthor{#2} \cite[#1]{#2}}

\defineaffiliation{tud}{
	Institute of Numerical Mathematics,
	Technical University of Dresden,
	Zellescher Weg 12--14,
	01069 Dresden, Germany
}

\defineauthor{sander}{Oliver~Sander}{tud}{oliver.sander@tu-dresden.de}

\defineaffiliation{eth}{%
	Mechanics and Materials Lab,
	Department of Mechanical and Process Engineering,
	ETH Zürich,
	Leonhardstr. 21,
	8092 Zürich, Switzerland
}

\defineauthor{kochmann}{Dennis~M.~Kochmann}{eth}{dmk@ethz.ch}

\defineaffiliation{delft}{%
	Department of Materials Science and Engineering,
	Delft University of Technology,
	Mekelweg 2, 2628CD Delft, Netherlands
}

\defineauthor{sid}{Siddhant~Kumar}{delft}{Sid.Kumar@tudelft.nl}

\parindent0em
\parskip0.5em

\tikzset{
    state/.style={
           rectangle,
           rounded corners,
           draw=black, very thick,
           minimum height=2em,
           inner sep=6pt,
           text centered,
           }
}
\begin{document}
\title{Numerical approaches for investigating quasiconvexity in the context of Morrey's conjecture}
\date{\today}
\knownauthors[voss_d]{voss_d,martin,sander,sid,kochmann,neff}
\maketitle
\vspace{-1.5em}
\begin{abstract}
\noindent%
Deciding whether a given function is quasiconvex is generally a difficult task. Here, we discuss a number of numerical approaches that can be used in the search for a counterexample to the quasiconvexity of a given function $W$. We will demonstrate these methods using the planar isotropic rank-one convex function
\[
	W_{\rm magic}^+(F)=\frac{\lambdamax}{\lambdamin}-\log\frac{\lambdamax}{\lambdamin}+\log\det F=\frac{\lambdamax}{\lambdamin}+2\.\log\lambdamin\,,
\]
	where $\lambdamax\geq\lambdamin$ are the singular values of $F$, as our main example. In a previous contribution, we have shown that quasiconvexity of this function would imply quasiconvexity for all rank-one convex isotropic planar energies $W\col\GLp(2)\to\R$ with an additive volumetric-isochoric split of the form
\[
	W(F)=W_{\rm iso}(F)+W_{\rm vol}(\det F)=\widetilde W_{\rm iso}\bigg(\frac{F}{\sqrt{\det F}}\bigg)+W_{\rm vol}(\det F)
\]
with a concave volumetric part. This example is therefore of particular interest with regard to Morrey's open question whether or not rank-one convexity implies quasiconvexity in the planar case.
\end{abstract}

{\textbf{Key words:} nonlinear elasticity, hyperelasticity, planar elasticity, rank-one convexity, quasiconvexity, ellipticity, isotropy, volumetric-isochoric split, finite elements, physics-informed neural networks,
\\[.65em]
\noindent\textbf{AMS 2010 subject classification:
	74B20, % nonlinear elasticity
	74A10, % theory of constitutive functions
	26B25  % generalized convexity of functions with multiple variables
}

{\parskip=-0.4mm \tableofcontents}

%
%
%
%
%%%%%%%%%%%%%%%%%%%%%%%%%%%%%%%%%%%%%%%%%%%%
\section{Introduction}
The question whether or not rank-one convexity implies quasiconvexity in the planar case is considered one of the major open problems in the calculus of variations \cite{pedregal2019rank,casadio1993algebraic,parry1995planar,kawohl1988quasiconvexity}. Morrey \cite{morrey1952quasi,morrey2009multiple} conjectured that this is not the case in general. Of course, in order to demonstrate that these convexity properties are indeed distinct, it is sufficient to identify a single rank-one convex function which is not quasiconvex, as was done by Sverak in the non-planar case \cite{sverak1992rank}. While numerous viable criteria are known for rank-one convexity, it remains highly difficult to explicitly show that a function is not quasiconvex. It is therefore common to apply numerics to the problem \cite{dacorogna1998some,pedregal1996some,bartels2004effective}.

In this article, we discuss different numerical approaches for demonstrating the non-quasiconvexity of a given function. We will primarily apply our methods to a single example, which has been the subject of a previous article \cite{agn_voss2021morrey}: The planar isotropic energy function $\Wmp$, which is defined via
\begin{equation}
\label{eq:wmpfirst}
	\Wmp(F)=\frac{\lambdamax}{\lambdamin}-\log\frac{\lambdamax}{\lambdamin}+\log\det F=\frac{\lambdamax}{\lambdamin}+2\.\log\lambdamin
\end{equation}
with $\lmax\geq\lmin>0$ as the ordered singular values of the deformation gradient $F=\nabla\varphi\in\GLp(2)$. While it has already been shown that this function is rank-one convex but nowhere strictly elliptic and not polyconvex \cite{agn_voss2021morrey}, it remains open whether or not $\Wmp$ is quasiconvex.

The energy candidate \eqref{eq:wmpfirst} is very compelling: It emerged from the investigation of planar isotropic elastic energies with a so-called additive volumetric-isochoric split, i.e.\ of energies that can be written as the sum of an isochoric part depending only on the product $\frac\lmax\lmin$ and a volumetric part depending only on $\lmax\lmin$. Details of this structure and its characteristics are discussed in Section \ref{sec:vol-iso}. It can be shown \cite{agn_voss2021morrey} that $\Wmp(F)$ is a limit case in the investigation of \enquote{least rank-one convex} candidates in the family of energy functions with an additive volumetric-isochoric split. In this specific setting, it was demonstrated that the question of quasiconvexity for $\Wmp(F)$ also determines whether or not any other rank-one convex function with an additive volumetric-isochoric split and a concave volumetric part is quasiconvex.

This work is primarily meant to serve as a guideline towards numerical investigations in the context of Morrey's conjecture. In particular, while the general methods described here are mostly applicable to a large class of energy functions,\footnote{We assume throughout that the energy function is sufficiently regular, cf.\ \cite{ball2000regularity,conti2005rank}.} we utilize a number of specific invariance properties exhibited by the particular example $\Wmp$, as discussed in Section \ref{sec:functionProperties}. Exploiting such properties can vastly improve the efficiency of numerical approaches and, when investigating other types of functions, it should be kept in mind that similar invariances could be identified and used to simplify the more general numerical algorithms for finding counterexamples to quasiconvexity.

The methods we describe reliably find such counterexamples for functions which are known to be non-elliptic and therefore non-quasiconvex. For the rank-one convex energy candidate $\Wmp$, a number of microstructures with the same energy level as the homogeneous deformation were (re-)discovered. However, we were unable to demonstrate the non-quasiconvexity of $\Wmp$ with any numerical approach. Although inconclusive, these numerical results certainly suggest that the function $\Wmp$ is in fact quasiconvex and therefore not suited for answering Morrey's conjecture.
%
%
%
%
%%%%%%%%%%%%%%%%%%%%%%%%%%%%%%%%%%%%%%%%%%%%%%%%%%%%%%%%%%%%
\subsection{Convexity properties of energy functions}\label{sec:convexity}

We start by recalling the classical definitions of generalized convexity properties \cite{Dacorogna08,silhavy1997mechanics,agn_schroder2010poly}.
\begin{definition}\label{def:quasiconvexity}
	The energy function $W\col\Rnn\to\R\cup\{+\infty\}$ is \emph{quasiconvex} if and only if
	\begin{equation}
		\int_\Omega W(F_0+\nabla\vartheta(x))\,\dx\geq\int_\Omega W(F_0)\,\dx=\abs\Omega\cdot W(F_0)\qquad\text{for all}\quad F_0\in\Rnn,\;\vartheta\in W_0^{1,\infty}(\Omega;\Rn)\label{eq:quasiconvexity}
	\end{equation}
	for any domain $\Omega\subset\R^n$ with Lebesgue measure $\abs{\Omega}$. The energy function is \emph{strictly quasiconvex} if the inequality in \eqref{eq:quasiconvexity} is strict for all $\vartheta\neq0$.
\end{definition}

While quasiconvexity, together with suitable growth conditions, is sufficient to ensure weak lower semi-continuity of the energy functional, it has the disadvantage of being notoriously difficult to prove or disprove directly. This led to the introduction of various sufficient and necessary conditions for quasiconvexity. 
\begin{definition}\label{def:polyconvexity}[\cite{ball1976convexity}]
	The energy function $W\col\Rnn\to\R\cup\{+\infty\}$ is \emph{polyconvex} if and only if there exists a convex function $P\col\R^{m(n)}\to\R\cup\{+\infty\}$ with
	\begin{equation}
		W(F)=P\bigl(F,\adj_2(F),\cdots,\adj_n(F)\bigr)\qquad\text{for all}\quad F\in\Rnn\,,\label{eq:polyconvex}
	\end{equation}
	with $\adj_i(F)\in\Rnn$ as the matrix of the determinants of all $i\times i$--minors of $F$ and $m(n) \colonequals \sum_{i=1}^n \binom{n}{i}^2$. The energy function is \emph{strictly polyconvex} if such a $P$ exists which is strictly convex.	
\end{definition}
For the planar case $n=2$, an energy $W\col\R^{2\times 2}\to\R\cup\{+\infty\}$ is polyconvex if and only if there exists a convex mapping $P\col\R^{2\times 2}\times\R\cong\R^5\to\R\cup\{+\infty\}$ with
\[
	W(F)=P(F,\det F)\qquad\text{for all}\quad F\in\R^{2\times 2}\,.
\]

Whereas polyconvexity provides a sufficient criterion for quasiconvexity, a necessary condition is given by the rank-one convexity of a function.
\begin{definition}\label{def:rank1convexity}
	The energy function $W\col\Rnn\to\R\cup\{+\infty\}$ is \emph{rank-one convex} if for $F_1,F_2\in\Rnn$,
	\begin{equation}
		W(t\.F_1+(1-t)\.F_2)\leq t\.W(F_1)+(1-t)\.W(F_2)\quad\text{for all }\;t\in(0,1)\quad\text{if }\;\rank(F_1-F_2)=1\,.\label{eq:rankOneConvexity}
	\end{equation}
	If the energy function is twice differentiable, rank-one convexity is equivalent to the \emph{Legendre-Hadamard ellipticity condition}
	\begin{equation}
		D^2W(F).(\xi\otimes\eta,\xi\otimes\eta)\geq 0 \qquad\text{for all}\quad F\in\Rnn,\;\xi,\eta\in\Rn,\label{LegendreHadamardEllipticity}
	\end{equation}
	which expresses the ellipticity of the Euler-Lagrange equation $\Div DW(\nabla\varphi)=0$ corresponding to the variational problem
	\begin{equation}
		I(\varphi)=\int_\Omega W(\nabla\varphi(x))\,\dx\to\min\,.\label{eq:minimizationProblem}
	\end{equation}
	The energy function is \emph{strictly rank-one convex} if inequality \eqref{eq:rankOneConvexity} is strict.
\end{definition}

Overall, for any $W\col\Rnn\to\R\cup\{+\infty\}$ we have the well known hierarchy \cite{ball1976convexity,ball1987does,Dacorogna08}
\begin{equation}
	\text{polyconvexity}\quad\implies\quad\text{quasiconvexity}\quad\implies\quad\text{rank-one convexity}\,,
\end{equation}
with none of the reverse implications holding in general for $n\geq3$. The remaining question whether rank-one convexity implies quasiconvexity for $n=2$ is known as Morrey's problem.

In continuum mechanics and related applications, energy functions are often more naturally defined on the group $\GLpn$ instead of $\Rnn$ since a deformation gradient $F$ with non-positive determinant would imply local self-intersection. For such functions we introduce:
\begin{definition}
	A function $W\col\GLpn\to\R$ is called quasi-/poly-/rank-one convex if the function
	\[
		\What\col\Rnn\to\R\cup\{+\infty\}\,,\qquad\What(F)=\begin{cases}
			W(F) &\casesif F\in\GLpn\,,\\
			+\infty &\casesif F\notin\GLpn\,,
		\end{cases}
	\]
is quasi-/poly-/rank-one convex.
\end{definition}
%
%
%
%
%
%
%%%%%%%%%%%%%%%%%%%%%%%%%%%%%%%%%%%%%%%%%%%%%%%%%%%%%%%%%%%%%%%%%
\subsection{Periodic boundary conditions}\label{sec:periodicBoundary}

The definition of quasiconvexity \eqref{eq:quasiconvexity} can be reformulated in terms of deformations with periodic boundary conditions. As will be demonstrated in Section \ref{sec:periodic}, this can be helpful to find periodic microstructures numerically. In the context of planar elasticity, periodic boundary conditions have the form
\begin{equation}
	\varphi(x)=F_0\.x+\vartheta_\#(x)
\end{equation}
with a homogeneous deformation $F_0\in\GLp(2)$ and a periodic superposition $\vartheta_\#\in W_{\textnormal{per}}^{1,\infty}(\Omega)$. The domain $\Omega$ has to be in such a geometrical shape that $\R^2$ can be covered by periodic replications of $\Omega$ and the superposition $\vartheta_\#$ must be $\Omega$-periodic, cf.\ Figure \ref{fig:periodicExample}. Seemingly, periodic boundary conditions are a more general concept than Dirichlet boundary conditions with test functions $\vartheta\subset W_0^{1,\infty}(\Omega;\Rn)$ because they also allow for changes on the boundary itself.\footnote{While a periodic superposition $\vartheta_\#\in W_{\textnormal{per}}^{1,\infty}([a,b])$ allows for more modifications of the homogeneous deformation $F_0\.x$ than a test function $\vartheta\in W_0^{1,\infty}([a,b])$ , the average rate of change of $\varphi(x)=F_0\.x+\vartheta_\#(x)$ remains constant, i.e.\
	\[
	\frac{1}{b-a}\int_a^bF_0+\vartheta_\#'(x)\,\dx=\frac{F_0\.b+\vartheta_\#(b)-F_0\.a-\vartheta_\#(a)}{b-a}=F_0\.\frac{b-a}{b-a}=F_0\,.
	\]}

\begin{figure}[h!]
	\centering
	\includegraphics[width=0.493\textwidth]{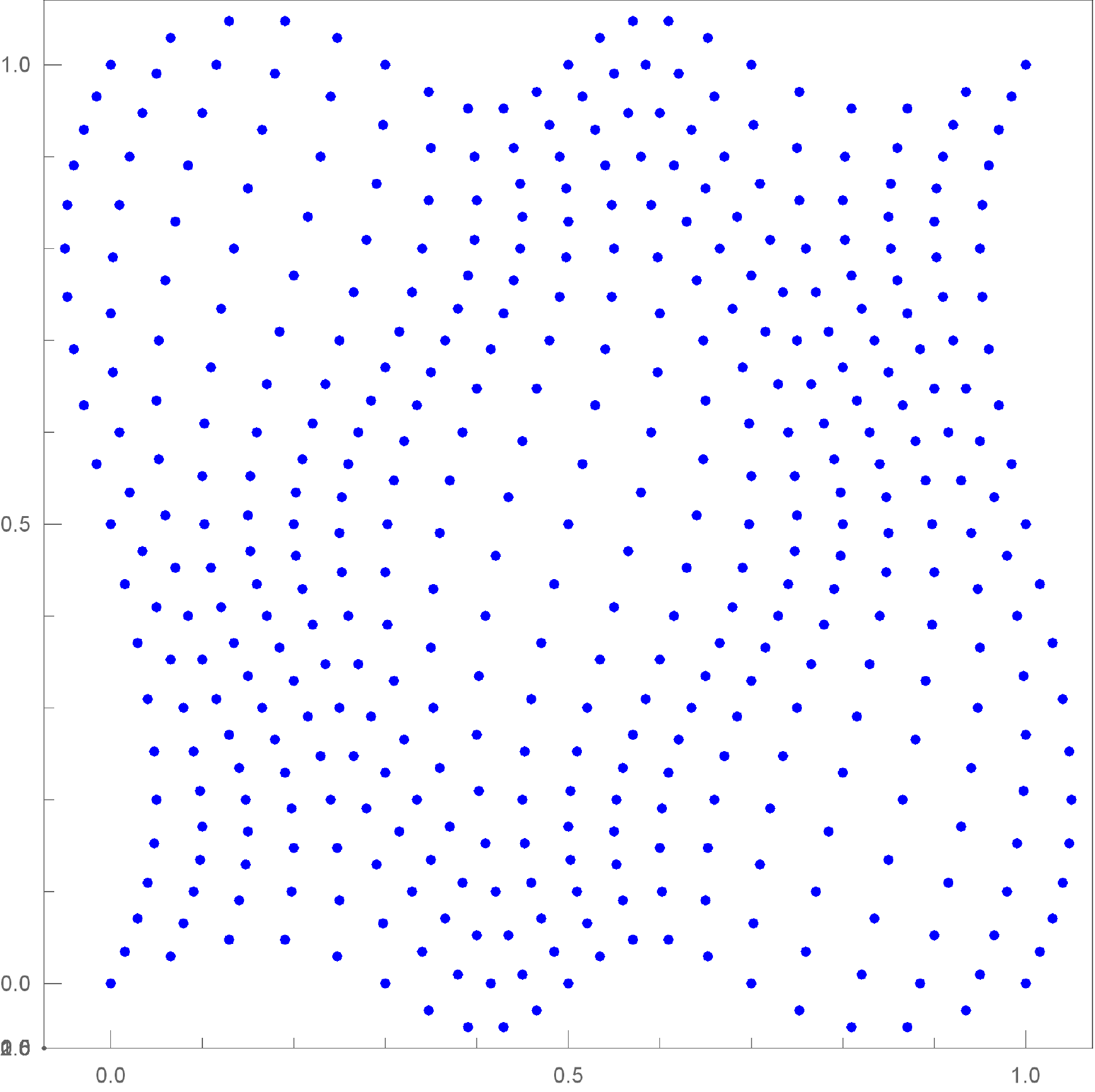}\hfill
	\includegraphics[width=0.49\textwidth]{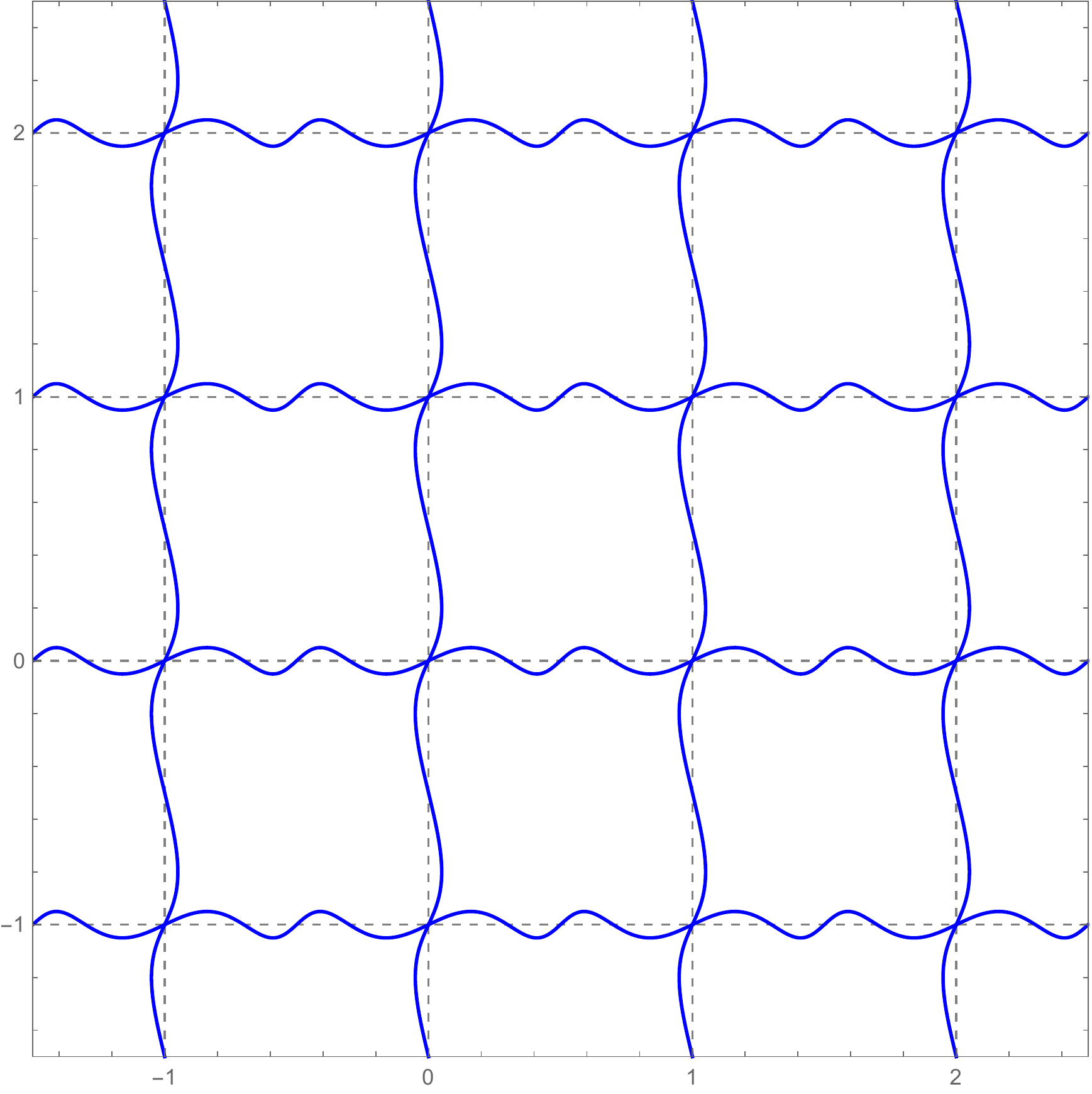}
	\caption{Example of a periodic deformation on the domain $\Omega=[0,1]^2$ which covers $\R^2$ by periodic replication.}\label{fig:periodicExample}
\end{figure}

However, for the notion of quasiconvexity, the two can be used interchangeably.
\begin{proposition}[{\cite[Proposition 5.13]{Dacorogna08}}]\label{def:quasiconvexityPeriodic}
	An energy function $W\col\Rnn\to\R$ is \emph{quasiconvex} if and only if
	\begin{equation}
		\int_\Omega W(F_0+\nabla\vartheta_\#(x))\,\dx\geq\int_\Omega W(F_0)\,\dx=\abs\Omega\cdot W(F_0)\qquad\text{for all}\quad F_0\in\Rnn,\;\vartheta_\#\in W_{\textnormal{per}}^{1,\infty}(\Omega)\label{eq:quasiconvexityPeriodic}
	\end{equation}
	for any domain $\Omega\subset\R^n$ with Lebesgue measure $\abs{\Omega}$ such tht $\R^2$ can be covered by periodic replications of $\Omega$. The energy function is \emph{strictly quasiconvex} if the inequality in \eqref{eq:quasiconvexityPeriodic} is strict for all $\vartheta\neq0$.
\end{proposition}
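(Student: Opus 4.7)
My plan is to prove the two implications separately. The direction ``\eqref{eq:quasiconvexityPeriodic}~$\Rightarrow$~\eqref{eq:quasiconvexity}'' is immediate: any $\vartheta\in W_0^{1,\infty}(\Omega;\Rn)$, extended by zero to $\R^n$, is automatically $\Omega$-periodic because it vanishes on $\partial\Omega$, so every admissible test function for Definition~\ref{def:quasiconvexity} is also admissible as a periodic superposition in \eqref{eq:quasiconvexityPeriodic}, and the inequality transfers directly.

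For the converse, given $F_0\in\Rnn$ and $\vartheta_\#\in W_{\textnormal{per}}^{1,\infty}(\Omega)$, I would approximate $\vartheta_\#$ by zero-boundary competitors via a standard rescaling-and-cutoff construction of the type familiar from homogenization. For each large integer $k$ set $\psi_k(x)\colonequals\tfrac{1}{k}\vartheta_\#(k x)$, which is $\tfrac{1}{k}\Omega$-periodic with $\nabla\psi_k(x)=\nabla\vartheta_\#(k x)$. The change of variables $y=k x$ combined with the tiling hypothesis yields
\[
  \int_\Omega W\bigl(F_0+\nabla\psi_k(x)\bigr)\,\dx \;=\; \int_\Omega W\bigl(F_0+\nabla\vartheta_\#(y)\bigr)\,\mathrm dy \;+\; O\bigl(\tfrac{1}{k}\bigr) \quad \text{as}\; k\to\infty,
\]
the error stemming from the mismatch between $k\Omega$ and an integer union of translates of $\Omega$, a set of relative measure $O(1/k)$. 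Next I multiply by a smooth cutoff $\eta_k\col\overline\Omega\to[0,1]$ with $\eta_k\equiv 1$ on an interior set $\Omega_k$ satisfying $|\Omega\setminus\Omega_k|\to 0$, $\eta_k|_{\partial\Omega}=0$, and $\|\nabla\eta_k\|_\infty=O(k)$, so that $\vartheta_k\colonequals\eta_k\psi_k\in W_0^{1,\infty}(\Omega;\Rn)$ is admissible in \eqref{eq:quasiconvexity}.

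The decisive estimate is that $\nabla\vartheta_k=\eta_k\nabla\psi_k+\psi_k\otimes\nabla\eta_k$ coincides with $\nabla\psi_k$ on $\Omega_k$ and stays uniformly bounded on the boundary layer $\Omega\setminus\Omega_k$, since $\|\nabla\psi_k\|_\infty\leq\|\nabla\vartheta_\#\|_\infty$ and $\|\psi_k\|_\infty\.\|\nabla\eta_k\|_\infty=O(1/k)\.O(k)=O(1)$. Hence $\nabla\vartheta_k$ takes values in a fixed compact matrix set on which $W$ is bounded by continuity, so the boundary-layer contribution to $\int_\Omega W(F_0+\nabla\vartheta_k)\,\dx$ vanishes with $|\Omega\setminus\Omega_k|$. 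Applying \eqref{eq:quasiconvexity} to $\vartheta_k$ and sending $k\to\infty$ then delivers \eqref{eq:quasiconvexityPeriodic}. The main obstacle is the tile geometry: for $\Omega=[0,1]^n$ the rescaling identity is exact, while for general tiles one must verify that the discrepancy between $k\Omega$ and a union of unit translates of $\Omega$ is of lower-order measure, which is precisely the content of the hypothesis that $\R^2$ can be covered by periodic replications of $\Omega$.
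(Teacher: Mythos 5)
Your proposal is correct, but it is more self-contained than the paper's own proof, which only establishes the easy implication via the inclusion $W_0^{1,\infty}(\Omega;\Rn)\subset W_{\textnormal{per}}^{1,\infty}(\Omega)$ (exactly your first paragraph) and then simply cites Dacorogna for the converse. What you supply for that converse -- periodic extension, the rescaling $\psi_k(x)=\tfrac1k\vartheta_\#(kx)$, a boundary-layer cutoff with $\|\nabla\eta_k\|_\infty=O(k)$, uniform boundedness of $\nabla\vartheta_k$, and passage to the limit in the quasiconvexity inequality -- is essentially the standard argument of the cited reference, so the mathematical route is the classical one; what your version buys is that the proposition no longer rests on an external citation. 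Two small points deserve to be made explicit if you write this out in full: first, the boundedness of $W$ on the compact set of gradient values uses continuity of $W$, which is not assumed but follows since a finite-valued quasiconvex function is rank-one (hence separately) convex and therefore locally Lipschitz (or one can invoke the paper's standing regularity assumption on $W$); second, the $O(1/k)$ mismatch between $k\Omega$ and a union of lattice translates of $\Omega$ is not literally "the content" of the tiling hypothesis -- it additionally uses mild regularity of $\partial\Omega$ (finite perimeter or Lipschitz boundary), which holds trivially for the unit square and for any tile one would use in practice; note also that any $o(1)$ bound would suffice for the limit passage.
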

\begin{proof}
	Inequality \eqref{eq:quasiconvexityPeriodic} directly implies quasiconvexity, because $W_0^{1,\infty}(\Omega)\subset W_{\textnormal{per}}^{1,\infty}(\Omega)$. For the reverse direction see \cite[p.~173]{Dacorogna08}.
\end{proof}
In the context of planar elasticity, it is sufficient to consider $\Omega=[0,1]^2$, and without loss of generality we can assume $\vartheta_\#(0,0)=(0,0)$. The periodic replication of $\Omega$ to cover $\R^2$ implies that the values of $\vartheta_\#$ coincide on opposite edges, i.e.\ every point at the boundary belongs to two separate unit squares (four at the corners of the square). Thus we can write $\Omega$-periodicity as
\begin{align}
	\vartheta_\#(x,0)&=\vartheta_\#(x,1)\qquad\text{and}\qquad\nabla\vartheta_\#(x,0)=\nabla\vartheta_\#(x,1)\qquad\text{for all }x\in[0,1]\,,\notag\\
	\vartheta_\#(0,y)&=\vartheta_\#(1,y)\qquad\text{and}\qquad\nabla\vartheta_\#(0,y)=\nabla\vartheta_\#(1,y)\qquad\text{for all }y\in[0,1]\,;\label{eq:periodicBounderayConditions}
\end{align}
in particular,
\[
\vartheta_\#(0,0)=\vartheta_\#(1,0)=\vartheta_\#(0,1)=\vartheta_\#(1,1)=(0,0)\,.
\]
%
%
%
%
%
%
%%%%%%%%%%%%%%%%%%%%%%%%%%%%%%%%%%%%%%%%%%%%%%%%%%%%%%%%%%%%%%%%%
\subsection{Previous results related to Morrey's conjecture}

Morrey's conjecture has long been considered one of the most important open questions in the calculus of variations, and the remaining problem of the planar case has been the subject of extensive research.

The Dacorogna-Marcellini \cite{dacorogna1988counterexample} function $W\col\R^{2\times 2}\to\R$ with
\begin{align}
	W(F)=\norm{F}^2\left(\norm{F}^2-\gamma\.\det F\right)=(\lambdahat_1^2+\lambdahat_2^2)(\lambdahat_1^2+\lambdahat_2^2-\gamma\.\lambdahat_1\lambdahat_2)\,,\quad\gamma\in\R
\end{align}
is a homogeneous polynomial of degree four; here, $\lambdahat_i$ denote the signed singular values\footnote{Note that $\det F=\lambda_1\lambda_2$ if and only if $\det F > 0$, i.e.\ if $F\in\GLp(2)$. Expressing $\det F$ for all $F\in\R^{2\times 2}$ requires the signed singular values $\lambdahat_1\colonequals\lambda_1$, $\lambdahat_2\colonequals\rm{sign}(\det F)\.\lambda_2$.} of $F$. It has been shown \cite{dacorogna1988counterexample} that the function is rank-one convex if $\gamma\in[0,\frac{4}{\sqrt{3}}\approx 2.309]$, but polyconvex only if $\gamma\leq 2$. This result can be extended to the more general class
\begin{align}
	W(F)=\norm{F}^{2\alpha}\left(\norm F^2-\gamma\.\det F\right)\,,\qquad\alpha\geq 1\,,\quad\gamma\in\R\,.
\end{align}
At present, it is not known whether this expression is quasiconvex for $\gamma\in(2,\frac{4}{\sqrt{3}}]$. However, extensive numerical calculations suggest that it is quasiconvex \cite{dacorogna1998some,dacorogna1996remarks}. In the same works, Dacorogna et al.\ study several rank-one convex functions, including an example by Ball \cite{Ball84b,dacorogna1990some}:
\begin{align}
	W(F)=\norm{F}^{4\alpha}-2^{2\alpha-1}-\gamma\.(\det F)^{2\alpha}\,,\qquad\alpha\geq\frac14\,,\quad\gamma\geq0\,.
\end{align}
Together with an example by Aubert \cite{aubert1987counterexample}, the Dacorogna-Marcellini function has been the first example given in the literature of a planar function which is rank-one convex but not polyconvex. 

Many planar functions used in the context of Morrey's conjecture have the structure $W(F)=g(\norm{F}^2,\det F)$ for which additional numerical optimization is available \cite{gremaud1995numerical,grabovsky2018rank} or are composed of polynomials up to the degree four \cite{gutierrez2007optimization,bandeira2009finding}.

Guerra and da Costa \cite{guerra2020numerical} recently employed a systematic numerical approach towards the question of Morrey's conjecture. Their findings suggest that in the planar case, rank-one convexity implies $N$-wave quasiconvexity for $N\leq5$. Since any function which is $N$-wave quasiconvex for all $N\in\N$ is quasiconvex \cite[Proposition 3.6]{guerra2020numerical} (cf.~\cite{sebestyen2017laminates}), these results provide some evidence for the conjecture that rank-one convexity indeed implies quasiconvexity for planar energies.

Previous applications of machine learning in nonlinear elasticity, as we consider in Section \ref{sec:numericSid}, have mostly focused on the energy function itself \cite{fernandez2021anisotropic,agn_klein2021neural}. The application to deformation functions presented here is based on the concept of physics-informed neural networks, which have recently been employed for finding approximate solutions to various partial differential equations \cite{raissi2019physics,KarniadakisEtAl2021}.
%
%
%
%
%
%%%%%%%%%%%%%%%%%%%%%%%%%%%%%%%%%%%%%%%%%%%%%%%%%%%%%%%%%%%%%%%%%
\section{Exploitable properties of functions}\label{sec:functionProperties}

Before employing numerical methods to investigate whether a function is quasiconvex, it is generally useful to identify invariances and similar properties of the specific function that may allow to improve the efficiency of the numerical approach. In the following, we will focus on the particular energy function $\Wmp$, which exhibits three important properties that can be exploited numerically: isotropy, scaling invariance and the specific form of a volumetric-isochoric split.
%
%
%
%%%%%%%%%%%%%%%%%%%%%%%%%%%%%%%%%%%%%%%%%%%%%%%%%%%%%%%%%%%%%%%%%
\subsection{The volumetric-isochoric split}\label{sec:vol-iso}

The energy $\Wmp$ emerged from the investigation of the family of planar isotropic energies $W\col\GLp(2)\to\R$ with an additive volumetric-isochoric split
\begin{align}
	W(F)=\Wiso(F)+\Wvol(\det F)=\underbrace{\widetilde W_\textrm{iso}\bigg(\frac{F}{\sqrt{\det F}}\bigg)}_{\mathclap{\text{conformally invariant}}}+\underbrace{\Wvol(\det F)}_{\hspace{2cm}\mathclap{\text{purely area-dependent in 2D}}}.\label{eq:volIsoSplit}
\end{align}
We will motivate both the additional structure one can achieve with this type of energy functions as well as the candidate $\Wmp$ itself. By \cite[Lemma 3.1]{agn_martin2015rank}, energies of the type \eqref{eq:volIsoSplit} can be written as
\begin{align}
	W(F)=h\bigg(\frac{\lambda_1}{\lambda_2}\bigg)+f(\lambda_1\lambda_2)\,,\qquad h,f\col\Rp\to\R\,,\quad h(t)=h\bigg(\frac{1}{t}\bigg)\qquad\text{for all}\quad t\in(0,\infty)\,,
\end{align}
where $\lambda_1,\lambda_2>0$ denote the singular values of $F$ and $h,f$ are real-valued functions. In nonlinear elasticity theory, energy functions with an additive volumetric-isochoric split are widely used, primarily to model the behaviour of slightly compressible materials \cite{Ciarlet1988,agn_hartmann2003polyconvexity,ogden1978nearly,agn_neff2015geometry}.

For a further representation of $W$, we introduce the \emph{(nonlinear) distortion function} or \emph{outer distortion}
\begin{align}
	\K\col\GLp(2)\to\R\,,\qquad\K(F)\colonequals\frac{1}{2}\.\frac{\norm F^2}{\det F}=\frac{\lambda_1^2+\lambda_2^2}{2\.\lambda_1\lambda_2}=\frac{1}{2}\left(\frac{\lambda_1}{\lambda_2}+\frac{\lambda_2}{\lambda_1}\right),\label{eq:distortion}
\end{align}
where $\norm{\,.\,}$ denotes the Frobenius matrix norm with $\norm{F}^2\colonequals\sum_{i,j=1}^2 F_{ij}^2$. The distortion function $\K$ is \emph{conformally invariant}, i.e.\
\begin{equation}
	\K(aR\.\grad\varphi)=\K(\grad\varphi\.aR)=\K(\grad\varphi)\qquad \text{for all}\quad a>0\,,\;R\in\SO(2)\,.
\end{equation}
Additionally, we consider the \emph{linear distortion} or \emph{(large) dilatation}
\begin{align}
	K(F)\col\GLp(2)\to\R\,,\qquad K(F)=\frac{\opnorm{F}^2}{\det F}=\frac{\lambdamax^2}{\lambdamin\lambdamax}=\frac{\lambdamax}{\lambdamin}\,,\label{eq:dilatation}
\end{align}
where $\opnorm{F}=\sup_{\norm{\xi}=1}\norm{F\.\xi}_{\R^2}=\lmax$ denotes the operator norm (i.e.\ the largest singular value) of $F$.

We can then express every conformally invariant energy $W$ on $\GLp(2)$ as $W(F)=\hhat(K(F))=\Psi(\K(F))$ with $\hhat\,,\Psi\col[1,\infty)\to\R$ \cite{agn_martin2015rank}. Note that in general,
\begin{align}
	\K(F)=\frac{1}{2}\left(K+\frac{1}{K}\right)\qquad\iff\qquad K(F)=\K(F)+\sqrt{\K(F)^2-1}=e^{\arcosh\K(F)}.
\end{align}

In a previous paper \cite{agn_voss2021morrey} we motivated the reduction to the case $f(\det F)=c\.\log\det F$ for arbitrary additive volumetric-isochoric split energies with a newly developed rank-one convexity criterion \cite{agn_voss2019volIsLog}. More specifically, we showed that if there exists a rank-one convex energy function with an additive volumetric-isochoric split that is not quasiconvex, then we can find such a function in the set
\begin{equation}
	\Ms\colonequals\left\{W(F)=h\biggl(\frac{\lambda_1}{\lambda_2}\biggr)+c\.\log(\lambda_1\lambda_2)\;\big|\;h\col(1,\infty)\to\R\,,\;c\in\R\right\}
\end{equation}
as well. It is therefore sufficient to consider $\Ms$ instead of the general class of volumetric-isochoric split energies when discussing Morrey's conjecture.
%
%
%
%
%
%%%%%%%%%%%%%%%%%%%%%%%%%%%%%%%%%%%%%%%%%%%%%%%%%%%%%%%%%%%%%%%%%
\subsection{Scaling invariance}\label{sec:scalingInvariance}

In general an arbitrary energy $W\in\Ms$ is neither simply scaling invariant, i.e.\ $W(\alpha\.F)\neq W(F)$ for all $\alpha\in\R$ and $F\in\GLp(2)$, nor tension-compression symmetric, i.e.\ $W(F)\neq W(F\inv)$ for all $F\in\GLp(2)$. However, $\Ms$ provides additional invariance properties that hold for rank-one convexity and quasiconvexity which we will use to simplify numerical calculations in the following sections by reducing the number of deformation gradients $F\in\GLp(2)$ that we must consider.
\begin{lemma}\label{lem:rankoneScalingInvariance}
	Let $W\in\Ms$ be twice differentiable. Then the ellipticity domain of $W$ is scaling invariant, i.e.\ a cone: if $W$ is elliptic at $F_0\in\GLp(2)$, then $W$ is elliptic at $\alpha\.F_0$ for every $\alpha>0$.
\end{lemma}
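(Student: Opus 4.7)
The plan is to exploit the very explicit structure of the class $\Ms$: every $W \in \Ms$ can be written as $W(F)=h(\lambda_1/\lambda_2)+c\log(\lambda_1\lambda_2)$, and under the scaling $F\mapsto\alpha F$ the singular values scale as $\lambda_i\mapsto\alpha\lambda_i$, so the ratio $\lambda_1/\lambda_2$ is unchanged while $\det F$ is multiplied by $\alpha^2$. Substituting this into the defining expression yields the key algebraic identity
\[
    W(\alpha F) \;=\; h\!\left(\tfrac{\lambda_1}{\lambda_2}\right) + c\log(\alpha^2\lambda_1\lambda_2) \;=\; W(F) + 2c\log\alpha,
\]
i.e.\ $F\mapsto W(\alpha F)$ and $F\mapsto W(F)$ differ by a constant that is independent of $F$.

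Next I would exploit this identity by differentiating twice in $F$. On the one hand, the additive constant $2c\log\alpha$ contributes nothing to $D^2$, so the Hessian of $F\mapsto W(\alpha F)$ at $F$ coincides with $D^2W(F)$. On the other hand, since the map $L_\alpha(F)=\alpha F$ is linear, the chain rule gives
\[
    D^2\bigl(W\circ L_\alpha\bigr)(F).(H,H) \;=\; \alpha^2\, D^2W(\alpha F).(H,H)
\]
for any $H\in\R^{2\times 2}$. Combining both expressions yields the pointwise identity
\[
    \alpha^2\, D^2W(\alpha F).(H,H) \;=\; D^2W(F).(H,H) \qquad\text{for all }H\in\R^{2\times 2}.
\]

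Restricting to rank-one directions $H=\xi\otimes\eta$ and using $\alpha>0$, the Legendre--Hadamard inequality \eqref{LegendreHadamardEllipticity} at $F_0$ is equivalent to the same inequality at $\alpha F_0$. Hence the ellipticity domain of $W$ is invariant under positive scaling, which is exactly the claim.

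I do not expect any real obstacle here: the entire argument rests on the elementary observation that the volumetric term $c\log(\lambda_1\lambda_2)$ depends on the determinant only logarithmically, so that scaling the argument shifts the energy by a constant. The only mild care needed is to note that $h$ need not be differentiable at the boundary $\lambda_1=\lambda_2$ of the unordered singular-value parametrisation, but since $W$ is assumed twice differentiable on $\GLp(2)$ as a function of $F$ the chain-rule step is justified without having to inspect $h$ and $f$ separately.
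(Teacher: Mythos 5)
Your proof is correct. The key identity $W(\alpha F)=W(F)+2c\log\alpha$ for all $F\in\GLp(2)$ holds because the singular values of $\alpha F$ are $\alpha\lambda_1,\alpha\lambda_2$, and since $\GLp(2)$ is open you may differentiate this identity twice in $F$; combined with the chain rule for the linear map $F\mapsto\alpha F$ this gives $\alpha^2\,D^2W(\alpha F).(H,H)=D^2W(F).(H,H)$, and restricting to $H=\xi\otimes\eta$ yields the claim. Your route differs from the paper's in an instructive way: the paper splits $W=\Wiso+\Wvol$, treats the isochoric part via its conformal invariance $\Wiso(\alpha F)=\Wiso(F)$ (essentially your rescaling argument applied to that summand), and then handles $\Wvol(F)=c\log\det F$ by an explicit computation of its second derivative involving $\Cof(\alpha F)$, using that $\det$ is affine along rank-one lines so that the term $D^2\det[\alpha F].(H,H)$ drops out when $\rank H=1$. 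By instead absorbing the volumetric term into the single observation that scaling shifts the whole energy by the constant $2c\log\alpha$, you avoid both the additive split and the cofactor computation, and you obtain the slightly stronger statement that the full Hessian (not only its restriction to rank-one directions) scales by $\alpha^{-2}$ — which is consistent with the paper's formulas once the $\det H$ term is retained there. Your closing remark is also apt: no differentiability of $h$ is needed for the pointwise identity, since all differentiation is performed on $W$ as a function of $F$, which is twice differentiable by hypothesis.
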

\begin{proof}
	Let $\alpha>0$. The isochoric part $\Wiso(F)=h\bigl(\frac{\lambda_1}{\lambda_2}\bigr)$ of $W$ is conformally invariant by definition, which implies \mbox{$\Wiso(\alpha\.F)=\Wiso(F)$}, and therefore
	\begin{align}
		D^2\Wiso[\alpha F].(H,H) &= \frac{\mathrm{d}^2}{\mathrm{d}t^2}\.\Wiso(\alpha\. F + t\.H)\biggr|_{t=0} = \frac{\mathrm{d}^2}{\mathrm{d}t^2}\.\Wiso\bigl(\alpha\.\bigl(F+\alpha\inv t\.H\bigr)\bigr)\biggr|_{t=0}\notag\\
		&= \frac{\mathrm{d}^2}{\mathrm{d}t^2}\.\Wiso(F + t\.\alpha\inv H)\biggr|_{t=0}= D^2\Wiso(F).(\alpha\inv H,\alpha\inv H)\notag\\
		&= \frac{1}{\alpha^2}\.D^2\Wiso(F).(H,H)\label{eq:DWisoScalingInvariance}
	\intertext{for all $H\in\R^{2\times2}$. For the volumetric part $\Wvol(F)=c\.\log\det F$ we calculate}
		D^2\Wvol[\alpha\.F].(H,H) &= \frac{\mathrm{d}^2}{\mathrm{d}t^2}\. \Wvol(\alpha\. F + t\.H)\biggr|_{t=0}= \frac{\mathrm{d}^2}{\mathrm{d}t^2}\.c\.\log\det(\alpha\. F + t\.H)\biggr|_{t=0}\notag\\
		&=c\.\ddt\left[\frac{1}{\det(\alpha\. F + t\.H)}\cdot D\det[\alpha\. F + t\.H].H\right]_{t=0}\\
		&=-\frac{c}{\det(\alpha\. F)^2}\cdot \iprod{\Cof(\alpha\. F),H}^2+\frac{c}{\det(\alpha\. F)}\cdot D^2\det[\alpha\.F].(H,H)\notag
	\end{align}
	for all $H\in\R^{2\times2}$. For ellipticity of $W$, we can assume $\rank H=1$ and note that the determinant is affine linear in direction of rank-one matrices \cite[Theorem 5.20]{Dacorogna08}, and thus $D^2\det[\alpha\.F].(H,H)=0$ if $\rank(H)=1$. Therefore,
	\begin{align}
		D^2\Wvol[\alpha F].(H,H) &= -\frac{c}{\det(\alpha\. F)^2}\cdot \iprod{\Cof(\alpha F),H}^2
		= -\frac{c}{\alpha^4\det(F)^2}\cdot \iprod{\alpha\Cof(F),H}^2\notag\\
		&= -\frac{1}{\alpha^2}\cdot\frac{c}{\det(F)^2}\.\iprod{\Cof(F),H}^2
		= \frac{1}{\alpha^2}\.D^2\Wvol(F).(H,H)\,,\qquad\text{if }\rank(H)=1\,.
	\end{align}
	Together with \eqref{eq:DWisoScalingInvariance} this implies
	\begin{align}
		D^2W[\alpha F].(H,H) &= D^2\Wiso[\alpha F].(H,H) + D^2\Wvol[\alpha F].(H,H)\\
		&= \frac{1}{\alpha^2}\.D^2\Wiso(F).(H,H) + \frac{1}{\alpha^2}\.D^2\Wvol(F).(H,H)
		= \frac{1}{\alpha^2}\.D^2W(F).(H,H)\notag
	\end{align}
	for all $F,H\in\R^{2\times2}$ with $\rank(H)=1$ and all $\alpha>0$, which implies the scaling invariance of the ellipticity domain of $W\in\Ms$.
\end{proof}
\begin{lemma}\label{lem:rankoneInverseInvariance}
	Let $W\in\Ms$ be twice differentiable. The ellipticity domain of $W$ is invariant under inversion, i.e.\ if $W$ is elliptic at $F_0\in\GLp(2)$, then it is elliptic at $F_0\inv$.
\end{lemma}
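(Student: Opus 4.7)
The plan is to reduce $F_0\mapsto F_0^{-1}$ to a chain of simpler operations, under each of which Legendre--Hadamard ellipticity is manifestly invariant, so that Lemma~\ref{lem:rankoneScalingInvariance} handles the only genuinely non-trivial scaling factor. The crucial planar identity is
\[
	F_0^{-1} \;=\; \frac{1}{\det F_0}\,J\,F_0^T\,J^T\,,\qquad J \colonequals \begin{pmatrix} 0 & -1 \\ 1 & \phantom{-}0 \end{pmatrix} \in \SO(2)\,,
\]
which follows from $F^{-1} = (\det F)^{-1}\adj(F)$ together with the elementary computation $\Cof(F) = JFJ^T$ that is special to dimension two.

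Next, I would establish two invariances of LH-ellipticity which hold for every isotropic energy and therefore in particular for every $W\in\Ms$. First, transposition: since $F$ and $F^T$ share the same singular values, isotropy yields $W(F) = W(F^T)$, and since transposition is linear a direct chain rule gives
\[
	D^2 W(F).(H,H) \;=\; D^2 W(F^T).(H^T,H^T)\,.
\]
Because $H\mapsto H^T$ bijectively maps the cone of rank-one matrices onto itself, LH-ellipticity at $F$ is thus equivalent to LH-ellipticity at $F^T$. Second, the double rotation action: for $Q,R\in\SO(2)$, isotropy gives $W(QFR) = W(F)$, and the analogous computation yields $D^2 W(F).(H,H) = D^2 W(QFR).(QHR,QHR)$; as $H\mapsto QHR$ also bijectively permutes rank-one matrices, LH-ellipticity is likewise invariant under $F\mapsto QFR$.

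Chaining these two invariances with Lemma~\ref{lem:rankoneScalingInvariance} then gives the result via the string
\[
	F_0 \;\longleftrightarrow\; F_0^T \;\longleftrightarrow\; J\,F_0^T\,J^T \;\longleftrightarrow\; \tfrac{1}{\det F_0}\,J\,F_0^T\,J^T \;=\; F_0^{-1}\,,
\]
where each $\longleftrightarrow$ denotes equivalence of LH-ellipticity at the two endpoints, justified respectively by transposition invariance, rotational invariance (with $Q = J$, $R = J^T$) and the scaling invariance of Lemma~\ref{lem:rankoneScalingInvariance} applied with $\alpha = (\det F_0)^{-1} > 0$. The main point to watch is the bookkeeping of the $JF^TJ^T$-representation of the inverse and the verification that rank-one structure is preserved under each step; once this is in place, no quantitative estimate beyond the one already carried out in Lemma~\ref{lem:rankoneScalingInvariance} enters the argument, and the claim follows.
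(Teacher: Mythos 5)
Your proof is correct and follows essentially the same route as the paper: both reduce the claim via the factorization $F_0^{-1}=\frac{1}{\det F_0}(\Cof F_0)^T$ to a transposition step (using isotropy), a cofactor step exploiting the special planar structure, and the scaling invariance of Lemma~\ref{lem:rankoneScalingInvariance}. The only difference is in packaging: you verify the cofactor step through the identity $\Cof F = J F J^T$ and the bi-$\SO(2)$-invariance of the Legendre--Hadamard condition (which makes the preservation of rank-one directions under $H\mapsto JHJ^T$ immediate), whereas the paper works with the 2D linearity and involutivity of $\Cof$ directly and checks separately that $\Cof$ maps rank-one matrices to rank-one matrices.
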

\begin{proof}
	Due to the isotropy of every $W\in\Ms$, and since the singular values of $F$ and $F^T$ are identical, $W(F)=W(F^T)$ and therefore
	\begin{align}
		D^2W[F^T].(H,H) &= \frac{\mathrm{d}^2}{\mathrm{d}t^2}\.W(F^T + t\.H)\biggr|_{t=0} = \frac{\mathrm{d}^2}{\mathrm{d}t^2}\.W\bigl((F+t\.H^T)^T\bigr)\biggr|_{t=0}\notag\\
		&= \frac{\mathrm{d}^2}{\mathrm{d}t^2}\.W(F + t\.H^T)\biggr|_{t=0}=D^2\Wiso(F).(H^T,H^T)\,,
	\end{align}
	thus ellipticity at $F$ and $F^T$ are equivalent, cf.\ \cite{kruzik1999composition}. Moreover, Lemma \ref{lem:rankoneScalingInvariance} states that the ellipticity domain of $W$ is scaling invariant, i.e.\ ellipticity at $F$ implies $W$ is elliptic for all $\alpha\.F$ with $\alpha>0$.
	Combining both lemmas and choosing $\alpha=\frac{1}{\det F}>0$, we find that ellipticity at $\Cof F$ would imply ellipticity at $\frac{1}{\det F}\.(\Cof F)^T=F\inv$. Therefore, it remains to show that $W$ is elliptic at $\Cof F$.
	
	In the planar case, the singular values of $F$ and $\Cof F$ are identical\footnote{For arbitrary $F\in\R^{2\times2}$ we find
	\begin{align*}
		F\colonequals\matr{a&b\\c&d},\qquad 	B=F\.F^T=\matr{a^2+b^2&a\.c+b\.d\\a\.c+b\.d&c^2+d^2},\qquad\Cof B=\matr{c^2+d^2&-a\.c-b\.d\\-a\.c-b\.d&a^2+b^2}\\
		\implies\qquad\det(B-\lambda\.\id)=\lambda^2-(a^2+b^2+c^2+d^2)\.\lambda+\underbrace{(a^2+b^2)(c^2+d^2)-(a\.c+b\.d)^2}_{=\.\det B\.=\.\det\Cof B}=\det(\Cof B-\lambda\.\id)\,.
	\end{align*}
	Hence the eigenvalues of $B$ and $\Cof B=(\Cof F)(\Cof F)^T$ and thus the singular values of $F$ and $\Cof F$ are identical.} and thus $W(\Cof F)=W(F)$ for all $F\in\GLp(2)$ due to the isotropy of the energy. Furthermore, because of the simple shape of the cofactor matrix in the planar case $\Cof\matr{a&b\\c&d}=\matr{d&-c\\-b&a}$ we find
	\[
		\Cof(X+Y)=\Cof(X)+\Cof(Y)\qquad\text{and}\qquad\Cof(\Cof X)=X\qquad\text{for all }X,Y\in\R^{2\times2}.
	\]	
	Note carefully that these properties do \emph{not} hold for dimension $n>2$. Thus
	\begin{align*}
		D^2W[\Cof F].(H,H) &= \frac{\mathrm{d}^2}{\mathrm{d}t^2}\. W(\Cof F+t\.H) \biggr|_{t=0} = \frac{\mathrm{d}^2}{\mathrm{d}t^2}\. W(\Cof F+t\.\Cof(\Cof H)) \biggr|_{t=0}\\
		&= \frac{\mathrm{d}^2}{\mathrm{d}t^2}\. W(\Cof(F+t\.\Cof H))\biggr|_{t=0}= \frac{\mathrm{d}^2}{\mathrm{d}t^2}\. W(F+t\.\Cof H)\biggr|_{t=0}\\
		&= D^2W(F).(\Cof H,\Cof H)\,,
	\end{align*}
	which completes the proof because $\rank(H)=1$ implies\footnote{The rank of a matrix $M\in\Rnn$ is equal to the maximal size of nonzero minors. The cofactor $\Cof M$ is formed by $(n-1)\times(n-1)-$ minors of $M$ and consequently $\rank(\Cof M)=0$ if $\rank M<n-1$. In the case $\rank M=n-1$, at least one $(n-1)\times(n-1)-$ minor is nonzero which ensures $\rank(\Cof M)>0$. Because $M\.(\Cof M)^T=\det M\cdot\id=0$, the image of $(\Cof M)^T$ is in the kernel of $M$ which is of dimension $1$ and therefore $\rank(\Cof M)=1$.} $\rank(\Cof H)=1$.
\end{proof}
\begin{lemma}\label{lem:quasiScalingInvariance}
	Let $W\in\Ms$ be twice differentiable. Then the quasiconvexity domain of $W$ is scaling invariant: if $W$ is quasiconvex at $F_0\in\GLp(2)$, i.e.\ if
	\begin{equation}
		\int_\Omega W(F_0+\nabla\vartheta(x))\,\dx\geq\int_\Omega W(F_0)\,\dx\qquad\text{for all}\quad\vartheta\subset W_0^{1,\infty}(\Omega;\Rn)
	\end{equation}
	for any domain $\Omega\subset\R^2$,	then the energy is quasiconvex at $\alpha\.F_0$ for all $\alpha>0$.
\end{lemma}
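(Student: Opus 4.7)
The plan is to exploit the explicit form $W(F) = h(\lambda_1/\lambda_2) + c\.\log(\lambda_1\lambda_2)$ of energies in $\Ms$, which makes the effect of scaling transparent: rescaling the argument shifts $W$ by a constant that is independent of $F$, so the quasiconvexity inequality is preserved essentially for free.

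First I would record the key scaling identity. Conformal invariance of the isochoric part gives $\Wiso(\alpha\.G)=\Wiso(G)$, while for the logarithmic volumetric part
\[
    \Wvol(\det(\alpha\.G))=c\.\log(\alpha^2\det G)=2c\.\log\alpha+\Wvol(\det G)\,.
\]
Adding the two yields
\[
    W(\alpha\.G)=W(G)+2c\.\log\alpha\qquad\text{for all}\quad G\in\GLp(2),\;\alpha>0\,,
\]
i.e.\ scaling merely shifts $W$ by the constant $2c\log\alpha$ (a pattern already visible in the scaling of $D^2W$ by $\alpha^{-2}$ in Lemma~\ref{lem:rankoneScalingInvariance}).

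Second, given an arbitrary test function $\vartheta\in W_0^{1,\infty}(\Omega;\R^2)$, I would set $\psi\colonequals\alpha\inv\vartheta$, which again lies in $W_0^{1,\infty}(\Omega;\R^2)$, and observe the pointwise identity
\[
    \alpha\.F_0+\nabla\vartheta(x)=\alpha\bigl(F_0+\nabla\psi(x)\bigr)\,.
\]
Applying the scaling identity pointwise (and using that $F_0+\nabla\psi(x)\in\GLp(2)$ whenever $\alpha F_0+\nabla\vartheta(x)\in\GLp(2)$, since $\det$ scales positively) gives
\[
    W\bigl(\alpha\.F_0+\nabla\vartheta(x)\bigr)=W\bigl(F_0+\nabla\psi(x)\bigr)+2c\.\log\alpha\,.
\]

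Third, integrating over $\Omega$ and invoking quasiconvexity of $W$ at $F_0$ with the test function $\psi$:
\[
    \int_\Omega W(\alpha\.F_0+\nabla\vartheta)\,\dx
    = \int_\Omega W(F_0+\nabla\psi)\,\dx + 2c\.\log\alpha\cdot\abs\Omega
    \geq \abs\Omega\.W(F_0) + 2c\.\log\alpha\cdot\abs\Omega
    = \abs\Omega\.W(\alpha\.F_0)\,,
\]
which is exactly quasiconvexity at $\alpha\.F_0$. There is no real obstacle here once the scaling identity is in hand; the only subtlety is the implicit $+\infty$ extension of $W$ off $\GLp(2)$, but this is immediate from the positive homogeneity of the determinant, so the argument is quantitatively the same proof-by-substitution idea that underlies Lemma~\ref{lem:rankoneScalingInvariance}, lifted from the infinitesimal level of $D^2W$ to the integral level demanded by quasiconvexity.
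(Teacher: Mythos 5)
Your argument is correct and is essentially the paper's own proof in a slightly rearranged form: the identity $W(\alpha\.G)=W(G)+2c\.\log\alpha$ (conformal invariance of $\Wiso$ plus the constant shift of the $c\.\log\det$ part) is exactly what the paper uses to show that the energy gap $\int_\Omega W(F_0+\nabla\vartheta)\,\dx-\abs\Omega\.W(F_0)$ is unchanged under scaling. Your explicit substitution $\psi=\alpha\inv\vartheta$ merely makes the bijection between test functions for $F_0$ and for $\alpha\.F_0$ visible, which the paper leaves implicit, so the two proofs coincide in substance.
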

\begin{proof}
	We show that the so-called \emph{energy gap}
	\begin{equation}
		\int_\Omega W(F_0+\nabla\vartheta(x))\,\dx-|\Omega|\.W(F_0)\,,\label{eq:energyGap}
	\end{equation}
	i.e.\ the difference between the energy of $\varphi(x)=F_0\.x+\vartheta(x)$ and the homogeneous solution $\varphi_0(x)=F_0\.x$, is scaling invariant. Note that quasiconvexity at $F_0$,
	\begin{align*}
		\int_\Omega W(\nabla\varphi(x))\,\dx\geq |\Omega|\.W(F_0)\qquad\text{if}\quad\varphi(x)|_{\partial\Omega}=F_0.x\,,
	\end{align*}
	implies that the energy gap is always non-negative. We write $F=\nabla\varphi$ and compute
	\begin{align}
		\int_\Omega W(\alpha\.F)\,\dx-|\Omega|\cdot W(\alpha\.F_0)&=\int_\Omega W(\alpha\.F)-W(\alpha\.F_0)\,\dx\notag\\
		&=\int_\Omega\Wiso(\alpha\.F)+c\.\log\det(\alpha F)-\Wiso(\alpha\.F_0)-c\.\log\det(\alpha F_0)\,\dx\notag\\
		&=\int_\Omega\Wiso(F)-\Wiso(F_0)+c\.\log(\alpha^2\det F)-c\.\log(\alpha^2\det F_0)\dx\notag\\
		&=\int_\Omega \Wiso(F)-\Wiso(F_0)+c\left(2\.\log\alpha+\log\det F-2\.\log\alpha-\log\det F_0\right)\dx\notag\\
		&=\int_\Omega\Wiso(F)+c\.\log\det F-\Wiso(F_0)-c\.\log\det F_0\,\dx\\
		&=\int_\Omega W(F)-W(F_0)\,\dx=\int_\Omega W(F)\,\dx-|\Omega|\cdot W(F_0)\,.\notag\qedhere
	\end{align}
\end{proof}
\begin{remark}
	With Lemmas \ref{lem:rankoneScalingInvariance} and \ref{lem:quasiScalingInvariance} we showed that both rank-one convexity and quasiconvexity are scaling invariant for any energy $W\in\Ms$. Regarding Morrey's question for planar isotropic energies with volumetric-isochoric split we can therefore assume $\det F_0=1$ without loss of generality. More specifically, for arbitrary $W\in\Ms$ we just need to prove the rank-one convexity for all $F_0\in\GLp(2)$ with $\det F_0=1$ to obtain rank-one convexity at all $F\in\GLp(2)$. Likewise, it is sufficient to only check for quasiconvexity starting with a homogeneous deformation $x\mapsto F_0\.x$ with $\det F_0=1$ in place of arbitrary $F\in\GLp(2)$, cf.\ Figure \ref{fig:notRankOneconvex}.
\end{remark}
\begin{figure}[h!]
	\centering
	\includegraphics[width=.49\textwidth]{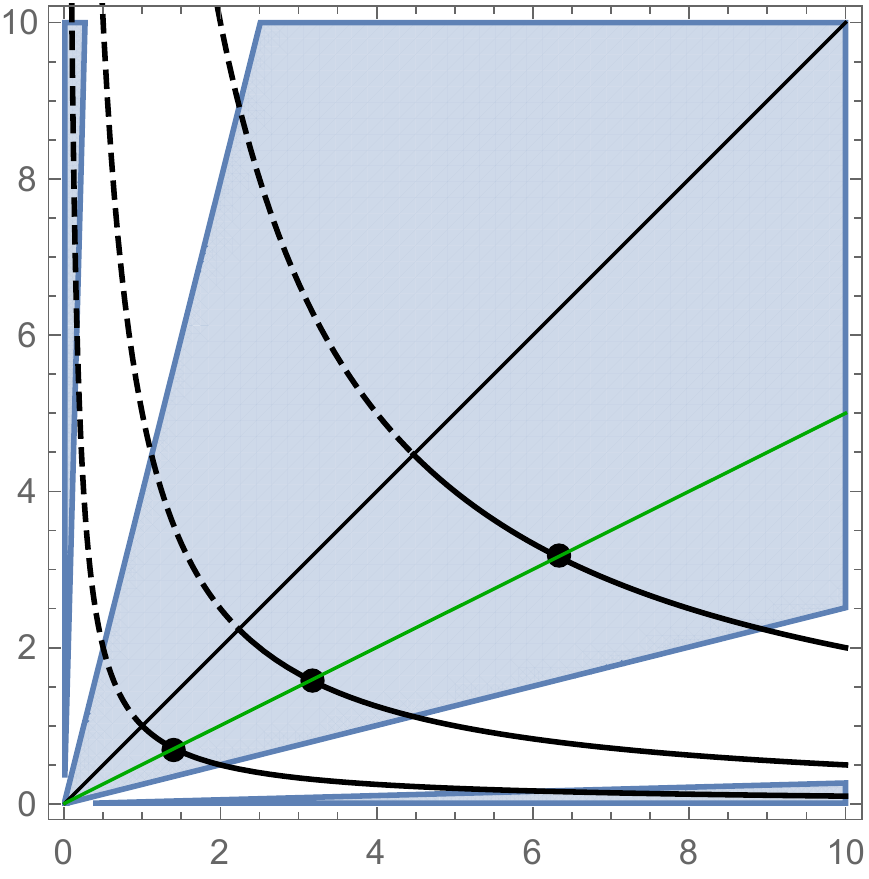}
  	\hfill
	\includegraphics[width=.49\textwidth]{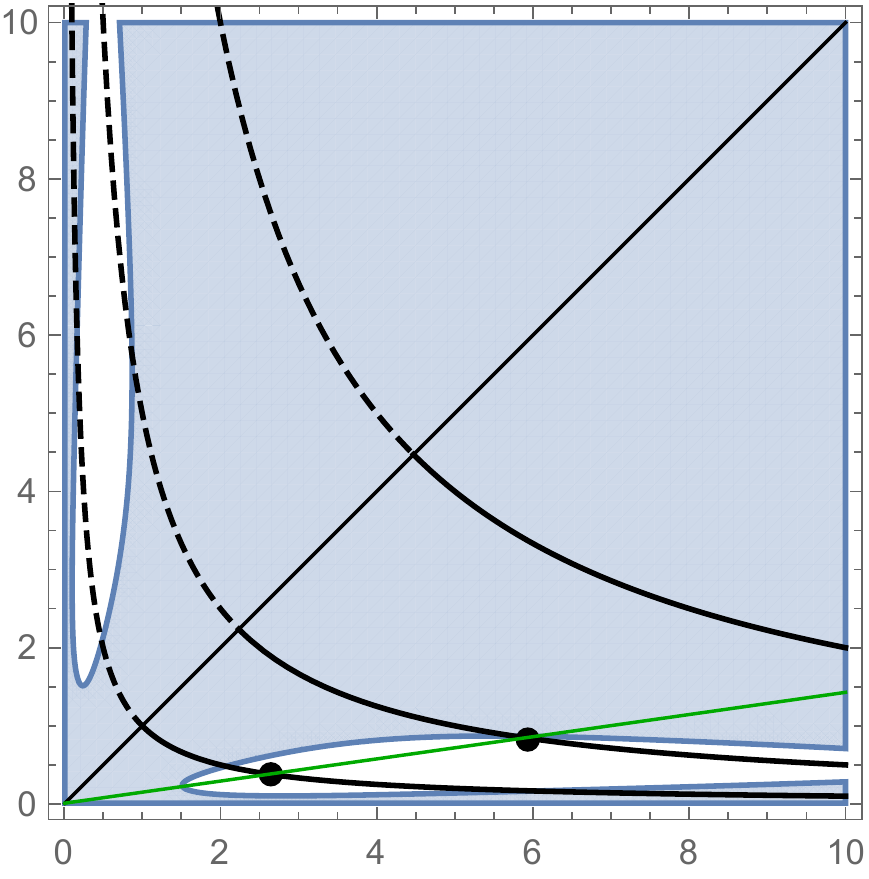}
	\caption{Visualization of a possible [ellipticity/quasiconvexity] domain (shaded blue) of a planar isotropic energy function in terms of the singular values $\lambda_1,\lambda_2>0$. Left: With scaling invariance for $W\in\Ms$, i.e.\ [ellipticity/quasiconvexity] on the curve $\det F=\lambda_1\lambda_2=c$ (black) implies [ellipticity/quasiconvexity] for the corresponding cone. Right: For an arbitrary volumetric-isochoric split energy this invariance is lost.}\label{fig:notRankOneconvex}
\end{figure}
%
%
%
%
%
%%%%%%%%%%%%%%%%%%%%%%%%%%%%%%%%%%
\subsection{The least rank-one convex candidate $\Wmp(F)$}\label{sec:Wmp}

Continuing with the class $\Ms$, i.e.\ with energy functions with $c\log\det F$ as volumetric part, we focus on positive $c>0$. As multiplying a function by a scalar does not change its convexity behavior, we can consider $c=1$ without loss of generality and assume that the isochoric part is convex.\footnote{If the isochoric part $h(t)$ is not convex, the combined function $h\bigl(\frac{\lambda_1}{\lambda_2}\bigr)+\log(\lambda_1\lambda_2)$ will not correspond to a rank-one convex function \cite{agn_voss2021morrey}.} Thus we consider the class
\begin{equation}
	\Mps\colonequals\left\{W(F)=h\biggl(\frac{\lambda_1}{\lambda_2}\biggr)+\log(\lambda_1\lambda_2)\;|\;h\col(1,\infty)\to\R \text{ is convex}\right\}.
\end{equation}
Using sharp rank-one convexity conditions \cite{agn_voss2019volIsLog}, it is possible to identify \enquote{least} rank-one convex candidates by searching for functions that satisfy those conditions by equality. Surprisingly \cite{agn_voss2021morrey}, it is possible to show  that the question of quasiconvexity reduces to the single energy function
\begin{align}
	\Wmp(F)&=\frac{\lambdamax}{\lambdamin}-\log\frac{\lambdamax}{\lambdamin}+\log\det F=\frac{\lambdamax}{\lambdamin}+2\.\log\lambdamin\label{eq:WmpinK}\\
	&=\K(F)+\sqrt{\K(F)^2-1}-\arcosh\K(F)+\log\det F\,.\notag
\end{align}
Hence, if $\Wmp$ were quasiconvex, then every function in the class $\Mps$ and thus every rank-one convex planar isotropic energy function with an additive volumetric-isochoric split whose isochoric part $h(t)$ is convex would be quasiconvex as well. A first analytic observation (see Appendix \ref{sec:RadialEulerLagrange}) could not conclusively answer this question, but opens the possibility to interesting microstructures having the same energy value as the homogeneous deformation. This motivates to proceed numerically and try to falsify the quasiconvexity of $\Wmp$.
%
%
%
%
%
%%%%%%%%%%%%%%%%%%%%%%%%%%%%%%%%%%%%%%%%%%%%%%%%%%%%%%%%%%%%%%%%%
\section{The classical finite element approach}\label{sec:numericOliver}

A first way to show that a given energy density is not quasiconvex is the finite element method. For this, we discretize the displacement field $\vartheta$ by Lagrange finite elements \cite{ciarlet2002finite}. We use triangle grids and first-order finite elements only. That way, the deformation gradient and hence the integrand are piecewise constant and the hyperelastic energy $\int_\Omega W(\nabla\varphi)\,\dx$ can be computed without quadrature error which is important in view of exactly calculating the energy gap \eqref{eq:energyGap}. Our implementation is based on the \textsc{Dune} libraries~\cite{bastian_et_al:2021,sander:2020}.
%
%
%
%
%
%%%%%%%%%%%%%%%%%%%%%%%%%%%%%%%%%%%%%%%%%%%%%%%%%%%%%%%%%%%%%%%%%
\subsection{Testing for quasiconvexity}

We perform tests on two domains: The square $[-1,1]^2$ and the unit disc $B_1(0)$. Both are filled with coarse triangle grids as shown in Figure \ref{fig:circle_domain_and_coarse_grid}. For ease of implementation, we approximate the boundary of the disc by six quadratic arcs (dashed lines). The finite elements grids are then constructed by uniform refinement of the coarse grids. For the disc grid, new boundary vertices are placed not at edge midpoints but on the curved arcs approximation the boundary. The final grids consist of 16641 vertices and 32768 elements for the square and 12481 vertices and 24576 elements for the disc. Due to the scaling invariance (the results of Section \ref{sec:scalingInvariance}) it is sufficient to test with an $F_0$ such that $\det F_0=1$, thus due to isotropy we consider only $F_0=\diag(\sqrt{a},\frac{1}{\sqrt{a}})$ with arbitrary $a>0$. For the result shown here we select $a=2$.

\begin{figure}[h!]
	\begin{center}
		\begin{tikzpicture}[scale=2.5]
			\coordinate (a1)  at ( -1, 0);
			\coordinate (a2)  at (  1, 0);
			\coordinate (a3)  at (  0, 0);
			\coordinate (a4)  at (  0,-1);
			\coordinate (a5)  at (  0, 1);
			\coordinate (a6)  at ( -1,-1);
			\coordinate (a7)  at (  1, 1);
			\coordinate (a8)  at (  1,-1);
			\coordinate (a9)  at ( -1, 1);
			
			% The grid
			\draw (a6) -- (a4) -- (a8) -- (a2) -- (a7) -- (a5) -- (a9) -- (a1) -- (a6) -- (a3) -- (a4) -- (a2) -- (a3) -- (a5) -- (a1) -- (a3) -- (a7);
		\end{tikzpicture}
		\hspace{2.5cm}
		\begin{tikzpicture}[scale=2.5]
			\coordinate (a1)  at ( -1, 0);
			\coordinate (a2)  at (  1, 0);
			\coordinate (a3)  at (  0, 0);
			\coordinate (a4)  at ( -0.5, -0.8660254037844386);
			\coordinate (a5)  at (  0.5, -0.8660254037844386);
			\coordinate (a9)  at (  0.5, 0.8660254037844386);
			\coordinate (a10) at ( -0.5, 0.8660254037844386);
			
			% The grid
			\draw (a1) -- (a10) -- (a9) -- (a2) -- (a5) -- (a4) -- cycle;
			\draw (a1) -- (a2);
			\draw (a10) -- (a5);
			\draw (a9) -- (a4);
			
			\draw [dashed] (a3) circle (1);
		\end{tikzpicture}
	\end{center}
	\caption{Coarse grids for square and disc domain. As the disc grid gets refined, it approximates the piecewise polynomial boundary better and better.}\label{fig:circle_domain_and_coarse_grid}
\end{figure}
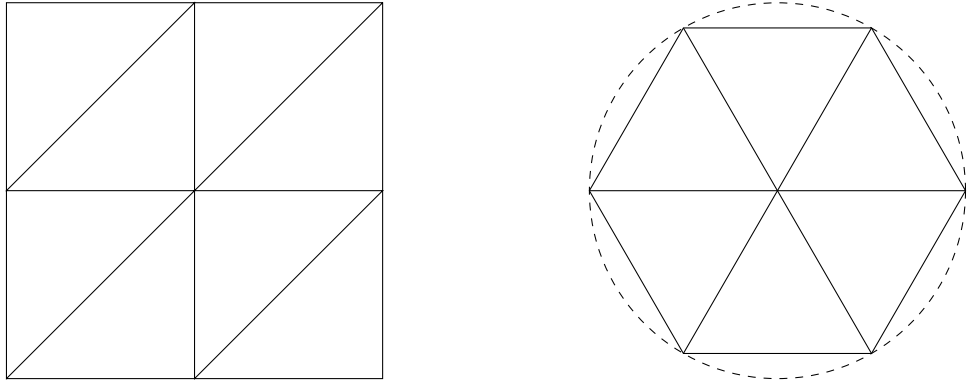

We minimize the hyperelastic energy $\int_\Omega\Wmp(F)\,\dx$ with a trust-region algorithm. These algorithms have been thoroughly studied in the literature, and they can be shown to always converge to stationary points of the energy~\cite{conn_gould_toint:2000}. As trust-region methods are descent methods, maximizers and saddle-points are not attractive points, and convergence is therefore typically towards (local) minimizers only.

Trust-region methods perform sequences of quadratic minimization problems with convex inequality constraints. We use a trust-region defined in terms of the maximum norm, and therefore the convex constraints are a set of independent bound constraints. For the quadratic bound-constrained minimization problem we then use a monotone multigrid method~\cite{kornhuber:1994} as suggested in \cite{sander:2012}. Such methods achieve multigrid convergence rates even for bound-constrained problems. We solve each inner problem until the maximum-norm of the correction drops below $10^{-5}$. The large but sparse tangent matrices are computed using the ADOL-C algorithmic differentiation system~\cite{walther_griewank:2012}. 

When looking for global minimizers with a descent method, the question of initial iterates is of central importance. As shown exemplary in the next section, when the energy is not quasiconvex, imperfections caused by the finite-precision arithmetic are sufficient to drive the system towards microstructures even starting from the homogeneous configuration. For the particular energy $\Wmp$, however, this did not lead to any energy decrease. We obtained the same negative results for some other \enquote{obvious} initial iterates, such as random perturbations of the homogeneous state. More involved constructions of non-homogeneous initial iterates are described in Sections \ref{sec:numericNonElliptic} and \ref{sec:Curl}.
%
%
%
%
%
%%%%%%%%%%%%%%%%%%%%%%%%%%%%%%%%%%%%%%%%%%%%%%%%%%%%%%%%%%%%%%%%%%%%%
\subsection{Non-elliptic microstructure}\label{sec:numericNonElliptic}

In the following, we introduce several additional ideas to search for microstructures with energy levels below the homogeneous state with more adept methods.

In order to better understand the shape of a possible microstructure, we ensured its existence by considering slightly modified problems. We start with the weakened energy
\begin{equation}
	W_c(F)=\frac{\lambdamax}{\lambdamin}-\log\frac{\lambdamax}{\lambdamin}+c\.\log\det F\,,\qquad c>1\,,\label{eq:Wc}
\end{equation}
which is not rank-one convex anymore but satisfies $\displaystyle\lim_{c\to1}W_c(F)=\Wmp(F)$ for all $F\in\GLp(2)$. We are interested in the resulting microstructures especially if they do not appear to be simple laminations caused by the loss of ellipticity \cite{ball1987,dolzmann1999numerical,li2000finite}. Any local minimizer found for $W_c$ can then be used as an initial deformation for minimizing $\Wmp$ again with the hope to maintain the non-homogeneous structure and thereby disprove quasiconvexity of the energy $\Wmp$ itself.

\begin{figure}[h!]
	\centering
	\includegraphics[width=.47\textwidth, trim = 5.3cm 0.2cm 0.3cm 3.5cm, clip]{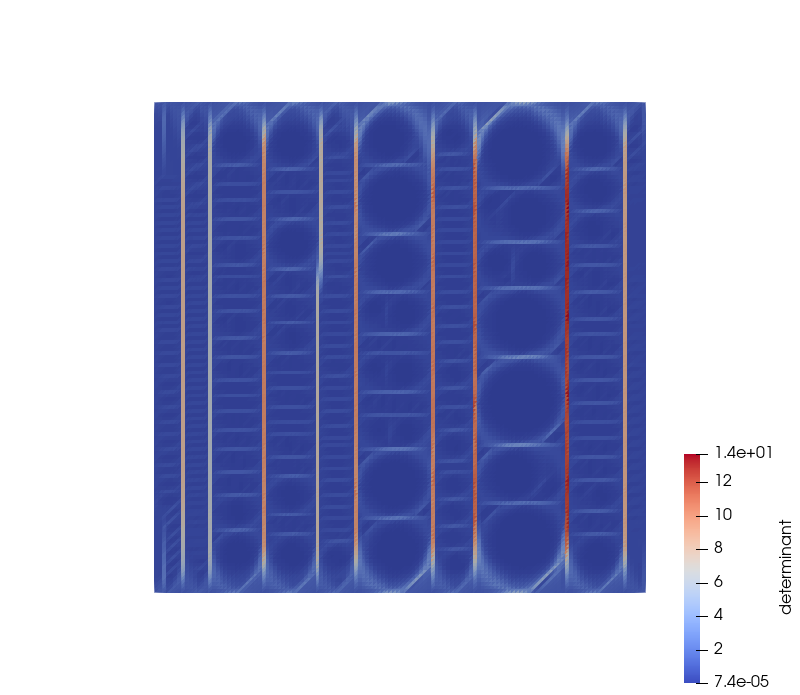}
	\hfill
	\includegraphics[width=.47\textwidth, trim = 5.3cm 0.2cm 0.3cm 3.5cm, clip]{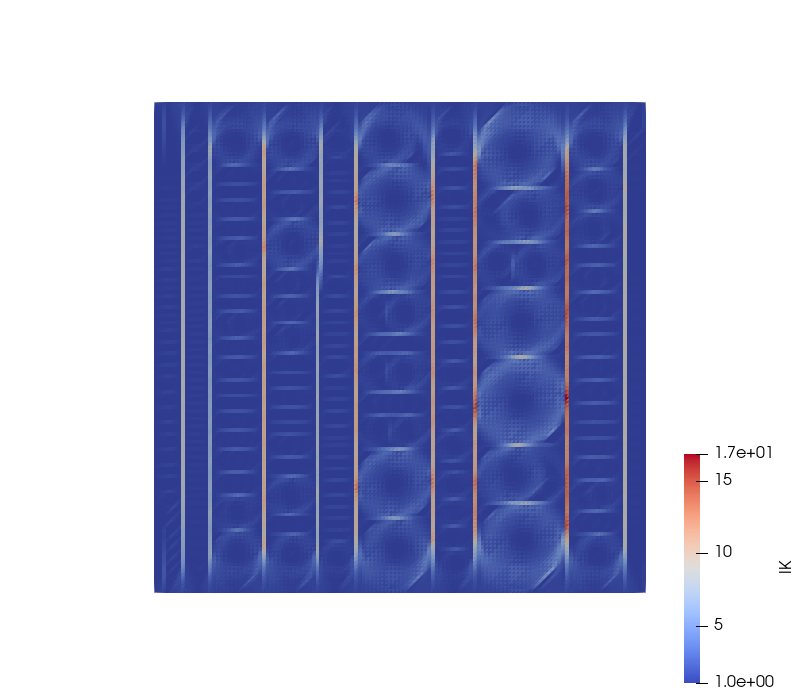}
	\caption{Microstructure for the non-elliptic energy $W_c$ with $c=1.1$ and $F_0=\diag\bigl(\sqrt{2},\frac{1}{\sqrt{2}}\bigr)$. The color shows the determinant (left) and the distortion $\K(F)=\frac{1}{2}\.\frac{\norm F^2}{\det F}$ (right).}\label{fig:LaminateMicrostructure}
\end{figure}

While our material $W_c$ indeed shows microstructures that contain a lamination structure, we observe radially symmetric contracting regions as well, cf.\ Figure \ref{fig:LaminateMicrostructure}. However, we already know that deformations of this kind cannot lower the energy value of $\Wmp$ (see Appendix \ref{sec:RadialEulerLagrange}) and thus they are not suited for finding a new microstructure by using them as an initial configuration for minimizing $\Wmp(F)$ again. This is confirmed by direct numerical experiments: when letting the trust-region solver of the previous section minimize $\Wmp$ starting from the configurations shown in Figure \ref{fig:LaminateMicrostructure}, all we obtain is convergence to the homogeneous state.

Therefore, we continue with an alternative numerical experiment to produce more convoluted microstructures. For this we place three disjoint balls $B_{r_i}(x_i)$, $i=1,2,3$ with radius $0.2$ and center $x_1=(-0.5,0)$, $x_2=(0.35,0.35)$, and $x_3=(0.35,-0.35)$ inside the unit disc domain. We then set $c>1$ inside each circles $B_{r_i}(x_i)$ but fix $c=1$ elsewhere, i.e.
\[
	c(x)=\begin{cases}c^\star&\caseif x\in\cup_i B_{r_i}(x_i)\,,\\1&\caseelse\end{cases}\qquad\text{with}\quad c^\star>1\,.
\]
\begin{figure}[h!]
	\centering
	\includegraphics[width=.47\textwidth, trim = 5.3cm 0.2cm 0.3cm 3.5cm, clip]{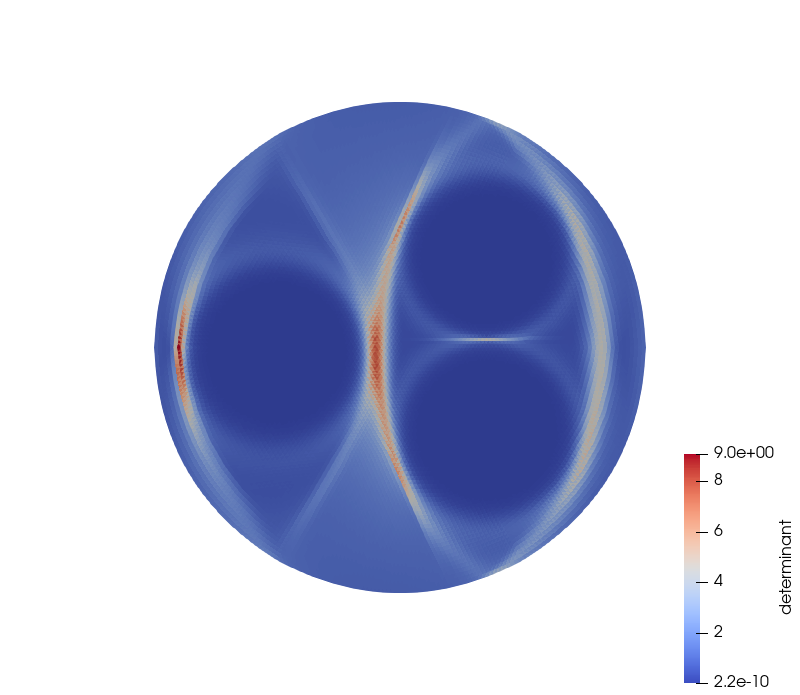}
	\hfill
	\includegraphics[width=.47\textwidth, trim = 5.3cm 0.2cm 0.3cm 3.5cm, clip]{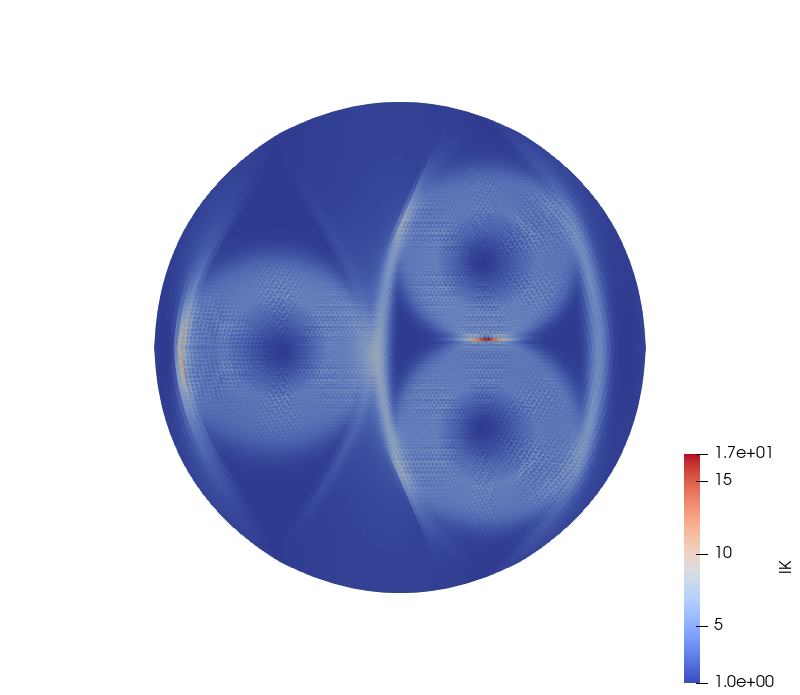}
	\includegraphics[width=.47\textwidth, trim = 5.3cm 0.2cm 0.3cm 3.5cm, clip]{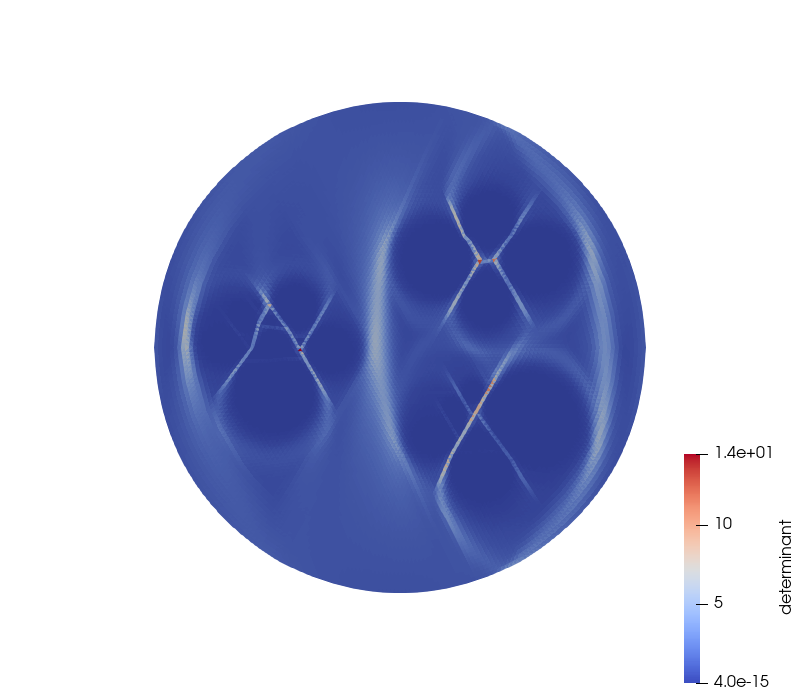}
	\hfill
	\includegraphics[width=.47\textwidth, trim = 5.3cm 0.2cm 0.3cm 3.5cm, clip]{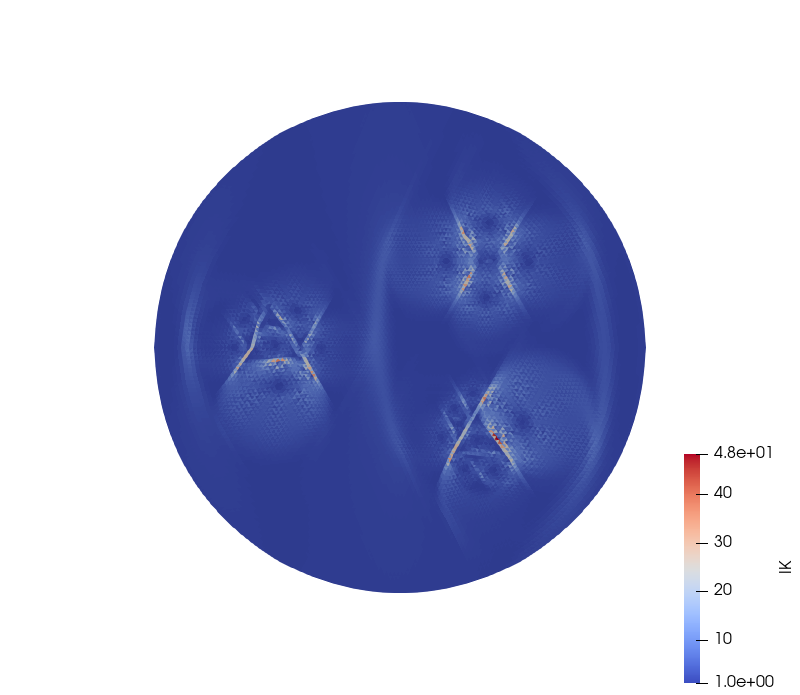}
	\caption{Microstructures for the non-elliptic energy $W_c$ with $c=1.1$ (top) and $c=2$ (bottom) inside three smaller circles and $c=1$ outside starting. The boundary deformation is $F_0=\diag\bigl(\sqrt{2},\frac{1}{\sqrt{2}}\bigr)$. The colors encode the determinant (left) and the distortion $\K(F)=\frac{1}{2}\.\frac{\norm F^2}{\det F}$ (right).}\label{fig:CircleMicrostructure2}
\end{figure}

For values slightly larger than $c=1$ we observe that all three circles contract in a radially symmetric fashion, cf.\ Figure \ref{fig:CircleMicrostructure2}. Again, we note that Appendix \ref{sec:RadialEulerLagrange} shows that radial symmetric contracting deformations have the same energy value for the limit case $c=1$. Increasing the weighting of $\log\det F$ by raising $c$ results in lower energy values compared to the homogeneous deformation. For higher values of $c$, the microstructure becomes more convoluted but keeps its contraction structure, cf.\ Figure \ref{fig:CircleMicrostructure2}.

We note that both microstructures are primarily located inside these circles (where ellipticity is lost), while for the rest of the material with $c=1$, the deformation remains mostly homogeneous. In particular, the borders of the inner circles maintain their shape to a certain extent, even though we do not impose additional internal boundary conditions to ensure this. We interpret these observations as a first indicator towards our energy candidate $\Wmp$ being quasiconvex since the response of $\Wmp$ is \enquote{stable} toward the assumed non-elliptic perturbation in the circle.
%
%
%
%
%
%%%%%%%%%%%%%%%%%%%%%%%%%%%%%%%%%%%%%%%%%%%%%%%%%%%%%%%%%%%%%%%%%
\section{Deep neural networks}\label{sec:periodic}

One may conjecture that minimizing $\Wmp$ in a finite element space fails to find microstructure because each finite element coefficient influences only a very local part of the deformation function. Additionally, when the  strain energy density lacks quasiconvexity (which we consider possible for $W_\text{magic}^+$), FEM-methods generally fail \cite{KumarEtAl2020} due to the non-uniqueness of the solution, i.e. the microstructures.
In this chapter, we experiment with an alternative approach where this relationship is more global.
%
%
%
%
%
%%%%%%%%%%%%%%%%%%%%%%%%%%%%%%%%%%%%%%%%%%%%%%%%%%%%%%%%%
\subsection{Physics-informed neural networks}\label{sec:numericSid}

We use a numerical scheme which is based on deep neural networks as an ansatz for solving partial differential equations, an idea also referred to  as \textit{physics-informed neural networks} \cite{raissi2019physics,KarniadakisEtAl2021}. In principle, this is similar to the ansatz constructed classically using finite element functions. However, deep neural networks generally lead to highly nonlinear and more efficient (in terms of number of parameters) approximation spaces with considerable approximation properties even for low numbers of coefficients.

In the following, we consider periodic deformations only, cf.\ Section \ref{sec:periodicBoundary}. Consider the following ansatz for the periodic superposition:
\begin{align}
	\vartheta_{\#,\omega}(x_1,x_2) =\;&(1-\cos(2\pi x_1))\.\mathcal{F}_{\omega_f}(x_1)+(1-\cos(2\pi x_2))\.\mathcal{G}_{\omega_g}(x_2)\notag\\
		&+(1-\cos(2\pi x_1)) (1-\cos(2\pi x_2))\.\mathcal{H}_{\omega_h} (x_1,x_2)\,,\label{eq:NNansatz}
\end{align}
where $\mathcal{F}_{\omega_f},\mathcal{G}_{\omega_g}\col[0,1]\rightarrow \R^2$ and $\mathcal{H}_{\omega_h}\col[0,1]^2\rightarrow \R^2$ are deep neural networks \cite{Schmidhuber2015} parameterized by parameters $\omega_f$, $\omega_g$, and $\omega_h$, respectively. The ansatz is intentionally constructed this way to identically satisfy the periodic boundary conditions \eqref{eq:periodicBounderayConditions}. The neural network architectures are given by repeated composition of successive high-dimensional linear and nonlinear transformations as
\begin{align}
		\mathcal{F}_{\omega_f} (x_1) &=
		\mathcal{L}^{64\rightarrow2} _{\omega_{f,5}} \circ
		\mathcal{R}\circ
		\mathcal{L}^{64\rightarrow64} _{\omega_{f,4}} \circ
		\mathcal{R}\circ
		\mathcal{L}^{64\rightarrow64} _{\omega_{f,3}} \circ
		\mathcal{R}\circ
		\mathcal{L}^{64\rightarrow64} _{\omega_{f,2}} \circ
		\mathcal{R}\circ
		\mathcal{L}^{1\rightarrow64} _{\omega_{f,1}}
		(x_1)\,,\notag\\   
		\mathcal{G}_{\omega_g} (x_2) &=
		\mathcal{L}^{64\rightarrow2} _{\omega_{g,5}} \circ
		\mathcal{R}\circ
		\mathcal{L}^{64\rightarrow64} _{\omega_{g,4}} \circ
		\mathcal{R}\circ
		\mathcal{L}^{64\rightarrow64} _{\omega_{g,3}} \circ
		\mathcal{R}\circ
		\mathcal{L}^{64\rightarrow64} _{\omega_{g,2}} \circ
		\mathcal{R}\circ
		\mathcal{L}^{1\rightarrow64} _{\omega_{g,1}}
		(x_2)\,,\\
		\mathcal{H}_{\omega_h} (x_1,x_2) &=
		\mathcal{L}^{64\rightarrow2} _{\omega_{h,5}} \circ
		\mathcal{R}\circ
		\mathcal{L}^{64\rightarrow64} _{\omega_{h,4}} \circ
		\mathcal{R}\circ
		\mathcal{L}^{64\rightarrow64} _{\omega_{h,3}} \circ
		\mathcal{R}\circ
		\mathcal{L}^{64\rightarrow64} _{\omega_{h,2}} \circ
		\mathcal{R}\circ
		\mathcal{L}^{2\rightarrow64} _{\omega_{h,1}}
		(x_1,x_2)\,.\notag
\end{align}
Here, $\mathcal{L}^{i \rightarrow j}_{\omega_{\square,k}}$, $k=1,\cdots,5$ ($\square$ is a placeholder for `$f$', `$g$', and `$h$') denotes the $k^\text{th}$-linear transformation parameterized by the set of weights and biases $\omega_{\square,k} = \{A_{\square,k},b_{\square,k}\}$ (with $\omega_{\square} = \{\omega_{\square,k}\}$) such that any $z\in\R^i$ is transformed via
\begin{equation}
	\mathcal{L}^{i \rightarrow j}_{\omega_{\square,k}} (z) = A_{\square,k}z + b_{\square,k}\,, \qquad \text{with} \qquad A_{\square,k}\in\R^{j \times i},\; b_{\square,k}\in\R^j\,.
\end{equation}
The linear transformations are interleaved with element-wise nonlinear transformations $\mathcal{R}(\cdot) = \tanh(\cdot)$.

Since each layer of a neural network is a differentiable operation, the  gradient of the superposition field $\vartheta_\#$ can be computed using the chain rule. This is efficiently implemented using an automatic differentiation engine \cite{AtilimEtAl2018}. Note that, unlike numerical differentiation by, e.g., finite differences, the gradients computed via automatic differentiation are analytically exact.

For numerical integration of the strain energy density over the domain $\Omega$ we discretize the domain with a uniform grid  $\{(x_1^\alpha,x_2^\beta)\,|\, \alpha,\beta=1,\dots,N\}$ of $N \times N$ points.  For a given $\nabla \vartheta_{\#,\omega}$, the total strain energy is approximated via the trapezoidal integration rule as
\begin{equation}
	I(\varphi)=	\int_\Omega\Wmp(F_0+\nabla\vartheta_{\#,\omega}(x))\,\dx \approx   \sum_{\alpha=1}^N \sum_{\beta=1}^N \xi(\alpha)\xi(\beta)\.\Wmp\bigl(F_0+\nabla\vartheta_{\#,\omega}(x_1^\alpha,x_2^\beta)\bigr),
\end{equation}
where the integration weights $\xi(\alpha)$ are
\begin{equation}
	\xi(\alpha) = \begin{cases}
		\frac12\.\frac{1}{N-1}&\caseif\; \alpha = 1 \text{ or } \alpha = N\,,\\
		\frac{1}{N-1} &\caseelse[.]
	\end{cases}
\end{equation}
Whether the trapezoidal integration rule over- or underestimates the integral depends on whether the integrand is convex or concave, respectively, in the interval of the integration. However, if the integrand exhibits an inflection point (which is also observed here), over-/underestimation of the integral cannot be guaranteed with this rule. 

The optimal parameters $\omega=\{\omega_f,\omega_g,\omega_h\}$ of the neural networks are then obtained as minimizers of the total energy
\begin{equation}
	\omega^\star = \arg \min_\omega \sum_{\alpha=1}^N \sum_{\beta=1}^N \xi(\alpha) \xi(\beta)\. W_\text{magic}^{+}\bigl(F_0+\nabla\vartheta_{\#,\omega}(x_1^\alpha,x_2^\beta)\bigr).
\end{equation}
The minimization problem is solved iteratively using \emph{Adam} \cite{KingmaEtAl2017}, an efficient first-order gradient-based stochastic optimization method. The derivatives of the objective function with respect to the parameters $\omega$ are computed via automatic differentiation again. Following the minimization, the periodic superposition field $\vartheta_\#=\vartheta_{\#,\omega^\star}$ is given by \eqref{eq:NNansatz}. The \emph{Adam} optimizer was used for $2000$ iterations with an initial learning rate of $10^{-3}$ which was decayed by a factor of $0.1$ after the $700^{\text{th}}$, $1400^{\text{th}}$, and $1800^{\text{th}}$ iterations. The numerical scheme was implemented in PyTorch \cite{PyTorch2019}.

\figurename ~\ref{fig:numericsPeriodic} illustrates the representative microstructures obtained via the numerical scheme for $F_0$ equal to $\diag\bigl(3,\frac{1}{3}\bigr)$ and $\diag\bigl(10,\frac{1}{10}\bigr)$ on a grid  of resolution $N=128$.  For both values of $F_0$ the microstructure has the form of a \emph{smooth laminate} and its energy seems to equal the one of the corresponding homogeneous deformation up to machine precision\footnote{Due to the underestimation of integration using the trapezoidal rule, the total strain energy was observed to be slightly below the energy of the homogeneous deformation. However, under grid-refinement with $N\gg 1$, this error converges to zero and the total strain energy was observed to equal the energy via homogeneous deformation up to machine precision.} which motivates the search for a precise form of the analytical solution.

\begin{figure}[h!]
	\centering
	\begin{subfigure}{\textwidth}
		\centering
		\includegraphics[width=\textwidth]{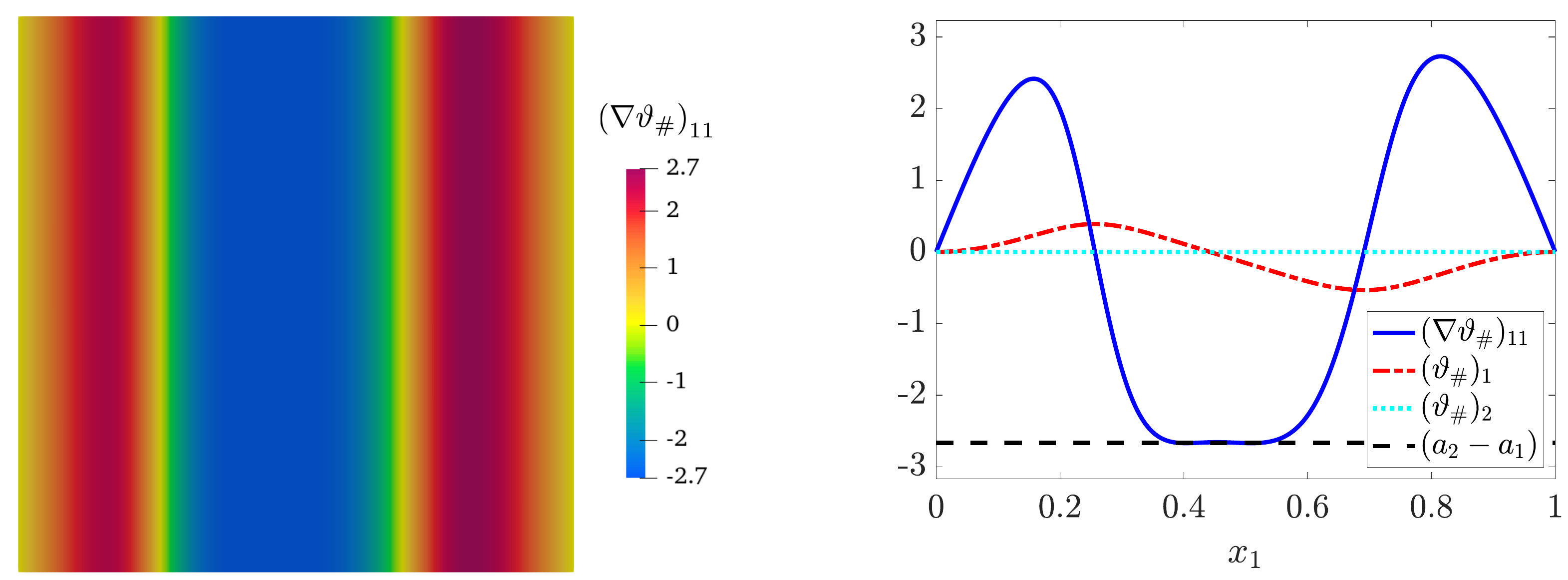}
		\caption{$F_0 = \diag(a_1,a_2),\ a_1=3,\ a_2=1/3$}
	\end{subfigure}
	\begin{subfigure}{\textwidth}
		\centering
		\includegraphics[width=\textwidth]{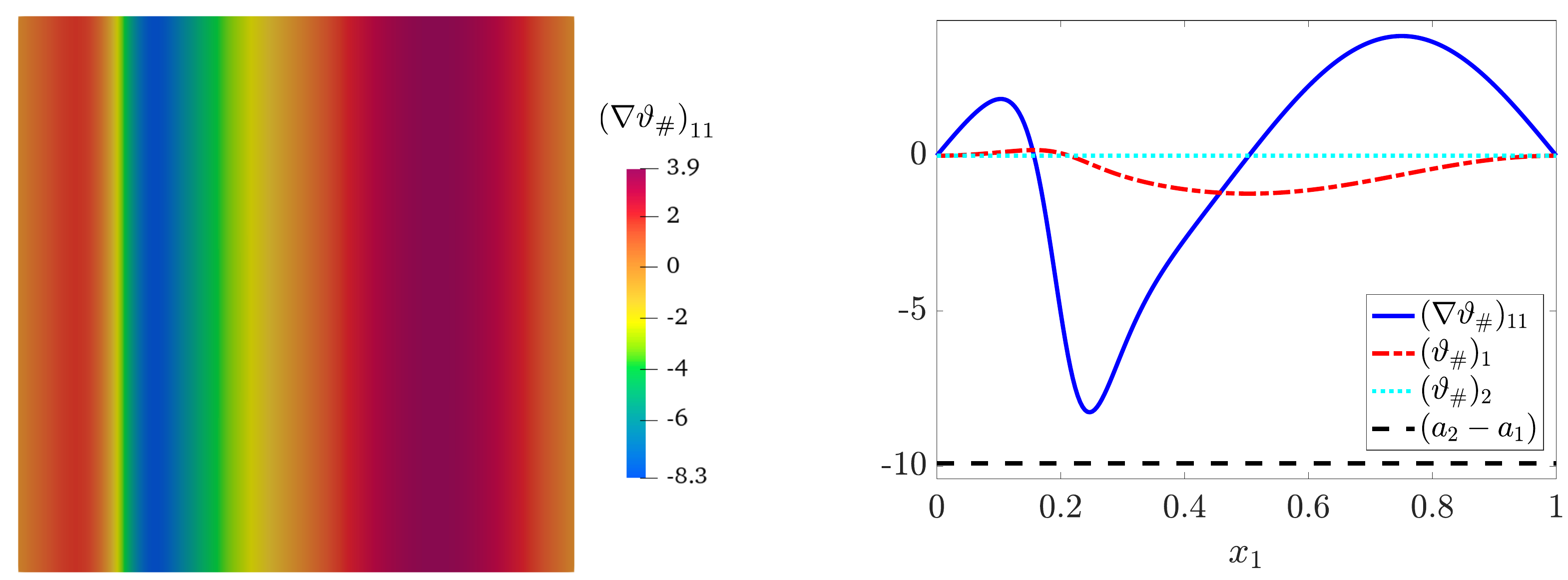}
		\caption{$F_0 = \diag(a_1,a_2),\ a_1=10,\ a_2=1/10$}
	\end{subfigure}
	\caption{Neural network minimizers of the energy $\Wmp$ under periodic boundary conditions with two different $F_0$. (Left) The microstructures are visualized using $(\nabla\vartheta_\#)_{11}$. (Right) $(\nabla\vartheta_\#)_{11}$, $(\vartheta_\#)_{1}$, and $(\vartheta_\#)_{2}$ are plotted against $x_1$ for constant $x_2=0.5$. The relevance of $a_2-a_1$ is discussed in Section \ref{sec:smoothLaminates}.\label{fig:numericsPeriodic}
	}  
\end{figure}
%
%
%
%
%
%%%%%%%%%%%%%%%%%%%%%%%%%%%%%%%%%%%%%%%%%%%%%%%%%%%%%%%%%
\subsection{Smooth laminates}\label{sec:smoothLaminates}
Guided by the numerical findings of the previous section, it turned out that the smooth laminates can be explained analytically as well.

\begin{lemma}\label{lemma:smoothLaminates}
	Let $\Omega=[0,1]^2$ be the unit square and consider \mbox{$\Wmp(F)=\frac{\lmax}{\lmin}+2\.\log\lmin$} with the ordered singular values $\lmax\geq\lmin$ of $F\in\GLp(2)$. For any homogeneous deformation \mbox{$\varphi_0(x)=F_0\.x$} with $F_0=\diag(a_1,a_2)$ and $a_1\geq a_2>0$, the elastic energy \mbox{$I(\varphi)=\int_\Omega\Wmp(\nabla\varphi)\,\dx$} is equal to $I(\varphi_0)$ for all periodical deformations $\varphi(x)=F_0\.x+\vartheta_\#(x)$ of the type
	\begin{equation}
		\vartheta_\#(x_1,x_2)=\matr{f_\#(x_1)\\0}\qquad\text{with}\qquad f_\#'(x)\geq a_2-a_1\quad\forall\;x\in [0,1]\,.\label{eq:periodicBounderaySolution}
	\end{equation}
\end{lemma}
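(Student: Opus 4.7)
The plan is to reduce the statement to a direct calculation, using the very special structure of the ansatz \eqref{eq:periodicBounderaySolution}: because $\vartheta_\#$ depends only on $x_1$ and only its first component is nonzero, the deformation gradient remains diagonal, which makes the singular values (and hence $\Wmp$) trivial to read off.

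First I would compute
\[
\nabla\varphi(x_1,x_2) = F_0 + \nabla\vartheta_\#(x_1,x_2) = \begin{pmatrix} a_1 + f_\#'(x_1) & 0 \\ 0 & a_2 \end{pmatrix}.
\]
The key observation is then that the hypothesis $f_\#'(x_1) \geq a_2 - a_1$ guarantees $a_1 + f_\#'(x_1) \geq a_2 > 0$, so this matrix lies in $\GLp(2)$ with singular values correctly ordered as $\lambdamax = a_1 + f_\#'(x_1)$ and $\lambdamin = a_2$. This is the only place where the constraint on $f_\#'$ is needed — it ensures we do not accidentally swap the roles of $\lambdamax$ and $\lambdamin$ (which would make the energy expression qualitatively different).

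Substituting into the explicit formula from \eqref{eq:WmpinK} gives
\[
\Wmp(\nabla\varphi(x)) = \frac{a_1 + f_\#'(x_1)}{a_2} + 2\log a_2,
\]
which depends only on $x_1$. Integrating over $\Omega = [0,1]^2$ yields
\[
I(\varphi) = \frac{a_1}{a_2} + 2\log a_2 + \frac{1}{a_2}\int_0^1 f_\#'(x_1)\,dx_1.
\]
The final step is to invoke the periodicity of $\vartheta_\#$: since $f_\#(0) = f_\#(1)$ (part of the periodic boundary conditions \eqref{eq:periodicBounderayConditions}), the remaining integral vanishes and we obtain $I(\varphi) = \frac{a_1}{a_2} + 2\log a_2 = \Wmp(F_0) = I(\varphi_0)$.

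There is essentially no hard step here — the statement is an identity that drops out once one realizes that rank-one perturbations in the direction $e_1 \otimes e_1$ keep the deformation gradient diagonal and that the linear part of $\Wmp$ in $\lambdamax$ interacts with the periodicity of $f_\#$ to cancel exactly. The only subtlety worth stressing is the role of the bound $f_\#'(x) \geq a_2 - a_1$: it is sharp in the sense that it is exactly the threshold at which $\lambdamax$ and $\lambdamin$ exchange roles, beyond which the computation would have to be redone with the opposite ordering and the cancellation would no longer be automatic.
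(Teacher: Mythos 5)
Your proof is correct and follows essentially the same route as the paper: compute that $F_0+\nabla\vartheta_\#$ stays diagonal with $\lambdamax=a_1+f_\#'(x_1)\geq a_2=\lambdamin$ (the only use of the bound on $f_\#'$), substitute into $\Wmp$, and let the periodicity condition $f_\#(0)=f_\#(1)$ annihilate the integral of $f_\#'$. No gaps; your remark on the sharpness of the ordering constraint matches the paper's implicit use of it.
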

\begin{proof}
	Let $\vartheta_\#$ be as in \eqref{eq:periodicBounderaySolution}. We find that
	\[
		F=F_0+\nabla\vartheta_\#(x)=\matr{a_1+f'_\#(x_1)&0\\0&a_2}
	\]
	is diagonal with $a_1+f'_\#(x_1)\geq a_2$. This implies $\lmax=a_1+f'_\#(x_1)$ and $\lmin=a_2$. Periodic boundary conditions as defined by \eqref{eq:periodicBounderayConditions} imply $f(0)=f(1)$ as well as $f'(0)=f'(1)$. Thus
	\begin{align}
		I(\varphi)&=\int_\Omega\Wmp(F_0\.x+\vartheta_\#(x))\,\dx=\int_\Omega\frac{\lmax}{\lmin}+2\.\log\lmin\,\dx\notag\\
		&=\int_0^1\int_0^1\frac{a_1+f'_\#(x_1)}{a_2}+2\.\log(a_2)\,\dx_1\dx_2=\frac{a_1+f_\#(1)-f_\#(0)}{a_2}+2\.\log(a_2)\\
		&=\frac{a_1}{a_2}+2\.\log(a_2)=\int_0^1\int_0^1\frac{a_1}{a_2}+2\.\log(a_2)\,\dx_1\dx_2=\int_\Omega\Wmp(F_0\.x)\,\dx=I(\varphi_0)\,.\notag\qedhere
	\end{align}
\end{proof}
We also show explicitly that any $\varphi$ as defined in Lemma \ref{lemma:smoothLaminates} is indeed an equilibrium point of $I$ by direct computation. For the corresponding Euler-Lagrange equations we must compute the first Piola-Kirchhoff stress $S_1(F)=D\Wmp(F)$. We start by considering the restriction to deformations of the type \eqref{eq:periodicBounderaySolution} as an a priori constraint and verify that the single resulting reduced Euler-Lagrange equation holds: Since
\begin{equation}
	\dd{x_1}\frac{\partial\Wmp}{\partial f_\#'}=\dd{x_1}\frac{\partial}{\partial f_\#'}\left[\frac{a_1+f'_\#(x_1)}{a_2}+2\.\log(a_2)\right]=\dd{x_1}\frac{1}{a_2}=0\
\end{equation}
in the class of deformations of the type \eqref{eq:periodicBounderaySolution}, all functions are stationary points of this constrained problem. This is a necessary condition for the general problem \cite{agn_voss2018more,agn_voss2021anti}. It remains to show that any such function is also a stationary point of the full Euler-Lagrange equations $S_1(F)=D\Wmp(F)$ as well. For this we identify
\begin{equation}
	\Wmp(F)=\Psi(\K(F))+\log\det F=\K(F)+\sqrt{\K(F)^2-1}-\arcosh\K(F)+\log\det F\,,\label{eq:WmpPsiNotation}
\end{equation}
and start with the first derivative of the nonlinear distortion function $\K(F)$:
\begin{align}
	\iprod{D\K(F),H}&=\iprod{D_F\left[\frac{\norm{F}^2}{2\.\det F}\right],H}=\frac{1}{2}\.\frac{2\.\iprod{F,H}\.\det F-\norm F^2\iprod{\Cof F,H}}{(\det F)^2}\notag\\
	&=\frac{1}{\det F}\.\iprod{F,H}-\frac{\norm F^2}{2\det F}\.\iprod{\frac{\det F\.F^{-T}}{\det F},H}=\iprod{\frac{1}{\det F}\.F-\K(F)\.F^{-T},H}.
\end{align}
Furthermore, using the notation from \eqref{eq:WmpPsiNotation}, we make use of
\begin{align}
	\Psi(t)&=t+\sqrt{t^2-1}-\arcosh t\,,\notag\\
	\Psi'(t)&=1+\frac{t}{\sqrt{t^2-1}}-\frac{1}{\sqrt{t^2-1}}=1+\sqrt{\frac{t-1}{t+1}}\,,\\
	\Psi''(t)&=\frac{1}{2}\.\sqrt{\frac{t+1}{t-1}}\frac{t+1-(t-1)}{(t+1)^2}=\frac{1}{(t+1)^2}\.\sqrt{\frac{t+1}{t-1}}\notag
\end{align}
as well as
\begin{align}
	\iprod{D[\log\det F],H}&=\iprod{\frac{1}{\det F}\.\Cof F,H}=\iprod{F^{-T},H}\,,\\
	D\left[F^{-T}\right].H&=\left(D\left[F\inv\right].H\right)^T=\left(-F\inv H\.F\inv\right)^T=-F^{-T}H^TF^{-T}.
\end{align}
Altogether, we find
\begin{align}
	\iprod{S_1(F),H}&=\iprod{D\Wmp(F),H}=f'(\K(F))\.\iprod{D\K(F),H}+\iprod{D[\log\det F],H},\notag
\intertext{which implies} S_1(F)&=\left(1+\sqrt{\frac{\K(F)-1}{\K(F)+1}}\right)\left(\frac{1}{\det F}\.F-\K(F)\.F^{-T}\right)+F^{-T}\label{eq:WmpS1}\\
	&=\frac{2\.K}{(K+1)\.\det F}\.F-\frac{K\.(K-1)}{K+1}\.F^{-T},\qquad K=\frac{\lmax}{\lmin}.\notag
\end{align}
Next, we insert deformations of the type \eqref{eq:periodicBounderaySolution} as an a posteriori constraint \cite{agn_voss2018more}:
\begin{align}
	F&=\matr{a_1+f'_\#(x_1)&0\\0&a_2},& F^{-T}&=\matr{\frac{1}{a_1+f'_\#(x_1)}&0\\0&\frac{1}{a_2}},\notag\\
	K&=\frac{a_1+f'_\#(x_1)}{a_2}\,,&\det F&=(a_1+f'_\#(x_1))\.a_2\,.
\end{align}
Thus we arrive at
\begin{align}
	S_1(F)=\matr{\frac{1}{a_2}&0\\0&\frac{a_1-2\.a_2+f'_\#(x_1)}{a_2^2}}.
\end{align}
The full Euler-Lagrange equations are therefore satisfied since
\[
	\Div D\Wmp(F)=\matr{\frac{\partial}{\partial x_1}\frac{1}{a_2}\\\frac{\partial}{\partial x_2}\frac{a_1-2\.a_2+f'_\#(x_1)}{a_2^2}}=\matr{0\\0}.\qedhere
\]
Note that the case $f_\#\equiv 0$ corresponds to the homogeneous solution and that this is the only superposition which also complies with Dirichlet boundary conditions given by $F_0$. As deformations of the type \eqref{eq:periodicBounderaySolution} only allow for a smooth displacement in one coordinate direction, we refer to them as \emph{smooth laminates}, cf.\ Figure \ref{fig:numericsPeriodic}.

\begin{figure}[h!]
  	\centering
  	\begin{minipage}[b]{.49\linewidth}
    \centering
    \begin{tikzpicture}
		\begin{axis}[
        axis x line=middle,axis y line=middle,
        x label style={at={(current axis.right of origin)},anchor=north, below},
        xlabel=$x$, ylabel={},
        xmin=-0.1, xmax=1.1,
        ymin=-0.15, ymax=1.5,
        width=1.1\linewidth,
        height=0.9\linewidth,
        ytick={0.53,0.77,1.3},
        yticklabels={$a_1-a_2$,$a_2$,$a_1$},
	    xtick={1}
        ]
        \addplot[black, dashed][domain=0:1, samples=\sample]{0.77*x} node[above,pos=0.8,yshift = 0.1cm] {$a_2\.x$};
        \addplot[black, dashed][domain=0:1, samples=\sample]{0.77*(x-1)+1.3} node[above,pos=0.8,xshift=-0.8cm,yshift = 0.1cm] {$a_2\.(x-1)+a_1$};
        \addplot[red, smooth][domain=0:1, samples=\sample]{1.3*((0.4-(x-0.5)^2)/8*sin(360*x)+x)} node[above,pos=0.5,yshift=0.4cm] {$f_\#(x)+a_1\.x$};
        \addplot[udedarkblue, dashed][domain=0:1, samples=\sample]{1.3*((0.4-(x-0.5)^2)/8*sin(360*x)+x)-0.77*x} node[above,pos=0.55] {$f_\#(x)+(a_1-a_2)\.x$};
        \addplot[udedarkblue, smooth][domain=0:1, samples=\sample]{1.3*((0.4-(x-0.5)^2)/8*sin(360*x)+x)-1.3*x} node[above,pos=0.7,yshift = 0.2cm] {$f_\#(x)$};
        \end{axis}
  \end{tikzpicture}
  \end{minipage}
  \begin{minipage}[b]{.49\linewidth}
  \centering
  \begin{tikzpicture}
    	\def\Aconstant{(1.3-0.77)/(2*1.3-0.77)}
		\begin{axis}[
        axis x line=middle,axis y line=middle,
        x label style={at={(current axis.right of origin)},anchor=north, below},
        xlabel=$x$, ylabel={},
        xmin=-0.1, xmax=1.1,
        ymin=-0.15, ymax=1.5,
        width=1.1\linewidth,
        height=0.9\linewidth,
        ytick={0.53,0.77,1.3},
        yticklabels={$a_1-a_2$,$a_2$,$a_1$},
	    xtick={1}
        ]
        \addplot[black, dashed][domain=0:1, samples=\sample]{0.77*x} node[above,pos=0.85,yshift = 0.1cm] {$a_2\.x$};
        \addplot[black, dashed][domain=0:1, samples=\sample]{0.77*(x-1)+1.3} node[above,pos=0.8,xshift=-0.8cm,yshift = 0.1cm] {$a_2\.(x-1)+a_1$};
        \addplot[red, smooth][domain=0:\Aconstant, samples=\sample]{1.3*2*x};
        \addplot[red, smooth][domain=\Aconstant:1, samples=\sample]{0.77*(x-1)+1.3} node[below,pos=0.7,yshift=-0.4cm] {$f_\#(x)+a_1\.x$};
        \addplot[udedarkblue, dashed][domain=0:\Aconstant, samples=\sample]{(1.3*2-0.77)*x};
        \addplot[udedarkblue, dashed][domain=\Aconstant:1, samples=\sample]{(1.3-0.77} node[above,pos=0.35] {$f_\#(x)+(a_1-a_2)\.x$};
        \addplot[udedarkblue, smooth][domain=0:\Aconstant, samples=\sample]{1.3*x};
        \addplot[udedarkblue, smooth][domain=\Aconstant:1, samples=\sample]{(0.77-1.3)*(x-1)} node[above,pos=0.7,yshift = 0.1cm] {$f_\#(x)$};
        \end{axis}
  \end{tikzpicture}
  \end{minipage}
  \caption{Left: Visualization of $f_\#(x)+a_1\.x\col[0,1]\to[0,a_1]$ (red line) of the type \eqref{eq:periodicBounderaySolution}, i.e.\ $f'_\#(x)\geq a_2-a_1$ for all $x\in[0,1]$. Right: Visualization of a two-phase laminate as described in Remark \eqref{remark:periodicLaminate}.}\label{fig:laminate}
\end{figure}
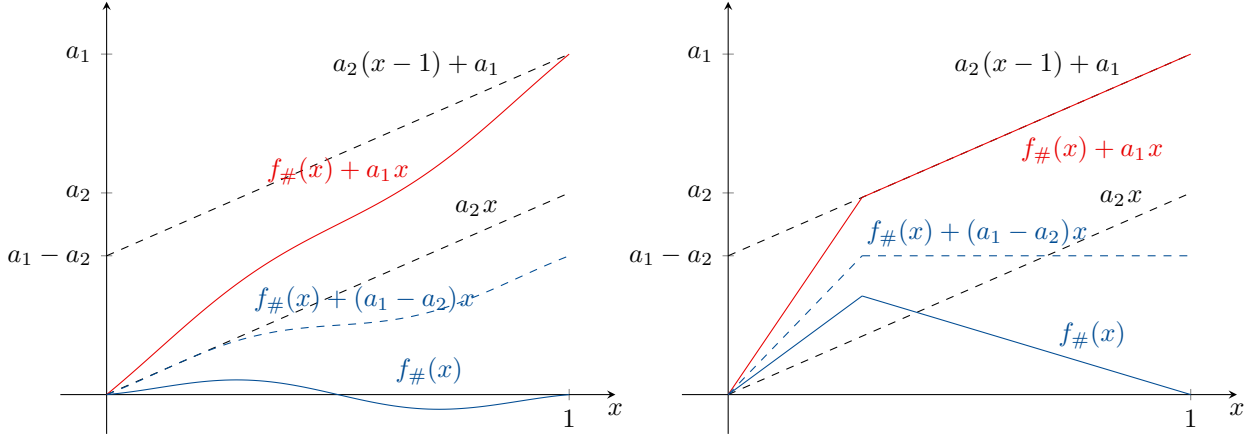

\begin{remark}\label{corollary:smoothLaminates}
	We can construct a continuous map remaining in the class of smooth laminates (i.e.\ which only contains stationary points of $\Wmp$) from one candidate $f_\#$ to the identity $f_\#\equiv 0$ and from there to another solution $\ftilde_\#$ at constant energy value $I(\varphi)$. Therefore, neither the homogeneous solution nor any of these smooth laminates are stable. A similar argument holds for the radially symmetric deformations considered in \cite{agn_voss2021morrey}.
\end{remark}
\begin{remark}\label{remark:periodicLaminate}
	If we allow $\vartheta_\#$ to be non-smooth (cf.\ Figure \ref{fig:laminate}) we can obtain various (classical) lamination patterns as well. For example, as a limit of a smooth laminate microstructure of the type \eqref{eq:periodicBounderaySolution}, we may consider
	\begin{equation}
		f_\#(x)=\begin{cases}a_1\.x &\caseif x\in\bigl[0,\frac{a_1-a_2}{2\.a_1-a_2}\bigr],\\[
		0.5em]
		(a_2-a_1)\.(x-1) &\caseif x\in\bigl[\frac{a_1-a_2}{2\.a_1-a_2},1\bigr],\end{cases}
	\end{equation}
	which yields the simple two-phase laminate
	\begin{align}
		F_1&=\matr{2\.a_1 &0\\0& a_2},\qquad F_2=\matr{a_2 &0\\0& a_2},\\[.7em]
		F_0&=\frac{a_1-a_2}{2\.a_1-a_2}\.F_1+\frac{a_1}{2\.a_1-a_2}\.F_2=\matr{a_1 &0\\0& a_2}.\notag
	\end{align}
	Recall that we can only construct laminations that satisfy the constraint $f_\#'(x)+a_1\geq-a_2$, cf.\ Lemma \ref{lemma:smoothLaminates}.
\end{remark}
%
%
%
%
%
%%%%%%%%%%%%%%%%%%%%%%%%%%%%%%%%%%%%%%%%%%%%%%%%%%%%%%%%%%%%%%%%%
\section{Relaxation to non-gradient fields with a Curl-based penalty}\label{sec:Curl}

While the smooth laminates discussed in Section \ref{sec:periodic} as well as the contracting deformations in \cite{agn_voss2021morrey} (cf.\ Appendix \ref{sec:RadialEulerLagrange}) allow for non-homogeneous deformations whose energy level for $\Wmp$ is as low as the homogeneous one, we have yet failed to find strictly lower energy values.

For a new attempt, we expand our set of possible deformations $\varphi$. To this end, we extend the energy functional from gradient fields $F=\nabla\varphi$ to more general matrix fields $P$, but control the distance of $P$ to the set of compatible mappings (i.e.\ gradient fields) with the $\Curld$ operator. This approach is commonly used when dealing with local dislocations in gradient plasticity with plastic spin \cite{agn_munch2008curl,agn_neff2009notes} and with relaxed micromorphic continua \cite{agn_neff2014unifying}. We expect to obtain new microstructures numerically and, in order to gain additional insight into our original variational problem, we will observe the material behavior as a function of the weight parameter on the penalty term $\Curld P$. The hope is that when such new microstructures are used as initial iterates, the optimization algorithm of Section \ref{sec:numericOliver} will converge to an energy level below the homogeneous one.

More specifically, instead of the classical minimization problem in the space $W^{1,2}(\Omega,\R^2)$, i.e.\
\begin{equation}
	I_1(\varphi)\colonequals\int_\Omega\Wmp(\nabla\varphi)\,\dx\to\min\,,\qquad\varphi(x)|_{\partial\Omega}=F_0.x\label{eq:curlI1}
\end{equation}
with $\Wmp$ as in \eqref{eq:WmpinK}, we consider
\begin{equation}
	I_2(P)\colonequals\int_\Omega\Wmp(P)+\frac{L_c^2}{2}\.\norm{\Curld P}^2\,\dx\to\min\,,\qquad P.\tau|_{\partial\Omega}=F_0.\tau\.,\label{eq:curlI2}
\end{equation}
in the larger space $H(\Curl)$. Here, the vector field $\tau$ is the unit tangent vector to $\partial\Omega$\footnote{In the three-dimensional case we can use $P\times\nu|_{\partial\Omega}=F_0\times\nu$ with $\nu\in\R^3$ as the unit normal vector to $\partial\Omega$ as equivalent alternative, since $\displaystyle(P-F_0)\times\nu|_{\partial\Omega}=0\iff(P-F_0).\tau_{1/2}|_{\partial\Omega}=0$ where $\tau_1,\tau_2$ are the tangent vectors to $\partial\Omega$.} and $L_c\in\Rp$ is a penalty parameter. The planar operator $\Curld$ is discussed in Appendix \ref{sec:appendixCurl}. Note that the mapping $P\col\Omega\subset\R^2\to\R^{2\times 2}$ does not need to be a gradient field, i.e.\ $P$ may be \emph{incompatible}, but that
\begin{equation}
	\inf I_2(P)\leq \inf I_2(\nabla\varphi)=\inf I_1(\varphi) \qquad\text{and}\qquad\lim_{L_c\to\infty} I_2(P) = +\infty \quad\text{if $P$ is not a gradient field}\,,
\end{equation}
since we are considering contractible domains only, and on such domains $\Curld P=0$ if and only if $P=\grad\varphi$ for some weakly differentiable $\varphi$. 

We minimize the functional $I_2$ of \eqref{eq:curlI2} numerically by approximating the matrix field $P$ by piecewise polynomial functions $P_n$ such that each row of $P_n$ is a first-order N\'ed\'elec finite element of the first kind \cite{kirby2012common} which are elements of the space $H(\Curl)$ by construction. As the domain, we use the unit ball $\Omega=B_1(0)$ and the same grid as in Section \ref{sec:numericNonElliptic}. The algorithm used to minimize the discretized functional is the same trust-region multigrid algorithm as in Section \ref{sec:numericNonElliptic} as well.

\begin{figure}[h!]
	\centering
	\includegraphics[width=.47\textwidth, trim = 5.3cm 0.2cm 0.3cm 3.5cm, clip]{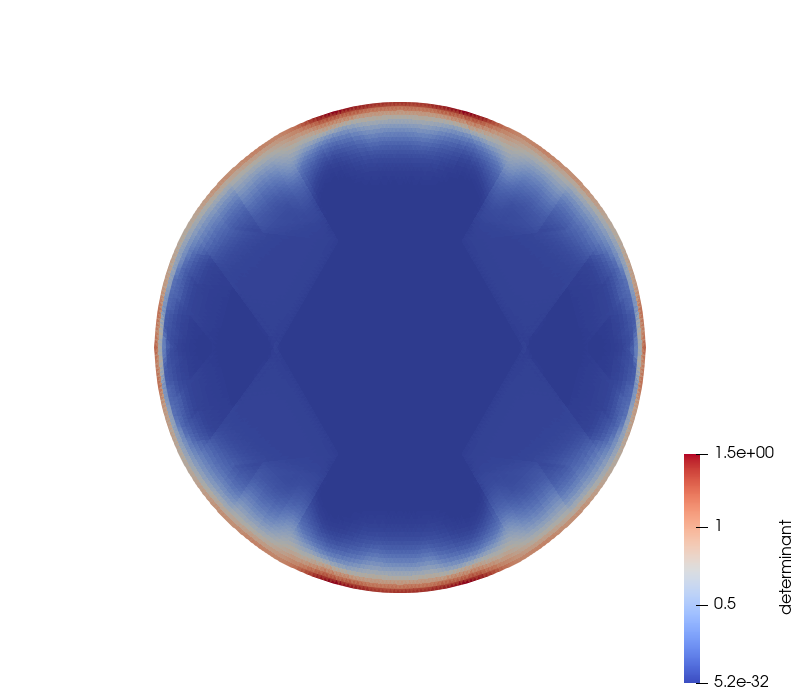}
	\hfill
	\includegraphics[width=.47\textwidth, trim = 5.3cm 0.2cm 0.3cm 3.5cm, clip]{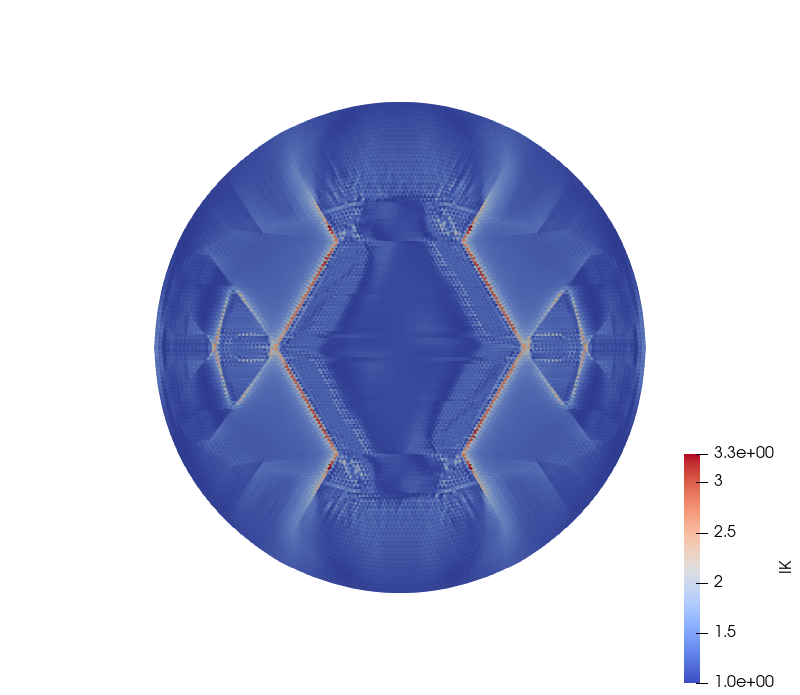}
	\caption{Matrix field $\Phat$ computed for $I_2(P)\to\min$ and $L_c=0.5$ with a starting deformation of $F_0=\diag\bigl(\sqrt{2},\frac{1}{\sqrt{2}}\bigr)$. The colors encode the determinant (left) and the distortion $\K(\Phat)=\frac{1}{2}\.\frac{\norm\Phat^2}{\det\Phat}$ (right).}\label{fig:Curl1}
\end{figure}

We consider the minimization problem \eqref{eq:curlI2} for different parameters $L_c$ and choose the boundary deformation $F_0=\diag\bigl(\sqrt{2},\frac{1}{\sqrt{2}}\bigr)$. We must note that for the range of parameters $L_c$ considered here, the trust-region algorithm of Section \ref{sec:numericOliver} would not converge to a stationary point. Rather, due to the severe degeneracy of the energy function \eqref{eq:curlI2} for low $L_c$, the solver would get stuck eventually. In such situations, the trust-region control would decrease the trust-region radius further and further, without ever finding an acceptable new iterate. In view of the finite-precision arithmetic used in actual simulation, this is not a contradiction to the trust-region theory, which claims that such a new iterate will always be found.

All figures show the last step computed. For $L_c=0.5,1,2$ we observe different non-trivial microstructures with significantly lower energy value than the homogeneous solution, cf.\ Figure \ref{fig:Curl1}. The larger $L_c$, the closer the energy approaches the energy of the homogeneous state.

After calculating $\Phat$ from $I_2(P)\to\min$ by the above numerical method, we construct a deformation field $\widehat\varphi\col\Omega\to\R^2$ with a deformation gradient close to $\Phat$ by computing
\begin{equation}
	\widehat\varphi\colonequals\argmin_\varphi\norm{\nabla\varphi-\Phat}_{L^2(\Omega)}^2=\int_\Omega\norm{\nabla\varphi-\Phat}^2\,\dx\,,\qquad\varphi(x)|_{\partial\Omega}=F_0.x\,,
\end{equation}
in the space of first-order Lagrange finite elements. While the compatible deformations $\widehat\varphi$ all look similar to their corresponding incompatible matrix fields $\Phat$, cf.\ Figure \ref{fig:Curl2}, the energy value $I_1(\widehat\varphi)$ is always higher then the homogeneous one $I_1(\varphi_0)$. Starting the minimization algorithm for $I_1$ from $\widehat\varphi$ always results in the homogeneous configuration.

\begin{figure}[h!]
	\centering
	\includegraphics[width=.47\textwidth, trim = 5.3cm 0.2cm 0.3cm 3.5cm, clip]{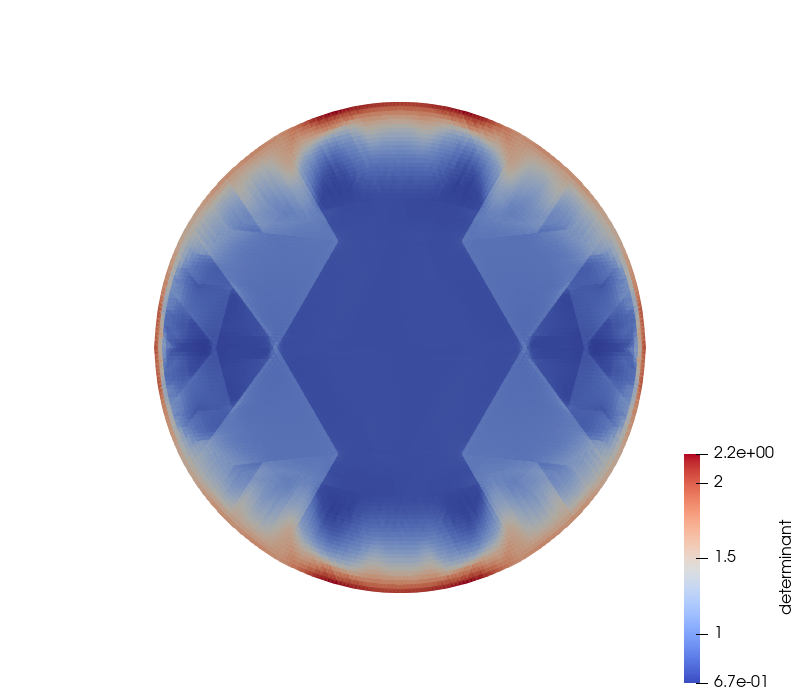}
	\hfill
	\includegraphics[width=.47\textwidth, trim = 5.3cm 0.2cm 0.3cm 3.5cm, clip]{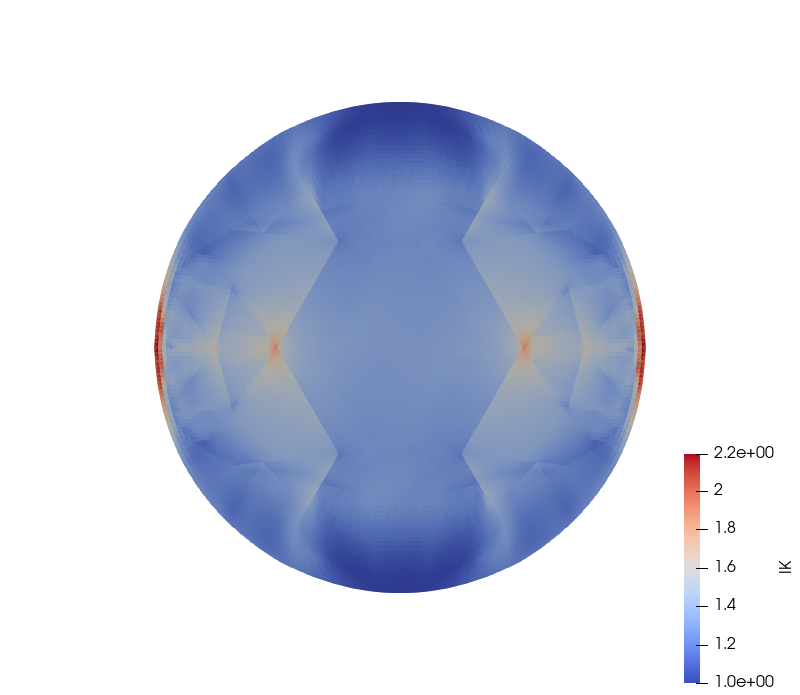}
	\caption{Compatible deformation $\widehat\varphi$ from $\norm{\nabla\varphi-\Phat}_{L^2(\Omega)}\to\min$ for the matrix field $\Phat$ shown in the Figure \ref{fig:Curl1}. Colors show determinant (left) and the distortion $\K(\nabla\widehat\varphi)=\frac{1}{2}\.\frac{\norm{\nabla\widehat\varphi}^2}{\det\nabla\widehat\varphi}$ (right).}
	\label{fig:Curl2}
\end{figure}

As a further attempt to reach low values of $I_2(P)$, we also started the optimization from a non-compatible matrix field instead of the homogeneous deformation gradient $P=F_0$ we used for the numerical methods before. For this we chose a checkerboard pattern with squares of size $\frac1b\times\frac1b$ and alternately set the values $F_1=(1-\delta)\.F_0$ and $F_2=(1+\delta)\.F_0$ with $\delta=0.5$, cf.\ Figure \ref{fig:Curl3}.

\begin{figure}[h!]
	\centering
	\includegraphics[width=.47\textwidth, trim = 5.3cm 0.2cm 0.3cm 3.5cm, clip]{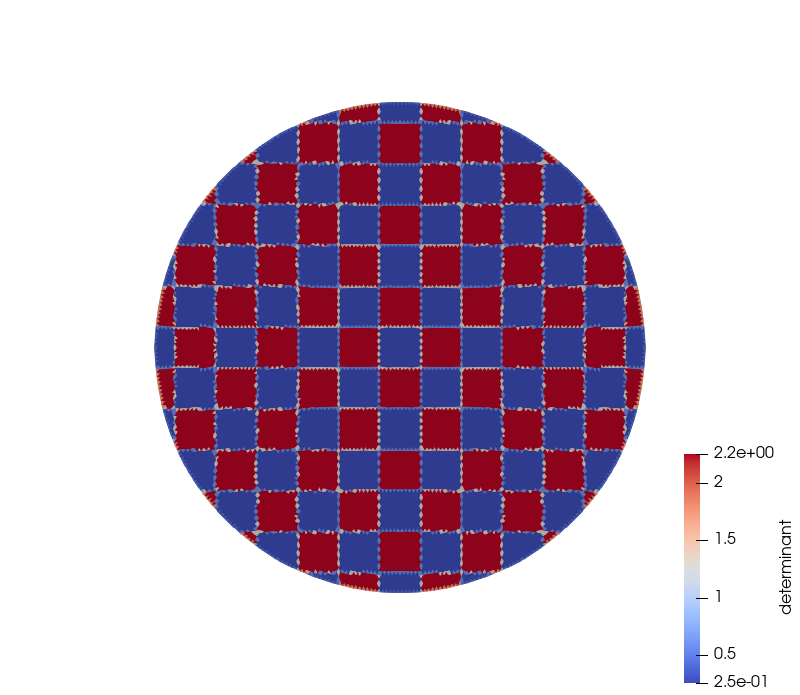}
	\hfill
	\includegraphics[width=.47\textwidth, trim = 5.3cm 0.2cm 0.3cm 3.5cm, clip]{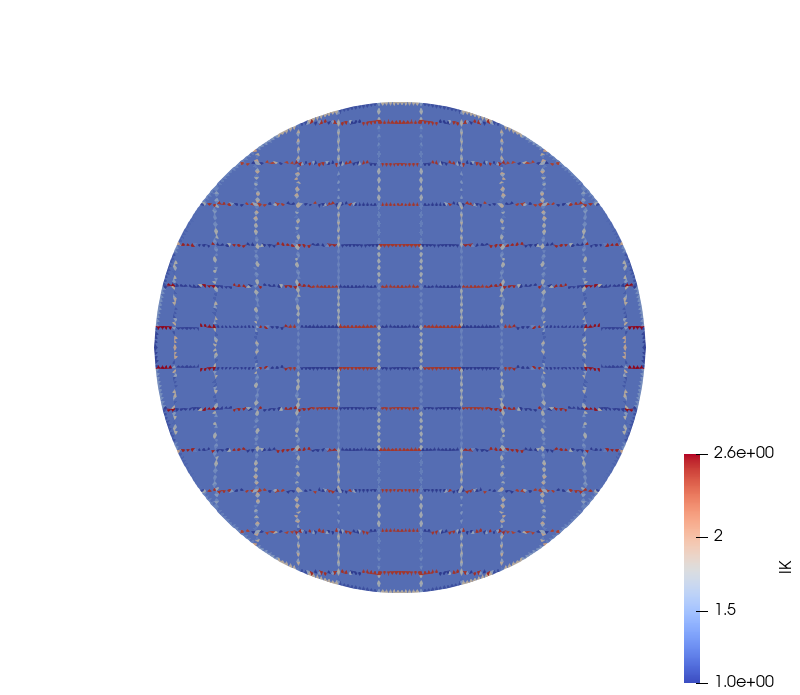}
	\caption{Checkerboard pattern used as initial iterate. The pattern is not a gradient and combines an inhomogeneous determinant with a constant distortion $\K$.}
	\label{fig:Curl3}
\end{figure}

This pattern has a lower energy value without considering the regularization term $\norm{\Curl P}^2$ as it only activates the concave volumetric part $\log\det F$ of $\Wmp(F)$. As shown in Figure \ref{fig:Curl4}, we indeed arrive at different microstructures. However, the compatible deformations obtained from minimizing $	\norm{\nabla\varphi-\Phat}_{L^2(\Omega)}^2$ again have higher energy values than the homogeneous state. Furthermore, minimizing the original energy $I_1$ from there lead back to the homogeneous configuration once again.

\begin{figure}[h!]
	\centering
	\includegraphics[width=.47\textwidth, trim = 5.3cm 0.2cm 0.3cm 3.5cm, clip]{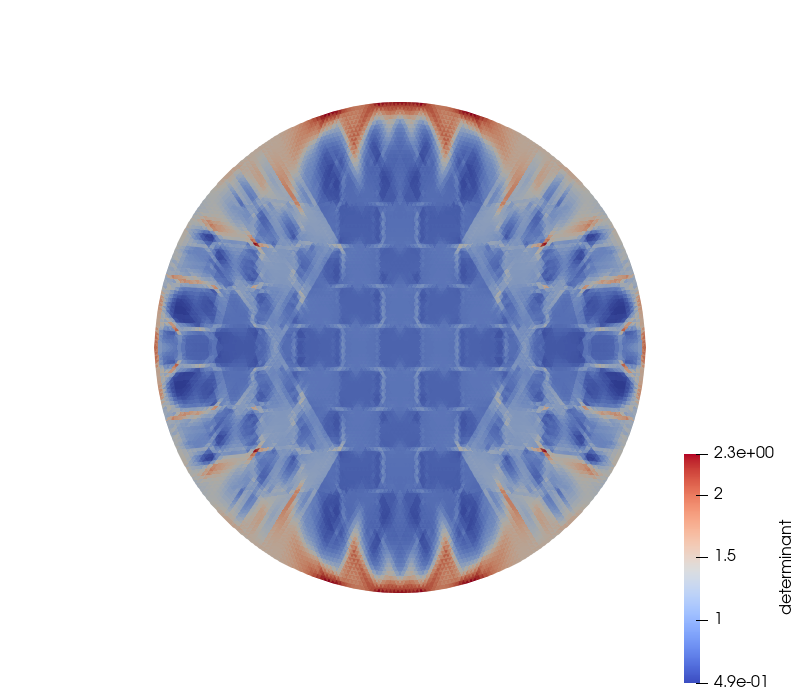}
	\hfill
	\includegraphics[width=.47\textwidth, trim = 5.3cm 0.2cm 0.3cm 3.5cm, clip]{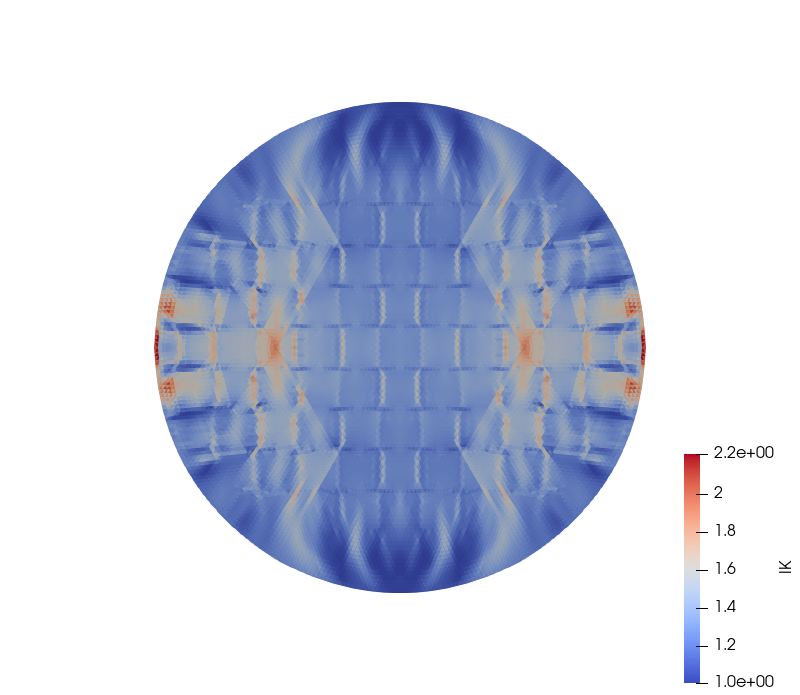}
	\includegraphics[width=.47\textwidth, trim = 5.3cm 0.2cm 0.3cm 3.5cm, clip]{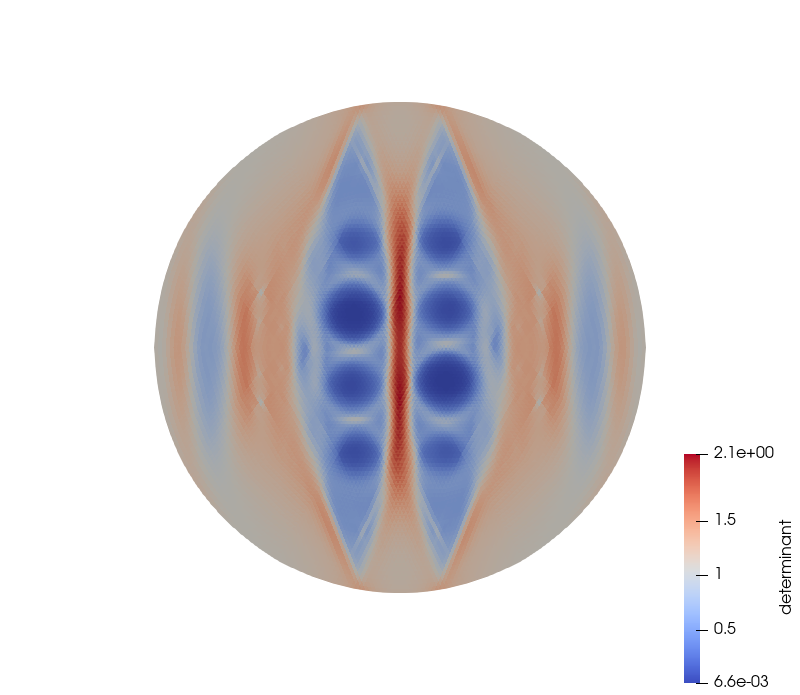}
	\hfill
	\includegraphics[width=.47\textwidth, trim = 5.3cm 0.2cm 0.3cm 3.5cm, clip]{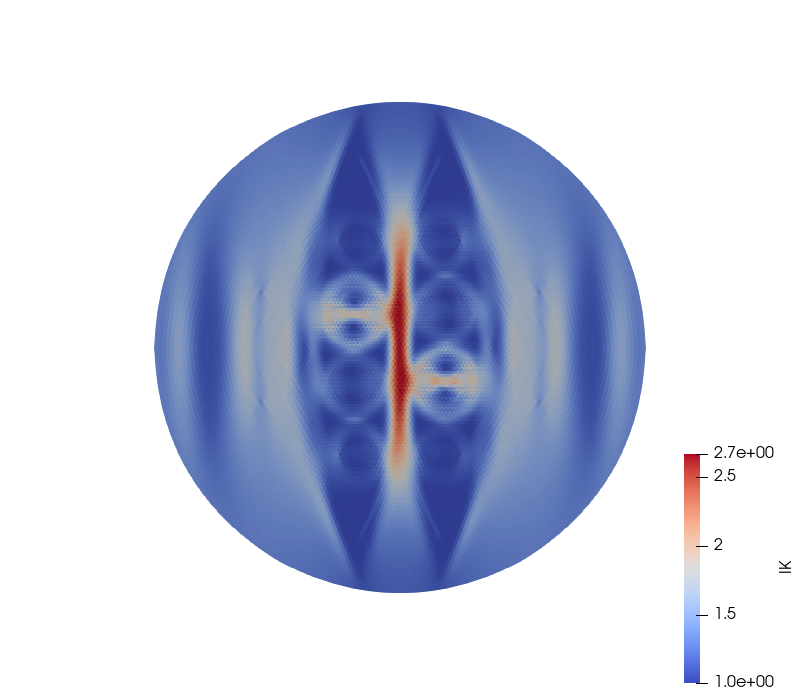}
	\caption{
		Two compatible deformations (for $L_c=0.5$ and $L_c=2$, respectively) starting from a checkerboard pattern for $F_0=\diag\bigl(\sqrt{2},\frac{1}{\sqrt{2}}\bigr)$. This pattern remains visible for the first calculation because as explained in the text, the trust-region algorithm did not find a local minimizer for $I_2(P)\to\min$.}
	\label{fig:Curl4}
\end{figure}
%
%
%
%
%
%%%%%%%%%%%%%%%%%%%%%%%%%%%%%%%%%%%%%%%%%%%%%%%%%%%%%%%%%%%%%%%%%
\section{Gradient Young measures and laminates}\label{sec:guerra}

More recently, the problem of Morrey's conjecture has been approached from the point of view of \emph{gradient Young measures} and \emph{laminates} \cite{kinderlehrer1991caracterisationCR,kinderlehrer1991characterizations,kinderlehrer1994gradient,
guerra2020numerical,guerra2019extremal,harris2018two}. In the following, we give a brief overview of this alternative approach and its relation to the optimization methods used above.

Throughout this section, let $\Omega=[0,1]^2$ denote the unit square.\footnote{Note that the term $\meas{\Omega}=1$ will often be omitted in the following, but would need to be taken into account carefully for a more general choice of $\Omega\subset\R^2$.} In order to positively answer Morrey's conjecture, we would need to find a rank-one convex function $W\col\R^{2\times2}\to\R$ that is not quasiconvex. More explicitly, this task can be stated as follows:
\begin{itemize}
	\item[(I)]
		Find a rank-one convex function $W\col\R^{2\times2}\to\R$, a Lipschitz mapping $\varphi\col\Omega\to\R^2$ and $F_0\in\R^{2\times2}$ with $\varphi(x)=F_0.x$ for all $x\in\partial\Omega$ such that
		\begin{equation}\label{eq:morreyClassical}
			\int_\Omega W(\grad\varphi(x))\,\dx < W(F_0)
						\,.
		\end{equation}
\end{itemize}
In particular, such a full solution to Morrey's problem requires both an energy function $W\col\R^{2\times2}\to\R$ and a mapping $\varphi\col\Omega\to\R^2$.

The classical approach to this problem, as followed in the previous sections, consists of choosing a plausible candidate $W$ first before trying to find a mapping $\varphi$ that satisfies \eqref{eq:morreyClassical} in order to prove the non-quasiconvexity of $W$. However, recent investigations of Morrey's conjecture have often taken a slightly different point of view, emphasizing the search for a suitable mapping $\varphi$ instead. More specifically, these approaches try to establish the existence of a Lipschitz mapping $\varphi\col\Omega\to\R^2$ such that the pushforward measure induced by $\grad\varphi$ violates a Jensen-type inequality.

This change in perspective is mainly based on the seminal results by Kinderlehrer and Pedregal \cite{kinderlehrer1991caracterisationCR,kinderlehrer1991characterizations,kinderlehrer1994gradient} on the relation between quasiconvexity and gradient Young measures. In particular, these results allow for Morrey's conjecture (I) to be rephrased in terms of properties of probability measures.
%
%
%
%%%%%%%%%%%%%%%%%%%%%%%%%%%%%%%%%%%%%%%%%%%%%%%%%%%%%%
\subsection{Equivalent formulations of Morrey's conjecture}

First, we consider the following rephrasing, which is obtained by a simple substitution on the left-hand side of \eqref{eq:morreyClassical}.
\begin{itemize}
	\item[(II)]
		Find a rank-one convex function $W\col\R^{2\times2}\to\R$, a Lipschitz mapping $\varphi\col\Omega\to\R^2$ and $F_0\in\R^{2\times2}$ with $\varphi(x)=F_0.x$ for all $x\in\partial\Omega$ such that
		\[
			\int_{\R^{2\times2}} W(A)\,\intd\nu_{\varphi}(A) < W(F_0)
			\,,
		\]
		where $\nu_{\varphi}$ denotes the pushforward measure of the Lebesgue measure with respect to $\grad\varphi$, i.e.\
		\[
			\nu_{\varphi}(M)=\meas{\{x\in\Omega\setvert \grad\varphi(x)\in M\}}\qquad\text{for any measurable}\quad M\subset\R^{2\times2}\,.
		\]
\end{itemize}
This phrasing of Morrey's problem in terms of \emph{probability measures}\footnote{Note that $\nu_{\varphi}$ is indeed a probability measure on $\R^{2\times2}$ due to the choice $1=\meas{\Omega}=\nu_{\varphi}(\R^{2\times2})$.} is closely related to its formulation in terms of \emph{homogeneous gradient Young measures}. The exact relation between pushforward measures of gradients and homogeneous gradient Young measures can be established by the \emph{Averaging Theorem} \cite[Theorem 2.1]{kinderlehrer1991characterizations},
which directly implies the following lemma as a corollary.
\begin{lemma}{{Cf.~\cite{kinderlehrer1991characterizations}}}
\label{lemma:averaging}
	Let $\varphi\in\sob1\infty(\Omega;\R^2)$ with $\varphi(x)=F_0.x$ for all $x\in\partial\Omega$. Then $\nu_{\varphi}$ is a homogeneous gradient Young measure on $\Omega$ with barycenter $\bary{\nu}_{\varphi}=F_0$.
\end{lemma}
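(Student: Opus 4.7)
The plan is to verify the two parts of the claim separately: first that the barycenter of $\nu_\varphi$ equals $F_0$, which is a direct computation, and then the more substantive assertion that $\nu_\varphi$ qualifies as a homogeneous gradient Young measure, which rests on the Averaging Theorem cited just before the statement.

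For the barycenter, I would unwind the definition of pushforward: for any continuous $g$ with suitable growth,
\[
	\int_{\R^{2\times 2}} g(A)\,\dnu_\varphi(A) \;=\; \int_\Omega g(\nabla\varphi(x))\,\dx\,.
\]
Applying this to each coordinate function $g(A)=A_{ij}$ gives
\[
	(\bary{\nu}_\varphi)_{ij} \;=\; \int_\Omega \partial_j \varphi_i(x)\,\dx\,.
\]
By the divergence theorem and the affine boundary data $\varphi(x)=F_0.x$ on $\partial\Omega$, this integral equals $(F_0)_{ij}\cdot\meas{\Omega}=(F_0)_{ij}$, giving $\bary{\nu}_\varphi=F_0$ since $\meas{\Omega}=1$.

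For the main part, I would show that $\nu_\varphi$ arises as a homogeneous gradient Young measure by constructing an explicit generating sequence $\{u_k\}\subset\sob{1}{\infty}(\Omega;\R^2)$ via periodic tiling. Concretely, decompose $\Omega=[0,1]^2$ into $k^2$ congruent subsquares $\Omega^{k}_{i,j}$ of side length $1/k$ and, on each such subsquare, define
\[
	u_k(x) \;=\; F_0.x \;+\; \frac{1}{k}\,\bigl[\varphi\bigl(k(x-x^k_{i,j})\bigr)-F_0.\bigl(k(x-x^k_{i,j})\bigr)\bigr]\,,
\]
where $x^k_{i,j}$ is the lower-left corner of $\Omega^k_{i,j}$. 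Because $\varphi-F_0.\,(\cdot)$ vanishes on $\partial\Omega$, the rescaled pieces match along all internal edges, so $u_k$ is Lipschitz on $\Omega$ with $u_k(x)=F_0.x$ on $\partial\Omega$, and its gradient on each tile is simply $\nabla\varphi\bigl(k(x-x^k_{i,j})\bigr)$. Thus $\nabla u_k$ is uniformly bounded and, by the equidistribution of points under rescaling, for every continuous $g$ we have
\[
	\int_\Omega g(\nabla u_k(x))\,\dx \;=\; \int_\Omega g(\nabla \varphi(y))\,\dy \;=\; \int_{\R^{2\times 2}} g(A)\,\dnu_\varphi(A)\,.
\]
A standard localization argument then shows that on every subdomain $\omega\Subset\Omega$, the same identity holds in the limit after normalization by $\meas\omega$, which is precisely the statement that $\{\nabla u_k\}$ generates the spatially constant Young measure $\nu_x\equiv\nu_\varphi$. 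Hence $\nu_\varphi$ is a homogeneous gradient Young measure.

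The main technical obstacle is the localization step: one has to verify that the tiling construction yields a \emph{homogeneous} Young measure in the sense that the limiting distribution of gradient values is the same on every subdomain, not just globally on $\Omega$. This is exactly the content of the Averaging Theorem \cite[Theorem 2.1]{kinderlehrer1991characterizations}, so in a write-up one may either invoke it directly as a black box (as the lemma's attribution suggests) or argue by hand that, because the $k$-scale tiling has vanishing mesh, every subdomain $\omega$ eventually contains arbitrarily many full tiles whose contribution dominates the boundary correction.
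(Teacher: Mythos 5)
Your proposal is correct, and it is somewhat more self-contained than what the paper does: the paper treats the lemma purely as a corollary of the Averaging Theorem of Kinderlehrer and Pedregal \cite[Theorem 2.1]{kinderlehrer1991characterizations}, offering no further argument, whereas you both compute the barycenter directly (via the divergence theorem and the affine boundary data, which is exactly the right calculation, with no growth issues since $\nu_\varphi$ has compact support for $\varphi\in\sob1\infty$) and supply the standard periodic-tiling construction that actually proves this special case of the Averaging Theorem by hand. Your rescaled sequence $u_k$ is well defined and uniformly Lipschitz because the correction $\varphi-F_0.x$ vanishes on tile boundaries, the per-tile change of variables gives the exact identity $\int_\Omega g(\nabla u_k)\,\dx=\int_{\R^{2\times2}}g\,\dnu_\varphi$ for every $k$ (so no limit or ``equidistribution'' is needed for the global identity --- only the localization to subdomains requires $k\to\infty$, where the vanishing mesh makes the partially covered boundary tiles negligible), and this localization is precisely what makes the limiting Young measure spatially constant, i.e.\ homogeneous. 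So the two routes differ only in that the paper invokes the cited theorem as a black box while you reprove the needed instance explicitly; your version buys a self-contained argument at the cost of redoing a standard construction, and your closing remark correctly identifies that one may instead simply cite the Averaging Theorem, which is the paper's choice.
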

By virtue of Lemma \ref{lemma:averaging}, if (II) can be solved, so can the following problem:
\begin{itemize}
	\item[(III)]
		Find a rank-one convex function $W\col\R^{2\times2}\to\R$ and a homogeneous gradient Young measure $\nu$ such that
		\begin{equation}\label{eq:morreyYoungMeasure}
			\int_{\R^{2\times2}} W(A)\,\intd\nu(A) < W(\bary{\nu})
			\,.
		\end{equation}
\end{itemize}
In order to see that (III) is also sufficient for (and thus equivalent to) solving (II), recall that for any homogeneous gradient Young measure $\nu$ there exists a sequence $(\varphi_k)_{k\in\N}\subset\sob1\infty(\Omega;\R^2)$ with affine linear boundary values induced by $F_0=\bary{\nu}$ such that
\[
	\lim_{k\to\infty}
	\int_{\R^{2\times2}} W(A)\,\intd\nu_{\varphi_k}(A)
	=
	\int_{\R^{2\times2}} W(A)\,\intd\nu(A)
\]
for any continuous function $W\col\R^{2\times2}\to\R$. In particular, this implies $\int_{\R^{2\times2}} W(A)\,\intd\nu_{\varphi_k}(A) < W(\bary{\nu})=W(F_0)$ for sufficiently large $k$ if $\nu$ satisfies \eqref{eq:morreyYoungMeasure}.

Finally, we can rephrase (III) by employing the notion of a \emph{laminate}. In the planar case, a laminate can be characterized as a probability measure $\nu$ on $\R^{2\times2}$ such that the Jensen-type inequality
\begin{equation}\label{eq:jensenTypeInequality}
	W(\bary{\nu}) \leq \int_{\R^{2\times2}} W(A)\,\dnu(A)
\end{equation}
holds for any rank-one convex function $W\col\R^{2\times2}\to\R$ \cite{pedregal1993laminates}. Since $\nu$ is a homogeneous gradient Young measure if and only if \eqref{eq:jensenTypeInequality} holds for any quasiconvex energy $W$ according to the Kinderlehrer-Pedregal Theorem \cite{kinderlehrer1991characterizations,kinderlehrer1994gradient}, the following problem is equivalent to (III):
\begin{itemize}
	\item[(IV)]
		Find a homogeneous gradient Young measure $\nu$ which is not a laminate.
\end{itemize}
Note that (IV) seems to make no reference to any energy function $W$. In practice, however, (approximately) rank-one convex energy functions need to be applied to a given measure $\nu$ in order to numerically establish whether it is a laminate \cite{guerra2020numerical}. On the other hand, it is often obvious by construction that $\nu$ is a homogeneous gradient Young measure. In particular, due to Lemma \ref{lemma:averaging}, this is the case if $\nu$ is obtained as the pushforward measure with respect to the gradient of a mapping $\varphi\col\Omega\to\R^2$.
%
%
%
%%%%%%%%%%%%%%%%%%%%%%%%%%%%%%%%%%%%%%%%%%%%%%%%%%%%%%%%%
\subsection{The numerical search for non-laminate gradient Young measures}

A specific numerical method for finding non-laminate homogeneous gradient Young measures numerically has been suggested by Guerra et al.\ \cite{guerra2020numerical}. This approach is based on selecting a dense subset $K(\Omega)\subset\sob1\infty(\Omega;\R^2)$ such that each $\varphi\in K(\Omega)$ induces a discrete-valued gradient field $\grad\varphi$. Then the following problem needs to be solved numerically:
\begin{itemize}
	\item[(V)]
		Find $\varphi\in K(\Omega)$ such that $\nu_{\varphi}$ with $\nu_{\varphi}(M)=\meas{\{x\in\Omega\setvert \grad\varphi(x)\in M\}}$ is not a laminate, where $K(\Omega)=\bigcup_{N\in\N}K_N$ with
		\begin{align}
			K_N&=\Bigl\{\varphi\in C(\Omega) \setvert \varphi(x)=\sum_{i=1}^N h(\iprod{x,\eta_i}+c_i)\.\xi_i\,,\; \eta_i,\xi_i\in\R^2\,,\; c_i\in\R\Bigr\}\,,\label{eq:guerraKN}\\
			h(t)&=t\.\chi_{[0,\afrac12]}(t) + (1-t)\.\chi_{[\afrac12,1]}(t)\notag
			\,.
		\end{align}
\end{itemize}
Both the barycenter $\bary{\nu}_\varphi$ and the energy value $\int_{\R^{2\times2}} W(A)\,\intd\nu(A)$ can be easily computed numerically for the measure $\nu_{\varphi}$ corresponding to any $\varphi\in K(\Omega)$ and a given energy function $W$ on $\R^{2\times2}$. However, in order to demonstrate that a given measure $\nu_{\varphi}$ is not a laminate, it is necessary to find a rank-one convex energy $W$ for which \eqref{eq:jensenTypeInequality} is violated. The numerical approach suggested by Guerra et al.\ \cite{guerra2020numerical} consists of generating a large number of (approximately) rank-one convex energy functions by computing the rank-one convex envelope for a specific class of functions based on earlier considerations by {\v{S}}ver{\'a}k \cite{sverak1992rank}.
%
%
%
%%%%%%%%%%%%%%%%%%%%%%%%%%%%%%%%%%%%%%%%%%%%%%%%%%%%%%%%%

\subsubsection{Application to $\Wmp$}
In contrast, based on our results in \cite{agn_voss2021morrey}, we conjecture that the functional $\Wmp$ defined in \ref{eq:WmpinK} is a good candidate energy to detect non-laminate measures. Therefore, we do not need to perform the computationally expensive calculations of multiple rank-one convex envelopes. Instead, we directly include the class $K(\Omega)$ of deformations in our search for a counterexample to the quasiconvexity inequality \eqref{eq:quasiconvexity}. Since the energy values of the inhomogeneous deformations $\varphi\in K(\Omega)$ and of the corresponding homogeneous deformations given as the energy at the barycenter $W(F_0)=W(\bary{\nu_{\varphi}})$ are easily computed numerically (without the need of FEM), we can thereby test $\Wmp$ with a large number of additional deformations with periodic boundary conditions. We call these inhomogeneous deformations $\varphi\in K(\Omega)$ \emph{combined laminates}.

\begin{figure}[h!]
	\centering
	\includegraphics[width=.49\textwidth,trim = 2.3cm 2.6cm 2.5cm 3cm, clip]{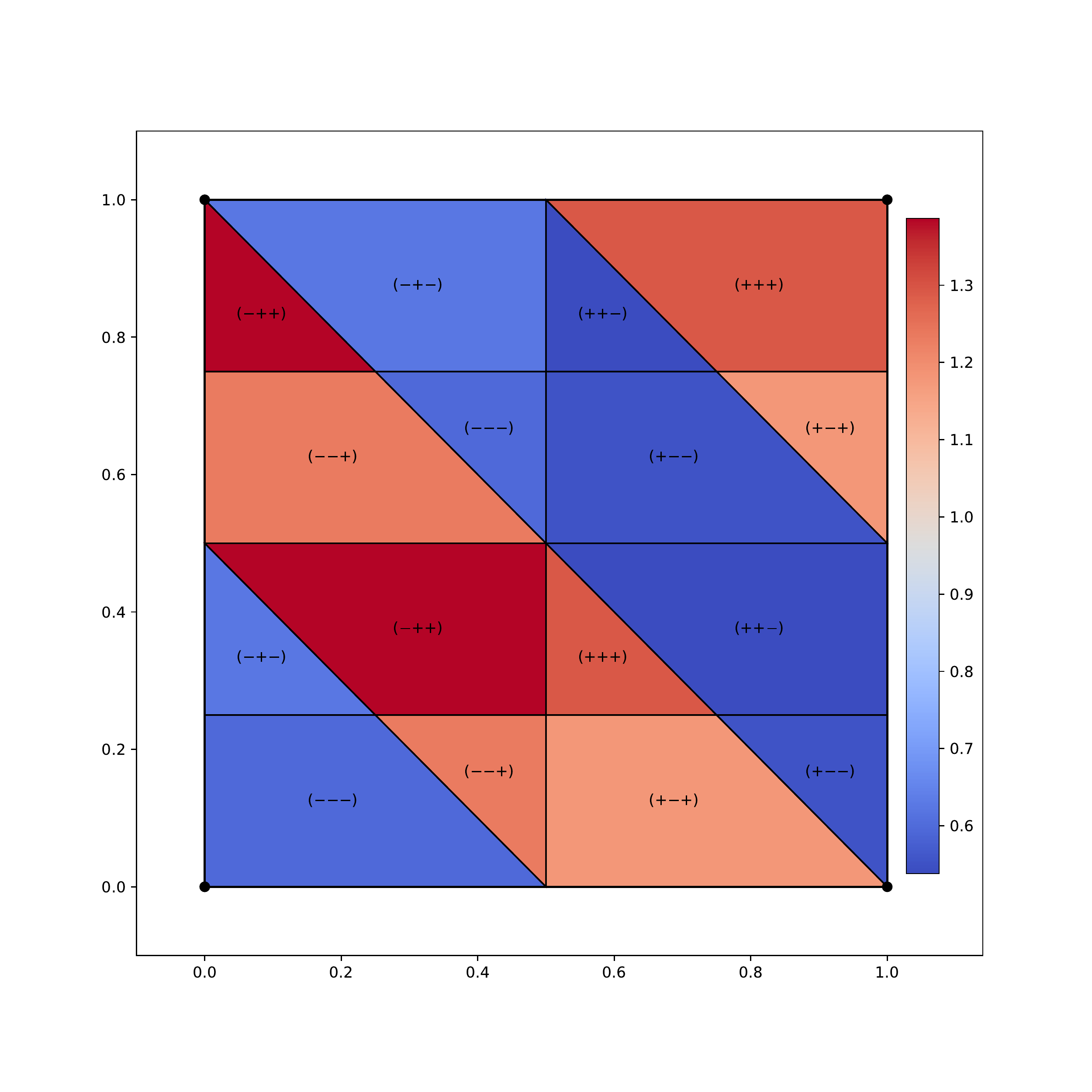}
  	\hfill
	\includegraphics[width=.49\textwidth,trim = 2.3cm 2.6cm 2.5cm 3cm, clip]{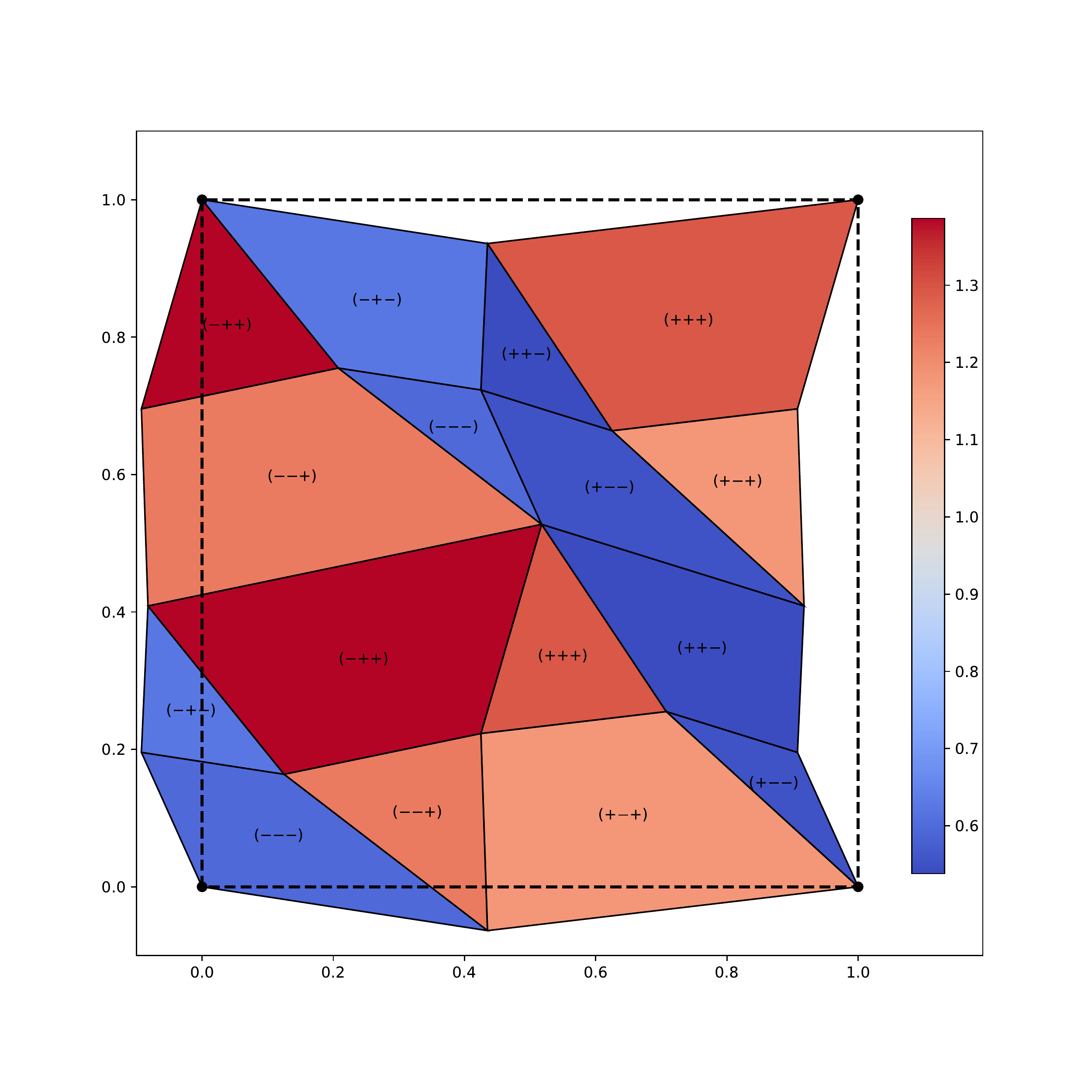}
	\caption{\label{fig:combinedLaminatesSimple}A possible combined laminate consisting of three parts: reference configuration (left) and deformed configuration (right). The deformation is homogeneous for each area with the coloring showing the corresponding value of $\det F$.}
\end{figure}

For a given homogeneous deformation gradient $F_0$ and given number $N$ of laminates to combine, cf.~\eqref{eq:guerraKN}, our implementation selects random parameter values for $\eta_i,\xi_i,c_i$ with $i\in\{1,\cdots,N\}$, validating that $\det F>0$ for all such combinations, and computes the energy value of the resulting combined laminate. Figures \ref{fig:combinedLaminatesSimple} and \ref{fig:combinedLaminates} show examples of such deformations with $N=3$ and $F_0=\id$; the \enquote{phases} of the superimposed deformations, i.e.\ the local values of $h'(\iprod{x,\eta_i}+c_i)$, are indicated by $(+/-)$.

\begin{figure}[h!]
	\centering
	\includegraphics[width=.49\textwidth,trim = 2.3cm 2.6cm 2.5cm 3cm, clip]{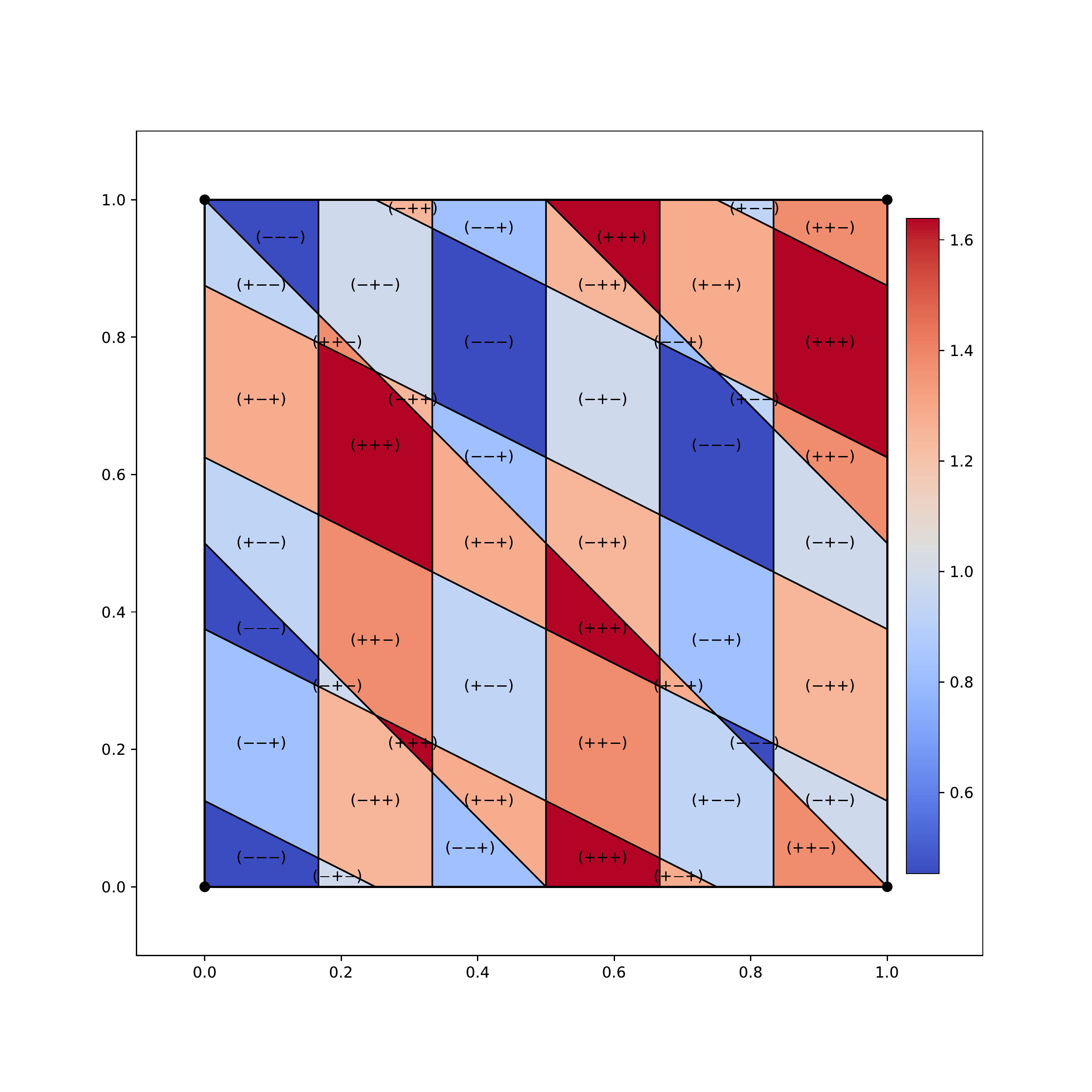}
  	\hfill
	\includegraphics[width=.49\textwidth,trim = 2.3cm 2.6cm 2.5cm 3cm, clip]{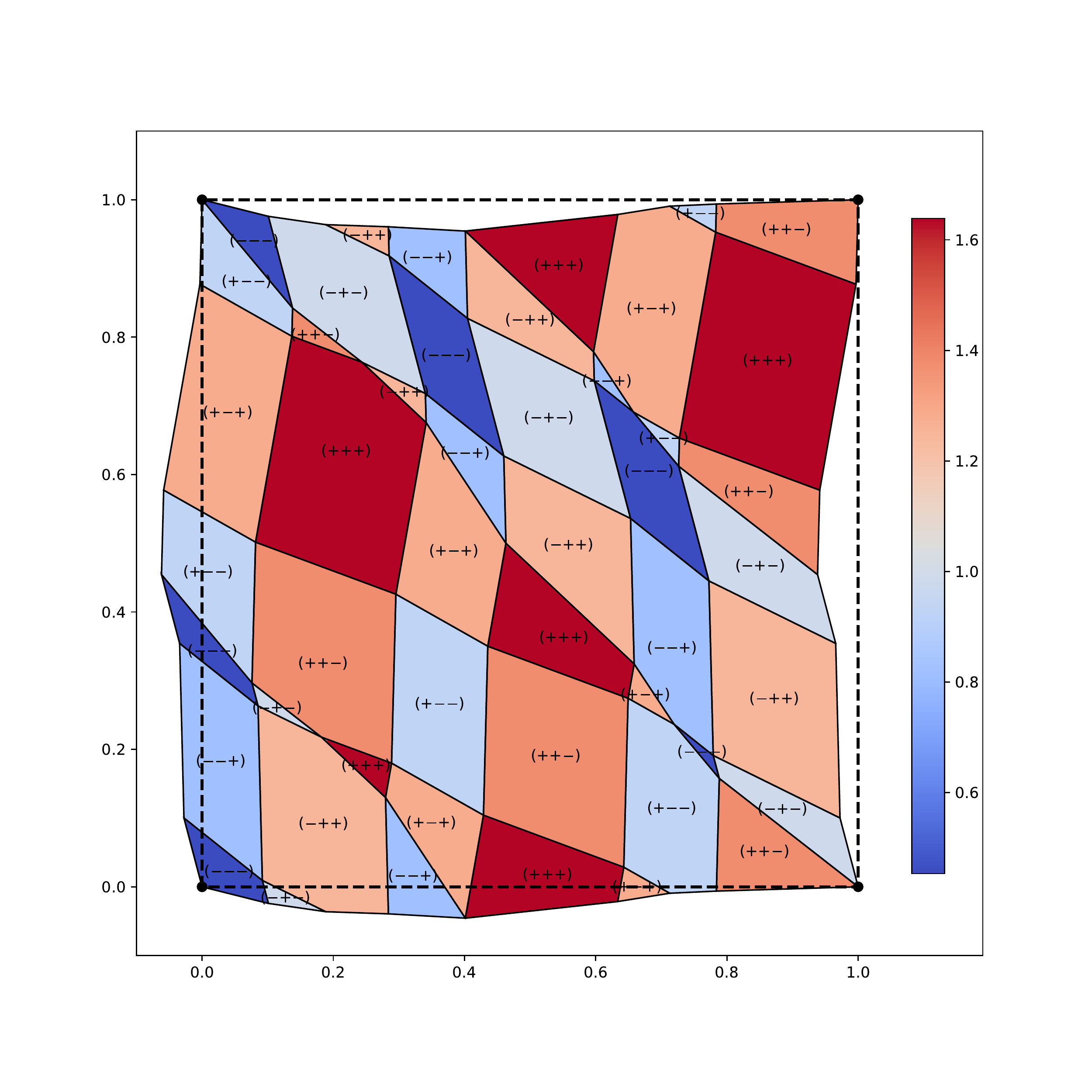}
	\caption{\label{fig:combinedLaminates}A possible combined laminate consisting of three parts of the reference configuration (left) and the deformed configuration (right). The deformation is homogeneous for each area with the coloring showing the corresponding value of $\det F$.}
\end{figure}

Our numerical tests focused on the case $N\geq5$.\footnote{Configurations with $N>3$ are usually too convoluted to properly visualize, especially with the addition $(+/-)$ to indicate the \enquote{phases}.} After testing slightly more than one million combinations with a routine written in Python, where we started with different $F_0=\diag(\sqrt{a},\frac{1}{\sqrt{a}})$ with $a\in[1,10]$ and tried $N=4,5,6,7$, we were once more unable to obtain an energy level below $\Wmp(F_0)$. As with our previous approaches, we always find non-trivial microstructures when we change our energy function to be non-rank-one convex. Figure \ref{fig:combinedLaminatesLower}, for example, shows such a microstructure for the energy $W_c$ defined in \eqref{eq:Wc} with a modified volumetric part $c\.\log\det F$ in place of $\Wmp$; for $c>1$, this method once more finds random configurations with lower energy than the homogeneous state.

\begin{figure}[h!]
	\centering
	\includegraphics[width=.49\textwidth,trim = 2.3cm 2.6cm 2.5cm 3cm, clip]{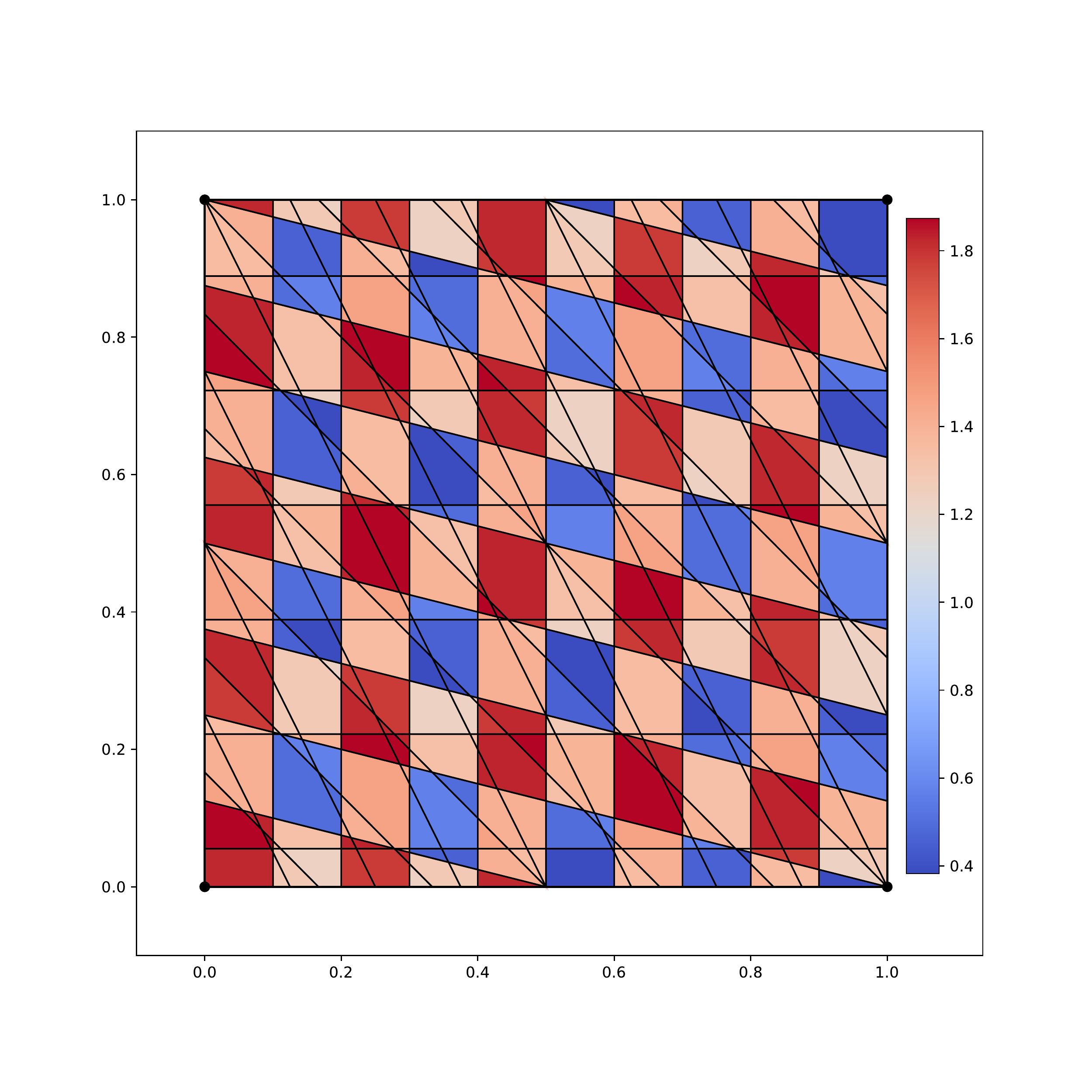}
	\hfill
	\includegraphics[width=.49\textwidth,trim = 2.3cm 2.6cm 2.5cm 3cm, clip]{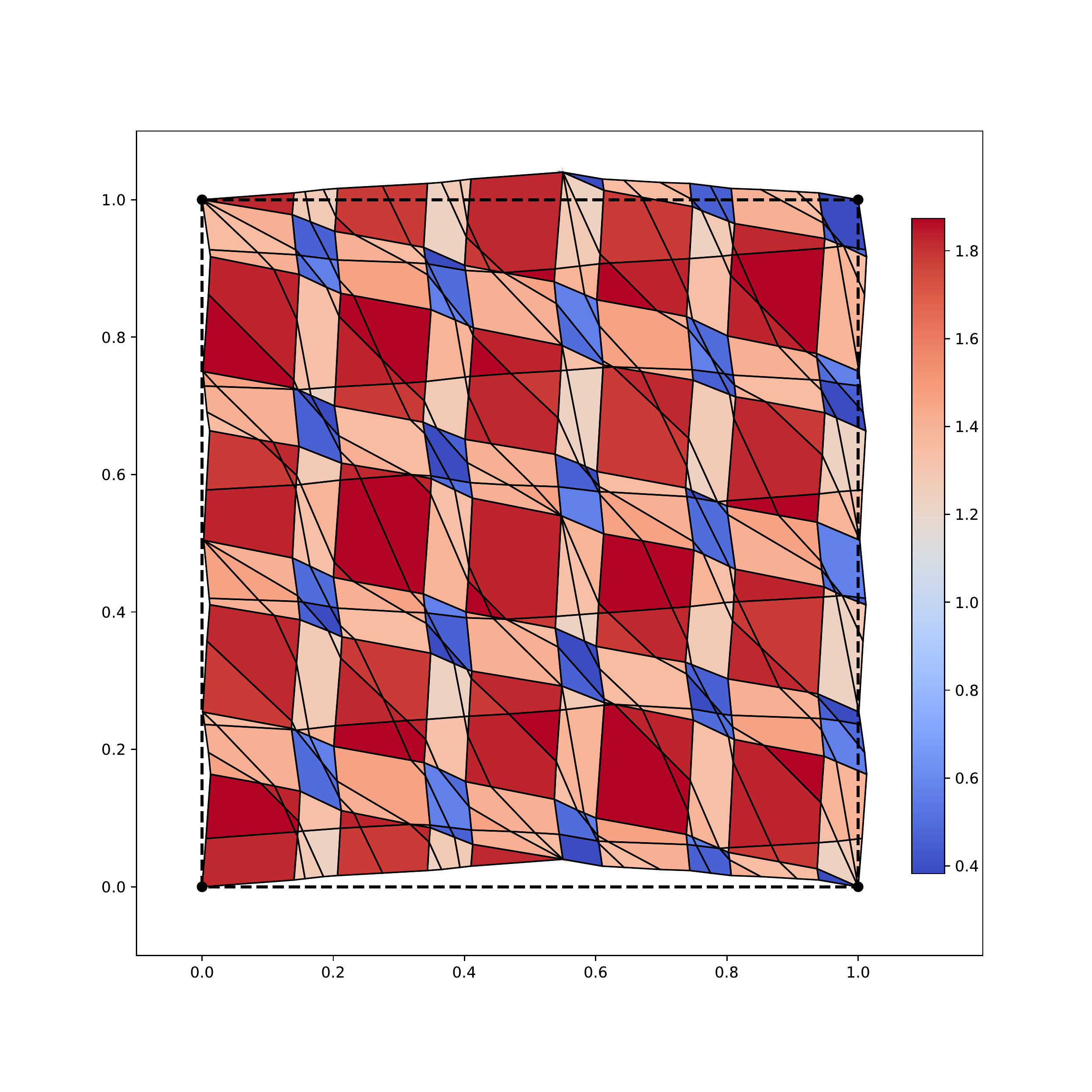}
	\caption{\label{fig:combinedLaminatesLower}A combined laminate consisting of five parts of the reference configuration (left) and the deformed configuration (right). The deformation is homogeneous for each area with the coloring showing the corresponding value of $\det F$. The energy value is lower than the homogeneous one for the non-elliptic energy density $W_c$ defined in \eqref{eq:Wc} with $c=1.5$.}
\end{figure}
%
%
%
%
%%%%%%%%%%%%%%%%%%%%%%%%%%%%%%%%%%%%%%%%%%%%%%%%%%%%%%%%%%%%%%%%%
\section{Discussion}

We have presented several different numerical approaches to check for quasiconvexity of a given function $W$.
\begin{itemize}
	\item In Section \ref{sec:numericOliver}, we demonstrated a classical finite element approach that can find easily microstructures if we perturb the energy candidate to be slightly non-rank-one convex. In addition, we showed a method of disturbing the homogeneous structure of the solution by modifying the energy values on a subdomain and computing the microstructure resulting from this material inhomogeneity.
	\item In Section \ref{sec:periodic}, under the assumption of periodic boundary conditions, we introduced a numerical scheme that is based on deep neural networks and thereby discovered a new non-trivial microstructure (smooth laminates) with the same energy value as the homogeneous deformation for the considered energy function, which we then investigated analytically.
	\item For the relaxation technique considered in Section \ref{sec:Curl}, we extended our numerical calculations from gradient fields $F$ to general matrix field $P$ by introducing a penalty term based on $\Curl P$. This approach resulted in various non-compatible fields with lower energy value than the energy of the homogeneous deformation, even for a quasiconvex energy candidate. The fields found this way were then used as new starting configurations for the finite elements approach. This, unfortunately, did never lead to energies below the one of the homogeneous state.
	\item In Section \ref{sec:guerra}, we discussed an alternative numerically straightforward way of checking for quasiconvexity connected to the theory of gradient Young measures rather than to optimization. Several rank-one laminations were combined so that the resulting deformation remained piece-wise homogeneous and their energy value were compared to the homogeneous deformation. Again, we only found lower values for a non-rank-one convex energy density.
\end{itemize}
We tested all these methods with the energy $\Wmp$ from \cite{agn_voss2021morrey}. If we change this energy to be non-elliptic and thus non-quasiconvex, the presented methods were all able to produce non-trivial microstructures, i.e.\ deformations that are neither homogeneous nor a simple first-order laminate. This demonstrates their viability as numerical tests for quasiconvexity. On the other hand, while non-quasiconvexity of an energy $W$ can conceivably be proven by numerical methods (since identifying a single suitable deformation would be sufficient), no amount of numerical testing can be considered an actual proof that a given function is quasiconvex. However, if all the described methods fail to yield a deformation energetically more optimal than the homogeneous one -- as was the case for the energy $\Wmp(F)=\frac{\lambdamax}{\lambdamin}+2\.\log\lambdamin$ considered here -- then this should be interpreted as a strong indication that the function is indeed quasiconvex and therefore not a viable candidate for answering Morrey's conjecture.
%
%
%
%
%
%%%%%%%%%%%%%%%%%%%%%%%%%%%%%%%%%%%%%%%%%%%%%%%%%%%%%%%%%%%%%%%%%
\section{References}
\footnotesize
\printbibliography[heading=none]
%
%
%
%
%
%%%%%%%%%%%%%%%%%%%%%%%%%%%%%%%%%%%%%%%%%%%%%%%%%%%%%%%%%%%%%%%%%
\small
\begin{appendix}
%
%
%
%
%
%%%%%%%%%%%%%%%%%%%%%%%%%%%%%%%%%%
\section{Contracting deformations}\label{sec:RadialEulerLagrange}

In a previous article \cite{agn_voss2021morrey}, we found a surprising connection of $\Wmp$ to the work of Burkholder and Iwaniec \cite{burkholder1988sharp,astala2012quasiconformal,guerra2021automatic} in the field of complex analysis with the so-called Burkholder function\footnote{The question whether the Burkholder function is quasiconvex or not is discussed extensively in the literature with numerical evidence in favor of the former \cite{baernstein2011some}. Furthermore, Guerra and Kristensen give a new interesting discussion about $B_p$ being extremal in the class of p-homogeneous functions \cite{guerra2021automatic}.}
\begin{align}
	B_p(F)=-\left[\frac{p}{2}\.\det F+\left(1-\frac{p}{2}\right)\opnorm{F}^2\right]\opnorm{F}^{p-2}\,,
\end{align}
where $\opnorm{F}\colonequals\sup_{\norm{\xi}=1}\norm{F\.\xi}_{\R^2}=\lmax$ denotes the operator norm (i.e.\ the largest singular value) of $F$. There, we showed the existence of a first non-trivial example for the energy $\Wmp(F)$ for which equality with the homogeneous solution holds. We considered the radial symmetric deformation, i.e.\ functions which can be described by a mapping $v\col[0,\infty)\to\R$ such that
\begin{equation}
	\varphi(x)=v(\norm x)\.\frac{x}{\norm x}\qquad\text{with}\quad v(0)=0\,,\label{eq:radialMappings}
\end{equation}
and its subclasses of expanding and contracting deformations
\begin{align}
	\mathcal{V}_R\colonequals&\left\{v\in C^1([0,R])\,\big|\,v(0)=0\,,\;v(R)=R\,,\; \frac{v(r)}{r}\geq v'(r)\geq 0\quad\forall\; r\in[0,R]\right\},\tag{expanding}\\
	\mathcal{V}_R\inv\colonequals&\left\{v\in C^1([0,R])\,\big|\,v(0)=0\,,\;v(R)=R\,,\; v'(r)\geq \frac{v(r)}{r}\geq 0\quad\forall\; r\in[0,R]\right\}.\tag{contracting}
\end{align}
In addition to the proof given in \cite{agn_voss2021morrey}, we can verify that these deformations are equilibrium states of $B_p$ and $\Wmp$ by directly computing the  Euler-Lagrange equations with restriction to radial deformations. It is important to note that the Euler-Lagrange equations given in this section are only a necessary but not a sufficient condition to check for stationary points because they assume the restriction to radial deformations as an a priori constraint. For the full system of Euler-Lagrange equations, we have to compute the divergence of the full first Piola-Kirchhoff tensor $S_1(F)=DW(F)$ \eqref{eq:WmpS1} and insert radial deformations as a posteriori constraint afterward which would go beyond the scope here but has been verified with the support of Mathematica.

We start with the Burkholder functional in the class of expanding deformations $v\in\mathcal V_R$
\begin{equation}
	\int_{B_R(0)}B_p(F)\,\dx=\pi\.\int_0^R \underbrace{p\.\frac{v'(r)\.v(r)^{p-1}}{r^{p-2}}+(2-p)\.\frac{v(r)^p}{r^{p-1}}}_{\equalscolon\mathcal F(v,v',r)}\,\dr\,,
\end{equation}
and compute the corresponding Euler-Lagrange equation
\begin{align}
	\ddr\mathcal F_{v'}=\mathcal F_v\qquad&\iff&\ddr\left[p\.\frac{v^{p-1}}{r^{p-2}}\right]&=p(p-1)\.\frac{v'\.v^{p-2}}{r^{p-2}}+(2-p)p\.\frac{v^{p-1}}{r^{p-1}}\\
	&\iff& (p-1)\.\frac{v'\.v^{p-2}}{r^{p-2}}-(p-2)\frac{v^{p-1}}{r^{p-1}}&=(p-1)\.\frac{v'\.v^{p-2}}{r^{p-2}}+(2-p)\.\frac{v^{p-1}}{r^{p-1}}\,,\notag
\end{align}
which is always true. Thus any $v\in\mathcal V_R$ is an equilibrium with respect to the restricted class of radial deformations. Now we do the same computation for the energy density $\Wmp$ in the opposite class of contracting deformations $\mathcal V_R\inv$
\begin{equation}
	\int_{B_R(0)}\Wmp(F)\,\dx=2\pi\int_0^R\underbrace{r^2\.\frac{v'(r)}{v(r)}+2r\left(\log v(r)-\log r\right)}_{\equalscolon\mathcal F(v,v',r)}\,\dr
\end{equation}
and compute the corresponding Euler-Lagrange equation
\begin{align}
	\phantom{.}\hspace{2cm}\ddr\mathcal F_{v'}=\mathcal F_v\qquad&\iff&\ddr\left[\frac{r^2}{v}\right]&=-r^2\.\frac{v'}{v^2}+\frac{2\.r}{v}\hspace{3cm}\phantom{.}\\
	&\iff&\frac{2\.r}{v}-r^2\.\frac{v'}{v^2}&=-r^2\.\frac{v'}{v^2}+\frac{2\.r}{v}\,,\notag
\end{align}
which is always true. Thus any $v\in\mathcal V_R\inv$ is an equilibrium with respect to the restricted class of radial deformations.
%
%
%
%%%%%%%%%%%%%%%%%%%%%%%%%
\section{Curl: Euler-Lagrange equations}\label{sec:appendixCurl}

In the three-dimensional case, we have
\begin{align}
	\curl\matr{v_1\\v_2\\v_3}=\nabla\times\matr{v_1\\v_2\\v_3}=\matr{v_{3,y}-v_{2,z}\\v_{1,z}-v_{3,x}\\v_{2,x}-v_{1,y}}\qquad\implies\qquad\curl\matr{v_1(x,y)\\v_2(x,y)\\0}=\matr{0\\0\\v_{2,x}-v_{1,y}}.
\end{align}
Thus in the planar case we may introduce the operator
\begin{equation}
	\curld\col\R^2\to\R\,,\qquad\curld(v_1,v_2)\colonequals v_{2,x}-v_{1,y}\,.\label{eq:Curl2D}
\end{equation} 
In addition, in three dimensions, taking $\curl$ row-wise, it holds
\begin{align*}
	\Curl\matr{P_{11}&P_{12}&P_{13}\\P_{21}&P_{22}&P_{23}\\P_{31}&P_{32}&P_{33}}=\matr{\curl(P_{11},P_{12},P_{13})^T\\\curl(P_{21},P_{22},P_{23})^T\\\curl(P_{31},P_{32},P_{33})^T}\qquad\implies\qquad\Curl\matr{P_{11}&P_{12}&0\\P_{21}&P_{22}&0\\0&0&0}=\matr{\curl(P_{11},P_{12},0)^T\\\curl(P_{21},P_{22},0)^T\\0\quad 0\quad 0}.
\end{align*}
Accordingly, in the planar case we may define the operator
\begin{align}
	\Curld\col\R^{2\times 2}\to\R^2\,,\qquad\Curld\matr{P_{11}&P_{12}\\P_{21}&P_{22}}\colonequals\matr{\curld(P_{11},P_{12})\\\curld(P_{21},P_{22})}=\matr{P_{12,x}-P_{11,y}\\P_{22,x}-P_{21,y}}.
\end{align}
In order to mimic the behavior of the curl in the three-dimensional case ($\Curl\Curl\col\R^{3\times 3}\to\R^{3\times 3}$) we define moreover the operator
\begin{align}
	\CCurl\col\R^{2\times 2}\to\R^{2\times 2}\,,\qquad\CCurl\matr{P_{11}&P_{12}\\P_{21}&P_{22}}\colonequals\matr{P_{12,xy}-P_{11,yy}&P_{11,xy}-P_{12,xx}\\P_{22,xy}-P_{21,yy}&P_{21,xy}-P_{22,xx}},
\end{align}
which is the relevant $2\times 2$ block of the three-dimensional expression
\begin{align}
	\Curl\Curl\matr{P_{11}&P_{12}&0\\P_{21}&P_{22}&0\\0&0&0}=\Curl\matr{0&0&P_{12,x}-P_{11,y}\\0&0&P_{22,x}-P_{221,y}\\0&0&0}=\matr{P_{12,xy}-P_{11,yy}&P_{11,xy}-P_{12,xx}&0\\P_{22,xy}-P_{21,yy}&P_{21,xy}-P_{22,xx}&0\\0&0&0}\notag
\end{align}
and ensures the common condition $\Div\CCurl(P)=0$. We calculate the weak form of the Euler-Lagrange equations
\begin{align}
	\left.\ddt I_1(\varphi+t\.\vartheta)\right|_{t=0}&=\int_\Omega\left.\ddt \Wmp(\nabla\varphi+t\.\nabla\vartheta)\right|_{t=0}\dx=\int_\Omega\iprod{D\Wmp(\nabla\varphi),\nabla\vartheta}\,\dx\\
	&\overset{\mathclap{P.I.}}{=}\;\int_\Omega -\iprod{\Div D\Wmp(\nabla\varphi),\vartheta}\,\dx=0\qquad\text{for all }\vartheta\in C_0^\infty(\Omega)\,,\notag\\
	\left.\ddt I_2(P+t\.\delta P)\right|_{t=0}&=\int_\Omega\left[\ddt \Wmp(P+t\.\delta P)+\frac{1}{2}L_c^2\.\norm{\Curld(P+t\.\delta P)}^2\right]_{t=0}\dx\notag\\
	&=\int_\Omega\iprod{D\Wmp(P),\delta P}+L_c^2\iprod{\Curld P,\Curld\delta P}\,\dx\label{eq:curlI2variation}\\
	&\overset{\mathclap{P.I.}}{=}\int_\Omega\iprod{D\Wmp(P)+L_c^2\.\CCurl P,\delta P}\,\dx=0\qquad\text{for all }\delta P\in C_0^\infty(\R^2;\R^{2\times 2})\,,\notag
\end{align}
where we used that $\Curl$ is a linear operator. With the fundamental lemma of the calculus of variations we obtain the strong form of the Euler-Lagrange equations for the respective cases
\begin{align}
	\Div D\Wmp(\nabla\varphi)&=0\qquad\text{(compatible)}\,,\\
	D\Wmp(P)+L_c^2\.\CCurl P&=0\qquad\text{(incompatible)}\,.\label{eq:curlStrongForm}
\end{align}
Because of $\Div\CCurl(P)=0$ the Euler-Lagrange equation \eqref{eq:curlStrongForm} always implies
\begin{equation}
	\Div D\Wmp(P)=0\,,
\end{equation}
where
\[
	\Div\matr{P_{11}&\cdots&P_{1n}\\\vdots&&\vdots\\P_{n1}&\cdots&P_{nn}}=\matr{\div(P_{11},\cdots,P_{1n})\\\vdots\\\div(P_{n1},\cdots,P_{nn})}
\]
denotes the row-wise divergence of $P\in C^1(\Rn;\Rnn)$.

Since in the planar case, $\curl$ can be seen as a rotated $\div$, i.e.
\begin{equation}
	\curld(v_1,v_2)=\div(v_2,-v_1)=\div\left[(v_1,v_2)\.\matr{0&-1\\1&0}\right]=\div\left[\matr{0&1\\-1&0}\matr{v_1\\v_2}\right]
\end{equation}
we have\footnote{Note that we use $\div(v_1,\cdots,v_n)=\div\matr{v_1\\\vdots\\v_n}$ and $\curl(v_1,\cdots,v_n)=\curl\matr{v_1\\\vdots\\v_n}$ for both row and column vectors.}
\begin{align}
	\Curld\matr{P_{11}&P_{12}\\P_{21}&P_{22}}=\matr{P_{12,x}-P_{11,y}\\P_{22,x}-P_{21,y}}&=\Div\matr{P_{12}&-P_{11}\\P_{22}&-P_{21}}=\Div\left[\matr{P_{11}&P_{12}\\P_{21}&P_{22}}\matr{0&-1\\1&0}\right]\notag\\
	\iff\qquad\Curld P&=\Div[P\.Q]\qquad\text{with }Q=\matr{0&-1\\1&0}.
\end{align}
Thus looking\footnote{You arrive at the same expression \eqref{eq:CCurlDiv} by computing $\left(\nabla\Div[P\.Q]\right)Q^T$ directly.} at equation \eqref{eq:curlI2variation}
\begin{align}
	\int_\Omega\iprod{\Curl P,\Curl\delta P}\,\dx&=\int_\Omega\iprod{\Div[P\.Q],\Div[\delta P\.Q]}\,\dx\;\overset{\mathclap{P.I.}}{=}\;-\int_\Omega\iprod{\nabla\Div[P\.Q],\delta P\.Q}\,\dx\notag\\
	\implies\qquad \CCurl P&=-\left(\nabla\Div[P\.Q]\right)Q^T\,,\qquad\text{with}\qquad Q=\matr{0&-1\\1&0}.\label{eq:CCurlDiv}
\end{align}
Therefore,
\begin{equation*}
	\left.\ddt I_2(P+t\.\delta P)\right|_{t=0}=\int_\Omega\iprod{D\Wmp(P)-L_c^2\left(\nabla\Div[P\.Q]\right)Q^T,\delta P}\,\dx\,,
\end{equation*}
which corresponds to the Euler-Lagrange equation in strong $\nabla\Div$-form
\begin{equation}
	D\Wmp(P)-L_c^2\left(\nabla\Div[P\.Q]\right)Q^T=0
\end{equation}
which is equivalent to \eqref{eq:curlStrongForm}. The corresponding minimization problem \eqref{eq:curlI2} can hence be rewritten using $\Curld P=\Div[P\.Q]$ as
\begin{equation}
	I_2(P)=\int_\Omega\Wmp(P)+\frac{L_c^2}{2}\.\norm{\Div[P\.Q]}^2\,\dx\to\min.\qquad P.\tau|_{\partial\Omega}=F_0.\tau\,,
\end{equation}
We may now substitute $\Phat\colonequals P\.Q\iff P=\Phat\.Q^T$. Then it holds
\begin{align}
	D\Wmp(\Phat\.Q^T)&=S_1(\Phat\.Q^T)=\sigma(\Phat\.Q^T)\Cof(\Phat\.Q^T)\overset{\star}{=}\sigma(\Phat)\Cof(\Phat\.Q^T)\notag\\
	&=\sigma(\Phat)\.\Cof(\Phat)\.Q^T=S_1(\Phat)\.Q^T=D\Wmp(\Phat)\.Q^T,
\end{align}
because of the isotropy $(\star)$ of $\Wmp$. Altogether,
\begin{align}
	\left.\ddt I_2(P+t\.\delta P)\right|_{t=0}&=\int_\Omega\iprod{D\Wmp(\Phat)\.Q^T-L_c^2\left(\nabla\Div\Phat\right)Q^T,\delta P}\,\dx\notag\\
	&=\int_\Omega\iprod{D\Wmp(\Phat)-L_c^2\.\nabla\Div\Phat,(\delta P)\.Q}\,\dx
\end{align}
which now corresponds to the Euler-Lagrange equation in strong $\nabla\Div$-form for $\Phat$
\begin{equation}
	D\Wmp(\Phat)-L_c^2\.\nabla\Div\Phat=0
\end{equation}
and the new minimization problem
\begin{equation}
	I_2^*(\Phat)\colonequals\int_\Omega\Wmp(\Phat)+\frac{L_c^2}{2}\.\norm{\Div\Phat}^2\,\dx\to\min\,,\qquad (\Phat\.Q^T).\tau|_{\partial\Omega}=F_0.\tau\,,
\end{equation}
with $\tau\in\R^2$ as the unit tangent vector to $\partial\Omega$. With $\tau=Q.\nu$ (since $\iprod{Q.\tau,\tau}=0$ in the planar case) we compute
\begin{equation*}
	(\Phat\.Q^T).\tau=F_0.\tau\qquad\iff\qquad(\Phat\.Q^T).(Q.\nu)=F_0.(Q.\nu)\qquad\iff\qquad\Phat.\nu=(F_0\.Q).\nu
\end{equation*}
and transform the tangential boundary conditions in the curl-formulation to the Neumann-boundary conditions for $\Phat$ in the div-formulation
\begin{equation}
	I_2^*(\Phat)\colonequals\int_\Omega\Wmp(\Phat)+\frac{L_c^2}{2}\.\norm{\Div\Phat}^2\,\dx\to\min\,,\qquad \Phat.\nu|_{\partial\Omega}=(F_0\.Q).\nu\,,
\end{equation}
with $\nu\in\R^2$ as the unit normal vector to $\partial\Omega$. 

For numerical experiments we may then also use the space
\[
	H(\Div;\Omega)=\left\{P\in L^2(\Omega)\;|\;\Div P\in L^2(\Omega)\right\}.
\]
Because of the involved nonlinear form of $\Wmp(P)$ it is, however, not clear whether or not $P\in L^2(\Omega)$ holds.\footnote{For the modified energy
\begin{equation}
	W_0(F)=\Wmp(F)+\frac{1}{\det F}=\K(F)+\sqrt{\K(F)^2-1}-\arcosh\K(F)+\log\det F+\frac{1}{\det F}\,,
\end{equation}
it is possible to achieve $W_0(F)\geq c^+\norm F^q$.}
\end{appendix}
\end{document}